\newcommand{\be}{\begin{eqnarray}}
\newcommand{\ee}{\end{eqnarray}}
\newcommand{\ce}{\begin{eqnarray*}}
\newcommand{\de}{\end{eqnarray*}}
\newtheorem{theorem}{Theorem}[section]
\newtheorem{lemma}[theorem]{Lemma}
\newtheorem{remark}[theorem]{Remark}
\newtheorem{definition}[theorem]{Definition}
\newtheorem{proposition}[theorem]{Proposition}
\newtheorem{corollary}[theorem]{Corollary}
\def\[{{\Big[}}
\def\]{{\Big]}}
\def\<{{\langle}}
\def\>{{\rangle}}
\def\({{\Big(}}
\def\){{\Big)}}
\def\bx{{\mathbf{x}}}
\def\bt{\begin{theorem}}
\def\et{\end{theorem}}
\def\bl{\begin{lemma}}
\def\el{\end{lemma}}
\def\br{\begin{remark}}
\def\er{\end{remark}}
\def\bx{\begin{Example}}
\def\ex{\end{Example}}
\def\bd{\begin{definition}}
\def\ed{\end{definition}}
\def\bp{\begin{proposition}}
\def\ep{\end{proposition}}
\def\bc{\begin{corollary}}
\def\ec{\end{corollary}}
\def\geq{\geqslant}
\def\leq{\leqslant}
\def\vph{\varphi}
\def\bx{{\bf x}}
\numberwithin{equation}{section}
\begin{document}

\title{Central limit theorems for the derivatives of self-intersection local time for $d$-dimensional Brownian motion}

\date{}
\author{Xiaoyan Xu${}^{1}$, Xianye Yu${}^{2,\ast}$}
\dedicatory{${}^1$School of Mathematics, Sun Yat-sen University\\
 Guangzhou, Guangdong 510275, P. R. China\\
 ${}^2$School of Statistics and Mathematics, Zhejiang Gongshang University\\
 Hangzhou, Zhejiang 310018, P. R. China\\
Email: X. Xu: xiaoyanxu15@163.com, X. Yu: xianyeyu@gmail.com}
\thanks{${}^\ast$Corresponding author}
\thanks{{\it {\rm 2010} Mathematics Subject Classification:}
 60H07, 60G15, 60F05.}
\thanks{{\it Key words and phrases:} Central limit theorem; Derivatives of self-intersection local time; Malliavin calculus; Wiener chaos expansion; Renormalization
 }

\date{}

\begin{abstract}
Let $\{B_t,t\geq0\}$ be a d-dimensional Brownian motion. We prove that the approximation of the higher derivative of renormalized self-intersection local time
$$
\alpha_{d}^{(|k|)}(\epsilon)-E[\alpha_{d}^{(|k|)}(\epsilon)]:=\int_{0}^{1}\int_{0}^{s}\left(p^{(|k|)}_{d,\epsilon}(B_{s}-B_{r})-E[p^{(|k|)}_{d,\epsilon}(B_{s}-B_{r})]\right)drds,
$$
where the multiindex $k=(k_{1},\cdots,k_{d})$, $
p_{d,\epsilon}^{(|k|)}(x_1,x_2,\cdots,x_d):=\partial^{k_1}_{x_1}\partial^{k_2}_{x_2}\cdots\partial^{k_d}_{x_d}p_{d,\epsilon}(x_1,x_2,\cdots,x_d)
$ and $p_{d,\epsilon}(x)=\frac{1}{(2\pi\epsilon)^{d/2}}e^{-\frac{|x|^{2}}{2\epsilon}}, x\in\mathbb{R}^d$, satisfies the central limit theorems when renormalized by $(\log\frac{1}{\epsilon})^{-1}$ in the case $d=2$, $|k|=1$ and by $\epsilon^{\frac{d+|k|-3}{2}}$ in the case $d\geq 3$, $|k|\geq 1$, which gives a complete answer to the conjecture of Markowsky [In S\'{e}minaire de Probabiliti\'{e}s \uppercase\expandafter{\romannumeral10\romannumeral50\romannumeral4} (2012) 141-148 Springer]. We as well prove that the m-th chaotic component of $\alpha_{d}^{(|k|)}(\epsilon)-E[\alpha_{d}^{(|k|)}(\epsilon)]$ satisfies the central limit theorems when renormalized by a multiplicative factor in different cases.
\end{abstract}


\maketitle

\def\px{\pi(x)}
\def\bard{\bar{D}}
\def\lg{\langle}
\def\rg{\rangle}
\def\py{\pi(y)}
\def\nphi{\nabla\vph}
\def\w{\wedge}
\def\v{\vee}
\def\ve{\varepsilon}
\section{Introduction and main results}
Rogers and Walsh \cite{Rogers} introduced the integral functional
\begin{align*}
A(t,B_{t}):=\int_{0}^{t}1_{[0,\infty)}(B_{t}-B_{r})dr
\end{align*}
where $\{B_{t},t\geq0\}$ is a 1-dimensional Brownian motion. Denote by $L(t,x)$ the local time of Brownian motion, namely
$$
L(t,x)=\int^t_0\delta(B_s-x)ds,
$$
where $\delta$ is the Dirac function. Using $\frac{d}{dx}1_{[0,\infty)}=\delta(x)$ and $\frac{d^2}{dx^2}1_{[0,\infty)}=\delta'(x)$, Rosen \cite{Rosen} formally wrote the following It\^{o} formula without proof
\begin{align*}
A(t,B_{t})-\int_{0}^{t}L(s,B_{s})dB_{s}=t+\frac{1}{2}\int_{0}^{t}\int_{0}^{s}\delta'(B_{s}-B_{r})drds,
\end{align*}
and defined
\begin{align*}
 \alpha^{(1)}_{1}(t,x):=-\int_{0}^{t}\int_{0}^{s}\delta'(B_{s}-B_{r}-x)drds.
\end{align*}
Thus it is called the derivative of self-intersection local time of Brownian motion. Let $p_{1,\epsilon}$ be an approximate Dirac delta function, namely
\begin{align*}
p_{1,\epsilon}(x)=\frac{1}{\sqrt{2\pi\epsilon}}e^{-\frac{x^{2}}{2\epsilon}},\quad x\in\mathbb{R}.
\end{align*}
Denote
\begin{align*}
\alpha^{(1)}_{1}(t,x,\epsilon):=-\int_{0}^{t}\int_{0}^{s}p_{1,\epsilon}'(B_{s}-B_{r}-x)drds.
\end{align*}
Rosen \cite{Rosen} firstly derived the existence of derivative of self-intersection local time of $1$-dimensional Brownian motion, and showed that $\alpha^{(1)}_{1}(t,x,\epsilon)$ converges to $\alpha^{(1)}_{1}(t,x)$ in $L^p$ for all $p\in(0,\infty)$ when $\epsilon$ approaches to $0$. Markowsky \cite{Markowsky} proved the Tanaka-like formula
\begin{align*}
-\frac{1}{2}&\int_{0}^{t}\int_{0}^{s}\delta'(B_{s}-B_{r}-x)drds+\frac{1}{2}sgn(x)t\\
&=\int_{0}^{t}L(s,B_{s}-x)dB_{s}-\frac{1}{2}\int_{0}^{t}sgn(B_{t}-B_{r}-x)dr,
\end{align*}
where
\begin{equation*}
sgn(x)= \left\{
\begin{array}{ll}
-1,&\quad {\text { $x<0$}},\\
0,&\quad {\text { $x=0$}},\\
1,&\quad {\text { $x>0$}}.
\end{array}
\right.
\end{equation*}
Furthermore, Markowsky \cite{Markowsky-08} showed that
$$
\frac{1}{\log(1/\epsilon)}\int_{0}^{1}\int_{0}^{s}p'_{2,\epsilon}(B_{s}-B_{r})drds
$$
converges in law to a normal law as $\epsilon\rightarrow0$, where $\{B_{t},t\geq0\}$ is a 2-dimensional Brownian motion and for $x=(x_{1},x_{2})\in\mathbb{R}^{2}$, $$p_{2,\epsilon}(x)=\frac{1}{2\pi\epsilon}e^{-\frac{|x|^{2}}{2\epsilon}},\ \ \ p'_{2,\epsilon}(x_1,x_2)=\frac{\partial}{\partial x_1}p_{2,\epsilon}(x_1,x_2).$$
For the fractional Brownian motion case, the similar results could be found in \cite{Markowsky-2022,Hu-Nualart-05,Jaramillo-Nualart,Jung-Markowsky,Jung-Markowsky-15}. Then one can also consider the higher derivatives of self-intersection local time. For the multiindex $k=(k_{1},\cdots,k_{d})$ and $|k|:=k_1+k_2+\cdots+k_d$, we denote
\begin{align}\label{01-7-8}
\alpha^{(|k|)}_d(\epsilon):=\int_{0}^{1}\int_{0}^{s}p_{d,\epsilon}^{(|k|)}(B_{s}-B_{r})drds,
\end{align}
where $\{B_t,t\geq0\}$ is a d-dimensional Brownian motion and for $x=(x_1,x_2,\cdots,x_d)\in\mathbb{R}^d$,
$$p_{d,\epsilon}(x)=\frac{1}{(2\pi\epsilon)^{d/2}}e^{-\frac{|x|^{2}}{2\epsilon}},\ \ \
p_{d,\epsilon}^{(|k|)}(x_1,x_2,\cdots,x_d):=\partial^{k_1}_{x_1}\partial^{k_2}_{x_2}\cdots\partial^{k_d}_{x_d}p_{d,\epsilon}(x_1,x_2,\cdots,x_d).
$$
In~\cite{Markowsky2012}, Markowsky conjectured that for $d=1,$ $ |k|=2$ and some constant $\gamma>0$,
$$
\frac{1}{\log(1/\epsilon)^\gamma}\left(\alpha^{(2)}_1(\epsilon)-E[\alpha^{(2)}_d(\epsilon)]\right)
$$
converges in law to a normal law as $\epsilon$ approaches to zero.
For d-dimensional Brownian motion and $|k|\geq1$, Markowsky conjectured that for $d+|k|\geq4$,
\begin{equation*}
\epsilon^{\frac{d+|k|-3}{2}}\left(\alpha^{(|k|)}_d(\epsilon)-E[\alpha^{(|k|)}_d(\epsilon)]\right)
\end{equation*}
converges in law to a normal law as $\epsilon$ approaches to zero. The aim of this paper is to consider the precise asymptotic behaviors of $\alpha^{(|k|)}_d(\epsilon)-E[\alpha^{(|k|)}_d(\epsilon)]$ and to answer the above two conjectures completely.

For $|k|=0$, the self-intersection local time has been studied by many authors. When $d=1$, it is clear that $\alpha^{(0)}_1(\epsilon)$ converge in $L^2$
when $\epsilon$ approaches to zero. Unfortunately, $\alpha^{(0)}_d(\epsilon)$ does not converge in $L^2$ for any $d\geq2$ as $\epsilon$ tends to zero. In 1969, in order to construct the Edwards polymer model, Varadhan~\cite{Varadhan} adopted the renormalization and proved that $\alpha^{(0)}_2(\epsilon)-E[\alpha^{(0)}_2(\epsilon)]$ converges in $L^2$ as $\epsilon$ tends to zero. For $d=3$, Yor~\cite{Yor} proved that
$$
\left(\frac{1}{\log(1/\epsilon)}\right)^{\frac{1}{2}}\left(\alpha^{(0)}_3(\epsilon)-E[\alpha^{(0)}_3(\epsilon)]\right)\xrightarrow{Law} \mathcal{N}(0,\sigma^2), \quad\text{ as }\  \epsilon\rightarrow 0,
$$
where $\sigma^2$ is a positive constant. Furthermore, Calais and Yor~\cite{ Calais-Yor} have proved that for $d\geq4$
$$
\epsilon^{\frac{d-3}{2}}\left(\alpha^{(0)}_d(\epsilon)-E[\alpha^{(0)}_d(\epsilon)]\right)
$$
converges in law to a normal distribution as $\epsilon$ tends to zero. Along this line, we obtain the following results.

\begin{theorem}\label{th1}
Let $\alpha^{(|k|)}_d(\epsilon)$ be defined in~\eqref{01-7-8}. We have:
\begin{itemize}
\item [(i)] If $d=1$ and $|k|=2$, then
\begin{equation*}
\lim\limits_{\epsilon\to 0}\left(\log\frac{1}{\epsilon}\right)^{-\gamma}E\left[\left(\alpha^{(|k|)}_d(\epsilon)-E[\alpha^{(|k|)}_d(\epsilon)]\right)^2\right]= \left\{
\begin{array}{ll}
\infty,&\quad {\text { $0<\gamma\le1$}},\\
0,&\quad {\text { $\gamma>1$}}.
\end{array}
\right.
\end{equation*}
\item [(ii)] If $d=2$ and $|k|=1$, then
\begin{equation*}
\lim\limits_{\epsilon\to 0}\left(\log\frac{1}{\epsilon}\right)^{-2}E\left[\left(\alpha^{(|k|)}_d(\epsilon)-E[\alpha^{(|k|)}_d(\epsilon)]\right)^2\right]= \frac{1}{4\pi^2}.
\end{equation*}
\item [(iii)] If $d=1\text{ or }d=2$, and $d+|k|\geq4$, then
\begin{equation*}
\lim\limits_{\epsilon\to 0}\epsilon^{\gamma}E\left[\left(\alpha^{(|k|)}_d(\epsilon)-E[\alpha^{(|k|)}_d(\epsilon)]\right)^2\right]= \left\{
\begin{array}{ll}
\infty,&\quad {\text { $0<\gamma\le d+|k|-3$}},\\
0,&\quad {\text { $\gamma>d+|k|-3$}}.
\end{array}
\right.
\end{equation*}
\item [(iv)] If $d\geq3$ and $|k|\geq1$, then
\begin{equation*}
\lim\limits_{\epsilon\to 0}\epsilon^{d+|k|-3}E\left[\left(\alpha^{(|k|)}_d(\epsilon)-E[\alpha^{(|k|)}_d(\epsilon)]\right)^2\right]=\sum_{m>\frac{|k|}{2}}\sigma_{2m-|k|,d}^{2},
\end{equation*}
where
\begin{align}\label{sigma-1}
	\sigma_{2m-|k|,d}^{2}=\beta_{2m-|k|,d}\phi(m,d,|k|),
	\end{align}
	\begin{align*}
	\beta_{2m-|k|,d}:=\sum_{\substack{m_{1}+\cdots+m_{d}=m\\m_{j}\ge\frac{k_{j}}{2},\ j=1,\cdots,d}}\frac{((2m_{1})!\cdots(2m_{d})!)^{2}}{(2\pi)^{d}2^{2m}(2m_{1}-k_{1})!\cdots(2m_{d}-k_{d})!(m_{1}!)^{2}\cdots(m_{d}!)^{2}},
\end{align*}
and
	\begin{align*}\label{eq1.3-5-9}
	 \phi(m,d,|k|):=\frac{2\Gamma(2m-|k|+1)\Gamma(d+|k|-3)}{\Gamma(2m+d-2)(m+\frac{d}{2}-1)}\left(\frac{1}{m+\frac{d}{2}-1}+\frac{1}{m+\frac{d}{2}-2}\right).
	\end{align*}
\end{itemize}
\end{theorem}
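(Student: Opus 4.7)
The plan is to carry out a Wiener chaos decomposition of $p_{d,\epsilon}^{(|k|)}(B_s-B_r)$, compute the $L^2$-norm of each chaotic component using the orthogonality of distinct chaoses, and then extract the leading asymptotic behaviour as $\epsilon\to 0$.

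\textbf{Step 1: Chaos expansion.} Starting from the Fourier representation
\begin{align*}
p_{d,\epsilon}^{(|k|)}(B_s-B_r)=\frac{1}{(2\pi)^d}\int_{\mathbb{R}^d}\prod_{j=1}^d(i\xi_j)^{k_j}\,e^{-\epsilon|\xi|^2/2}\,e^{i\xi\cdot(B_s-B_r)}\,d\xi,
\end{align*}
and the Hermite expansion of $e^{i\xi\cdot G}$ for a centred Gaussian vector $G$ of variance $(s-r)I_d$, I would obtain, coordinate by coordinate,
\begin{align*}
p_{d,\epsilon}^{(|k|)}(B_s-B_r)-E\bigl[p_{d,\epsilon}^{(|k|)}(B_s-B_r)\bigr]=\sum_{m>|k|/2}T_{2m-|k|}(s,r,\epsilon),
\end{align*}
where $T_n(s,r,\epsilon)$ lies in the $n$-th Wiener chaos. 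The parity constraint that restricts the chaos orders to those of the same parity as $|k|$ arises from $\int_{\mathbb{R}}(i\xi_j)^{k_j+n_j}e^{-\lambda\xi_j^2/2}d\xi_j=0$ whenever $k_j+n_j$ is odd.

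\textbf{Step 2: $L^2$-norm of each chaotic component.} Setting $J_n(\epsilon):=\int_0^1\int_0^s T_n(s,r,\epsilon)drds$ and using orthogonality of distinct chaoses,
\begin{align*}
E\left[\left(\alpha_d^{(|k|)}(\epsilon)-E[\alpha_d^{(|k|)}(\epsilon)]\right)^2\right]=\sum_{m>|k|/2}E\bigl[|J_{2m-|k|}(\epsilon)|^2\bigr].
\end{align*}
Disjoint time intervals yield independent Brownian increments, so $E[T_n(s,r,\epsilon)T_n(s',r',\epsilon)]$ vanishes unless $[r,s]\cap[r',s']\ne\emptyset$. For overlapping intervals, Mehler's formula reduces $E[|J_n(\epsilon)|^2]$ to a fourfold integral over the configuration $r\le r'\le s\le s'$ (up to symmetry) whose integrand involves the $n$-th power of the cross-correlation divided by $((s-r)(s'-r'))^{(d+|k|)/2}$, multiplied by an $\epsilon$-regularising Gaussian factor inherited from $e^{-\epsilon|\xi|^2/2}$.

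\textbf{Step 3: Scaling and case analysis.} For case (iv) I rescale the time differences by $\epsilon$ and let $\epsilon\downarrow 0$: the integrand converges pointwise, and after integration the product structure splits into a Gaussian integral yielding $\beta_{2m-|k|,d}$ (obtained by expanding $(\xi\cdot\eta)^{2m-|k|}$ via the multinomial theorem and pairing coordinate powers against $(k_1,\dots,k_d)$) and a time-integral factor evaluating to $\phi(m,d,|k|)$ through Beta-function identities. Case (ii) is the critical case $d+|k|=3$: the associated time integral diverges logarithmically and the prefactor $1/(4\pi^2)$ is recovered from the leading $\log(1/\epsilon)^2$ contribution of the $m=1$ chaos, higher chaoses being of lower order. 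Cases (i) and (iii) are one-sided: the lower bound (divergence below the critical exponent) comes from a single chaotic component bounded below; the upper bound (vanishing above the critical exponent) follows by combining the scaling of (iv)/(ii) with a summable dominant.

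\textbf{Main obstacle.} The delicate step in (iv) is interchanging $\lim_{\epsilon\to 0}$ with the infinite chaos sum: the Hermite coefficients grow factorially in $n$, and only the Cauchy--Schwarz-type bound $|\rho(s,r,s',r')|\le\sqrt{(s-r)(s'-r')}$ on the Brownian increment correlation offsets this growth when raised to the $n$-th power, producing a uniform summable dominant in the spirit of Calais--Yor \cite{Calais-Yor}. A secondary challenge is the explicit identification of the coefficients $\beta_{2m-|k|,d}$, which requires carefully enumerating the partitions $m_1+\cdots+m_d=m$ with $m_j\ge k_j/2$ that arise when distributing the Hermite chaos across the $d$ Brownian coordinates.
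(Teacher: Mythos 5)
Your proposal follows essentially the same route as the paper: a Wiener chaos expansion of $\alpha_d^{(|k|)}(\epsilon)$ obtained from the Fourier representation of $p_{d,\epsilon}^{(|k|)}$ (the paper's Lemma~\ref{sec2-lem2.1}), an exact fourfold time integral for each chaotic variance $V^{(2m-|k|,d)}(\epsilon)$, rescaling of the time increments by $\epsilon$ together with Beta/Gamma identities to extract $\beta_{2m-|k|,d}\,\phi(m,d,|k|)$ in case (iv) and the logarithmic blow-up of the $m=1$ chaos in case (ii), Fatou plus Stirling-type asymptotics of $\beta_{2m-|k|,d}$ for the divergence in (i) and (iii), and a summable uniform dominant to interchange $\lim_{\epsilon\to0}$ with the chaos sum when $d\ge3$. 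The obstacles you flag (the limit--sum interchange and the combinatorial identification of $\beta_{2m-|k|,d}$) are exactly the points the paper resolves via the monotone bound $\epsilon^{d+|k|-3}V^{(2m-|k|,d)}(\epsilon)\le\sigma^2_{2m-|k|,d}$ and Lemma~\ref{lem5.3-7-26}, so no genuinely different ideas are involved.
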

\begin{remark}\label{re1-3-13}
If $|k|=k_1+k_2+\cdots+k_d$ is an odd number, it is trivial that
$$\alpha^{(|k|)}_d(\epsilon)-E[\alpha^{(|k|)}_d(\epsilon)]=\alpha^{(|k|)}_d(\epsilon).$$
For simplicity, we still denote $|k|:=k$ for $d=1$. When $d=2$ and $|k|=1$, we let $k_1=1$ and $k_2=0$ for multiindex $k=(k_1,k_2)$.
\end{remark}

By the technique of Wiener chaos expansion (see \cite{Nualart,Markowsky2012}), we can write
\begin{equation}\label{chaos-expansion}
\alpha^{(|k|)}_d(\epsilon)-E[\alpha^{(|k|)}_d(\epsilon)]=\sum_{m>\frac{|k|}{2}}I_{2m-|k|}(f_{2m-|k|,d,\epsilon}),
\end{equation}
where $f_{2m-|k|,d,\epsilon}$ is defined in Section~\ref{sec2}. We describe the limit behaviors of chaotic component $I_{2m-|k|}(f_{2m-|k|,d,\epsilon})$ as follows.

\begin{theorem}\label{th1.3-7-26}
Let the chaotic components $I_{2m-|k|}(f_{2m-|k|,d,\epsilon})$ be defined in~\eqref{chaos-expansion}. We have:
\begin{itemize}
\item [(i)] If $d=1$ and $|k|=2$, then for $m\geq2$
\begin{equation*}
\left(\log\frac{1}{\epsilon}\right)^{-\frac{1}{2}}I_{2m-2}(f_{2m-2,1,\epsilon})\xrightarrow{Law} \mathcal{N}\left(0,\frac{(2m-4)!}{\pi2^{2m-7}((m-2)!)^2}\right), \quad \text{ as }\  \epsilon\rightarrow 0.
\end{equation*}
\item [(ii)] If $d=2$ and $|k|=1$, then for $m=1$
\begin{equation*}
\left(\log\frac{1}{\epsilon}\right)^{-1}I_{1}(f_{1,2,\epsilon})\xrightarrow{Law} \mathcal{N}\left(0,\frac{1}{4\pi^2}\right), \quad \text{ as }\  \epsilon\rightarrow 0,
\end{equation*}
and for $m\geq2$
\begin{equation*}
\left(\log\frac{1}{\epsilon}\right)^{-\frac{1}{2}}I_{2m-1}(f_{2m-1,2,\epsilon})\xrightarrow{Law} \mathcal{N}\left(0,\frac{2\beta_{2m-1,2}(2m-1)}{m^2(m-1)}\right), \quad \text{ as }\  \epsilon\rightarrow 0,
\end{equation*}
where
\begin{align*}
\beta_{2m-1,2}:=\sum_{\substack{m_{1}+m_{2}=m\\m_1\geq 1,\  m_2\geq0}}\frac{((2m_{1})!(2m_{2})!)^{2}}{(2\pi)^{2}2^{2m}(2m_{1}-1)!(2m_{2})!(m_{1}!)^{2}(m_{2}!)^{2}}.
\end{align*}	

\item [(iii)] If $d\geq1$, $|k|\geq1$ and $d+|k|\geq4$, then
\begin{equation*}
\epsilon^{\frac{d+|k|-3}{2}}I_{2m-|k|}(f_{2m-|k|,d,\epsilon})\xrightarrow{Law} \mathcal{N}(0,\widehat{\sigma}_{2m-|k|,d}^{2}), \quad \text{ as }\  \epsilon\rightarrow 0,
\end{equation*}
where
\begin{align*}
\widehat{\sigma}_{2m-|k|,d}^{2}= \left\{
\begin{array}{ll}
\frac{(2m-1)!(k-3)!}{2^{2m-3}\pi((m-1)!)^{2}}\left(\frac{1}{m-\frac{1}{2}}+\frac{1}{m-\frac{3}{2}}\right),&\quad {\text { $d=1$}},\\
\sigma_{2m-|k|,d}^{2},&\quad {\text { $d\geq2$}},
\end{array}
\right.
\end{align*}
and $\sigma_{2m-|k|,d}^{2}$ is defined in~\eqref{sigma-1}.

\end{itemize}
\end{theorem}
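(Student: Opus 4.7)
The plan is to apply the Fourth Moment Theorem of Nualart and Peccati, which reduces the CLT for a sequence $I_q(g_\epsilon)$ in a fixed Wiener chaos of order $q\ge 2$ to two ingredients: (a) convergence of $q!\,\|g_\epsilon\|^2$ to the target variance, and (b) vanishing of every contraction norm $\|g_\epsilon\otimes_r g_\epsilon\|$ for $1\le r\le q-1$ in the same normalization. I would apply this with $q=2m-|k|$ and $g_\epsilon=C_\epsilon f_{2m-|k|,d,\epsilon}$, where $C_\epsilon$ is $\epsilon^{(d+|k|-3)/2}$ in the polynomial regime and a suitable negative power of $\log(1/\epsilon)$ in the borderline cases. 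In the special case $d=2,|k|=1,m=1$ the chaos has order one, so $I_1(f_{1,2,\epsilon})$ is automatically a centered Gaussian and only step~(a) is needed.

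First I would handle step~(a). Starting from the explicit kernel $f_{2m-|k|,d,\epsilon}$ described in Section~\ref{sec2}, which comes from the Hermite expansion of $p^{(|k|)}_{d,\epsilon}(B_s-B_r)$ against the Brownian increment $B_s-B_r$, I would write $(2m-|k|)!\,\|f_{2m-|k|,d,\epsilon}\|^2$ as an integral over $\{0\le r_1<s_1\le 1,\,0\le r_2<s_2\le 1\}$ and use Parseval's identity to turn the Gaussian factors into simple exponentials in the Fourier variable. Splitting the four times $r_1,s_1,r_2,s_2$ into the usual three sub-regions according to their relative order and rescaling, one obtains the closed-form constant $\widehat{\sigma}_{2m-|k|,d}^{2}$ in each regime. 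This is the $m$-th summand in the variance calculations that already underlie Theorem~\ref{th1}.

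Next I would handle step~(b). For each $1\le r\le 2m-|k|-1$ the norm $\|f_{2m-|k|,d,\epsilon}\otimes_r f_{2m-|k|,d,\epsilon}\|^2$ unfolds into a four-fold time integral whose integrand is a product of Gaussian heat kernels and their derivatives evaluated on Brownian increments. Passing again to Fourier and counting the powers of $\epsilon$ produced by the $r$ extra Hermite pairings on each side, each such contraction gains an additional factor of $\epsilon$ in the polynomial regime, or a factor $1/\log(1/\epsilon)$ in the logarithmic regimes, relative to the diagonal ($r=0$) contribution, which is exactly the variance. Hence $C_\epsilon^2\|f_{2m-|k|,d,\epsilon}\otimes_r f_{2m-|k|,d,\epsilon}\|\to 0$ and the Nualart--Peccati criterion closes the argument.

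The main obstacle will be step~(b) in the logarithmic regimes $d=1,|k|=2$ and $d=2,|k|=1$ with $m\ge 2$, where the variance itself grows only like $\log(1/\epsilon)$. There the contractions have to be shown to be of \emph{strictly lower} logarithmic order, which a naive Fourier bound will not deliver. I expect to need to split the Fourier domain into a ball of radius of order $1/\sqrt{\epsilon}$, where the logarithm accumulates from the diagonal part, and a tail controlled by the derivative weight $|\xi|^{|k|}$, and to verify that the additional pairing appearing for $r\ge 1$ converts one of the two logarithmic sources into an $O(1)$ contribution. Managing this bookkeeping across all $r$, $m$ and the two borderline dimensions is the principal technical burden.
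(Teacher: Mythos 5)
Your overall framework coincides with the paper's: both invoke the Nualart--Peccati fourth moment theorem (Lemma~\ref{lem3-7-26}), treat the first-chaos case $d=2,|k|=1,m=1$ separately as automatically Gaussian, and take the variance limits from the computations underlying Theorem~\ref{th1} (Propositions~\ref{pro3.1}--\ref{pro3.4}). The divergence is in how the contractions are killed, and this is exactly where your sketch is weakest. You propose a direct Fourier estimate of $\|f_{2m-|k|,d,\epsilon}\otimes_p f_{2m-|k|,d,\epsilon}\|^2$ with power counting, and you yourself concede that in the logarithmic regimes ($d=1,|k|=2$ and $d=2,|k|=1$, $m\ge 2$) a naive bound only reproduces the order of the variance and that some unproved cancellation of one logarithmic source is needed; as written, that step is a genuine gap, since the contraction is an eight-fold time integral of products of overlap kernels $K(s_i,t_i,s_j,t_j)$ raised to the powers $p$ and $2m-|k|-p$, and isolating a strictly smaller logarithmic order by domain splitting is not carried out. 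The paper avoids this estimate entirely: it rewrites the contraction norm exactly as $\tilde\beta\, E\big[\prod_{(i,j)}\xi^{(i,j)}_{2m-|k|,d,\epsilon}\big]$, where the four factors $\xi^{(i,j)}$ are built from \emph{independent} auxiliary Brownian motions and are pairwise uncorrelated, each normalized so that its second moment converges to a positive constant. It then proves that each $\xi^{(i,j)}$ is asymptotically Gaussian --- in the logarithmic cases by a Hu--Nualart style dyadic self-similarity decomposition ($\vartheta_{2m-2,1,N}(n)$ in Steps~III--IV, plus Lemmas~\ref{lem4-7-26}--\ref{lem1-6-16}) verifying the fourth-moment condition, and in the polynomial case by tightness and finite-dimensional convergence of the occupation functionals $\gamma_{2m-|k|,N}(u)$ --- and concludes via the Peccati--Tudor multivariate theorem that the vector converges to a centered Gaussian vector with independent components, whence by Wick's formula the expectation of the product tends to zero. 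This probabilistic representation converts the hard analytic estimate into a soft independence argument and is the essential idea your proposal is missing; if you want to keep your Fourier route you would need to actually produce the improved $o(\log(1/\epsilon))$ bound for every intermediate contraction order $p$, which the paper's authors evidently did not find tractable.
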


The next results are the central limit theorems for $\alpha^{(|k|)}_d(\epsilon)-E[\alpha^{(|k|)}_d(\epsilon)]$ in two cases, and we give the complete answer to the conjecture of Markowsky, which is described in~\cite{Markowsky2012}.
\begin{theorem}\label{th-sum-convergence}
	Let $\alpha^{(|k|)}_d(\epsilon)$ be defined in~\eqref{01-7-8}. We have:
\begin{itemize}
\item [(i)] If $d=2$ and $|k|=1$, then
\begin{equation*}
\left(\log\frac{1}{\epsilon}\right)^{-1}\left(\alpha^{(|k|)}_d(\epsilon)-E[\alpha^{(|k|)}_d(\epsilon)]\right)\xrightarrow{Law} \mathcal{N}\left(0,\frac{1}{4\pi^2}\right), \quad \text{ as }\  \epsilon\rightarrow 0.
\end{equation*}
\item [(ii)] If $d\geq3$ and $|k|\geq1$, then
\begin{align*}
		\epsilon^{\frac{d+|k|-3}{2}}\left(\alpha_{d}^{(|k|)}(\epsilon)-E[\alpha_{d}^{(|k|)}(\epsilon)]\right)\xrightarrow{Law} \mathcal{N}\left(0,\sum_{m>\frac{|k|}{2}}\sigma_{2m-|k|,d}^{2}\right), \quad \text{ as }\  \epsilon\rightarrow 0,
	\end{align*}
where $\sigma_{2m-|k|,d}^{2}$ is defined in~\eqref{sigma-1}.
\end{itemize}
\end{theorem}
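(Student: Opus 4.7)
The plan is to reduce Theorem~\ref{th-sum-convergence} to the chaos-by-chaos central limit theorem of Theorem~\ref{th1.3-7-26}, using the exact second-moment asymptotics of Theorem~\ref{th1} to control the tail of the Wiener chaos expansion~\eqref{chaos-expansion}.

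For part~(i), when $d=2$ and $|k|=1$, the first chaos is the only component whose variance grows at order $(\log(1/\epsilon))^{2}$. By orthogonality of Wiener chaoses,
\begin{equation*}
\sum_{m\geq 2} E\!\left[I_{2m-1}(f_{2m-1,2,\epsilon})^{2}\right] = E\!\left[\left(\alpha^{(|k|)}_d(\epsilon)-E[\alpha^{(|k|)}_d(\epsilon)]\right)^{2}\right] - E\!\left[I_{1}(f_{1,2,\epsilon})^{2}\right],
\end{equation*}
and Theorem~\ref{th1}(ii) together with Theorem~\ref{th1.3-7-26}(ii) applied with $m=1$ shows that both terms on the right equal $\tfrac{1}{4\pi^{2}}(\log(1/\epsilon))^{2}+o((\log(1/\epsilon))^{2})$. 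Hence the higher-chaos remainder $\sum_{m\geq 2} I_{2m-1}(f_{2m-1,2,\epsilon})$ has $L^{2}$-norm of order $o(\log(1/\epsilon))$, so it converges to zero in $L^{2}$ after division by $\log(1/\epsilon)$. Since $I_{1}(f_{1,2,\epsilon})$ is itself Gaussian for every $\epsilon$, Theorem~\ref{th1.3-7-26}(ii) gives $(\log(1/\epsilon))^{-1}I_{1}(f_{1,2,\epsilon})\xrightarrow{Law}\mathcal{N}(0,1/(4\pi^{2}))$, and Slutsky's lemma concludes.

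For part~(ii), with $d\geq 3$ and $|k|\geq 1$, every chaos contributes at the common scale $\epsilon^{-(d+|k|-3)/2}$. I would fix $M\in\mathbb{N}$ and decompose
\begin{equation*}
\epsilon^{(d+|k|-3)/2}\!\left(\alpha^{(|k|)}_d(\epsilon)-E[\alpha^{(|k|)}_d(\epsilon)]\right)=S_{M}(\epsilon)+R_{M}(\epsilon),
\end{equation*}
where $S_{M}(\epsilon):=\epsilon^{(d+|k|-3)/2}\sum_{|k|/2<m\leq M}I_{2m-|k|}(f_{2m-|k|,d,\epsilon})$ and $R_{M}(\epsilon)$ is the scaled tail. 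The finitely many summands of $S_{M}(\epsilon)$ live in pairwise distinct Wiener chaoses (of orders $2m-|k|$), each satisfies a univariate CLT by Theorem~\ref{th1.3-7-26}(iii), and orthogonality makes all cross-covariances identically zero. Hence the Peccati--Tudor multivariate fourth-moment theorem upgrades these componentwise CLTs to joint convergence to an independent Gaussian vector, which yields
\begin{equation*}
S_{M}(\epsilon)\xrightarrow{Law}\mathcal{N}(0,\Sigma_{M}), \qquad \Sigma_{M}:=\sum_{|k|/2<m\leq M}\sigma_{2m-|k|,d}^{2}.
\end{equation*}

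For the residual, orthogonality combined with Theorems~\ref{th1}(iv) and~\ref{th1.3-7-26}(iii) (the latter providing second-moment convergence for each fixed $m$) gives
\begin{equation*}
\lim_{\epsilon\to 0}E\!\left[R_{M}(\epsilon)^{2}\right]=\sum_{m>|k|/2}\sigma_{2m-|k|,d}^{2}-\Sigma_{M},
\end{equation*}
which vanishes as $M\to\infty$ by the summability of $\{\sigma_{2m-|k|,d}^{2}\}$ guaranteed by Theorem~\ref{th1}(iv). A standard $\epsilon$-first--then--$M$ interchange (weak convergence is stable under $L^{2}$-vanishing perturbations, and $\mathcal{N}(0,\Sigma_{M})\Rightarrow\mathcal{N}(0,\sum_{m>|k|/2}\sigma_{2m-|k|,d}^{2})$) then delivers the full convergence. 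The principal obstacle is the uniform-in-$\epsilon$ tail control of the chaos expansion, i.e.\ matching the sum of the individual chaos variance limits against the variance of the whole sum; once the sharp asymptotic of Theorem~\ref{th1}(iv) is in hand and the Peccati--Tudor theorem is invoked, the truncation argument sketched above is essentially mechanical.
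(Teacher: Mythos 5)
Your proposal is correct and takes essentially the same route as the paper: part (i) is exactly the paper's argument (Theorem~\ref{th1}(ii) and Proposition~\ref{pro3.2} show the first chaos carries the entire $(\log(1/\epsilon))^{2}$ variance, so the higher-chaos remainder vanishes in $L^{2}$ after normalization and Slutsky applies), and for part (ii) the four ingredients you verify --- uniform-in-$\epsilon$ tail control of the chaos expansion, chaoswise variance convergence from Proposition~\ref{pro3.3}, summability of the limiting variances, and vanishing contractions from the proof of Theorem~\ref{th1.3-7-26}(iii) --- are precisely the four hypotheses of Lemma~\ref{lemma-Hu-CLT}, which the paper invokes as a black box from Hu and Nualart. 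Your truncation-plus-Peccati--Tudor argument is in effect an inline re-derivation of that lemma rather than a different method.
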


The paper is organized as follows. In section 2, we present some preliminaries and the Wiener chaos expansion of $\alpha_{d}^{(|k|)}(\epsilon)$. In section 3, we investigate the precise asymptotic behavior of the $2m-|k|$-th chaotic component of $\alpha_{d}^{(|k|)}(\epsilon)$ when $\epsilon$ approaches to $0$. In Section $4$, we borrow some ideas from Hu and Nurlart~\cite{Hu-Nualart-05} and present the central limit theorems of its chaotic components and then obtain the central limit theorems for $\alpha_{d}^{(|k|)}(\epsilon)-E[\alpha_{d}^{(|k|)}(\epsilon)]$ in two different cases. Finally, some technical lemmas are proved in Section 5.

\section{Preliminaries and the Wiener chaos expansion of $\alpha_{d}^{(|k|)}(\epsilon)$}\label{sec2}
\subsection{Preliminaries}
Let $\{B_t,t\geq0\}$ be 1-dimensional Brownian motion, which is defined on a probability space $(\Omega,\mathcal{F},\mathbb{P})$. We denote by $\mathfrak{H}$ the Hilbert space obtained by the completion of the space of step functions endowed with the inner product
\begin{align*}
\langle 1_{_{[a,b]}},1_{[c,d]}\rangle _{\mathfrak{H}}:=E[(B_{b}-B_{a})(B_{d}-B_{c})].
\end{align*}
The mapping $1_{[0,t]}\to B_{t}$ can be extended to a linear isometry between $\mathfrak{H}$ and a Gaussian subspace of $L^{2}(\Omega,\mathcal{F},\mathbb{P})$. We denote by $B(h)$ the image of $h\in\mathfrak{H}$ by this isometry. For any integer $m\in\mathbb{N}$, we denote by $\mathfrak{H}^{\otimes m}$ and $\mathfrak{H}^{\odot m}$ the $m$-th tensor product of $\mathfrak{H}^{\otimes m}$, and the $m$-th symmetric tensor product of $\mathfrak{H}^{\otimes m}$, respectively.
Let $\{e_{i},i\ge1\}$ be a complete orthonormal system in $\mathfrak{H}$. For every $p=0,\cdots,m$ and for every $f\in\mathfrak{H}^{\odot m}$, we define the contraction of $f$ of order $p$ to be the element of $\mathfrak{H}^{\otimes2(m-p)}$ defined by
\begin{align*}
f\otimes_{p}f=\sum_{i_{1},\cdots,i_{p}=1}^{\infty}\langle f, e_{i_{1}}\otimes\cdots\otimes e_{i_{p}}\rangle_{\mathfrak{H}^{\otimes p}}\otimes \langle f, e_{i_{1}}\otimes\cdots\otimes e_{i_{p}}\rangle_{\mathfrak{H}^{\otimes p}}.
\end{align*}

The $m$-th Wiener chaos of $L^{2}(\Omega,\mathcal{F},\mathbb{P})$, denoted by $\mathcal{H}_m$, is the closed subspace of $L^{2}(\Omega,\mathcal{F},\mathbb{P})$ generated by the variables $\{H_{m}(B(h)),\ h\in \mathfrak{H},\ \ \|h\|_{\mathfrak{H}}=1\}$, where $H_{m}$ is the $m$-th Hermite polynomial, defined by
\begin{align*}
H_{m}(x):=\frac{(-1)^{m}}{\sqrt{m!}}e^{\frac{x^{2}}{2}}\frac{d^{m}}{dx^{m}}(e^{-\frac{x^{2}}{2}}).
\end{align*}
The mapping $I_{m}(h^{\otimes m})=\sqrt{m!}H_{m}(B(h))$ provides a linear isometry between $\mathfrak{H}^{\odot m}$ (equipped with the norm $\sqrt{m!}\|\cdot\|_{\mathfrak{H}^{\otimes m}}$) and $\mathcal{H}_m$ (equipped with the $L^{2}$-norm).

\subsection{The Wiener chaos expansion of $\alpha_{d}^{(|k|)}(\epsilon)$}
Next we will compute the Wiener chaos expansion of the approximation of the self-intersection local time
\begin{align*}
		\alpha_{d}^{(|k|)}(\epsilon)=\int_{0}^{1}\int_{0}^{s}p_{d,\epsilon}^{(|k|)}(B_{s}-B_{r})drds,
\end{align*}
where the multiindex $k=(k_{1},\cdots,k_{d})$, $|k|=\sum_{i=1}^{d}k_{i}$ and $\{B_t,t\geq0\}$ is a $d$-dimensional Brownian motion.

Given a multiindex $\mathbf{i}_{n,d}=(i_{1},\dots,i_{n})$, $1\leq i_{l}\leq d$, for $l=1,\cdots,n$, we set
\begin{align*}
A(\mathbf{i}_{n,d})=E(\xi_{i_{1}}\cdots \xi_{i_{n}}),
\end{align*}
where $\{\xi_{i},\ 1\leq i\leq d\}$ are independent centered Gaussian random variables, and $n_{j}$ denotes the number of component of $\mathbf{i}_{n,d}$ equal to $j$.
Notice that if $n=2m$ is even,
\begin{align*}
A(\mathbf{i}_{2m,d})=\frac{(2m_{1})!\cdots (2m_{d})!}{m_{1}!\cdots m_{d}!2^{m}},
\end{align*}
otherwise $A(\mathbf{i}_{n,d})=0$.

\begin{lemma}\label{sec2-lem2.1}
	Let the integers $d\geq 1$, $|k|\geq1$ and $m>\frac{|k|}{2}$. We have
	\begin{align*}
	\alpha_{d}^{(|k|)}(\epsilon)=\sum_{m\geq\frac{|k|}{2}}I_{2m-|k|}(f_{2m-|k|,d,\epsilon}),
	\end{align*}
	where $m_{j}\geq\frac{k_{j}}{2}$ and
	\begin{align}\label{sec2-1}
	 f_{2m-|k|,d,\epsilon}(\mathbf{i}_{2m,d},r_{1},\cdots,r_{2m-|k|})=\frac{(-1)^{m}A(\mathbf{i}_{2m,d})}{(2\pi)^{\frac{d}{2}}(2m-|k|)!}\int_{0}^{1}\int_{0}^{t}\frac{\prod_{j=1}^{2m-|k|}1_{(s,t]}(r_{j})}{(t-s+\epsilon)^{m+\frac{d}{2}}}dsdt.
	\end{align}

In particular, we have for $d=1$ and $|k|\geq2$
	\begin{align}\label{sec2-2}
	 f_{2m-|k|,1,\epsilon}(r_{1},\cdots,r_{2m-|k|})=\frac{(-1)^{m}(2m)!}{\sqrt{2\pi}(2m-|k|)!m!2^{m}}\int_{0}^{1}\int_{0}^{t}\frac{\prod_{j=1}^{2m-|k|}1_{(s,t]}(r_{j})}{(t-s+\epsilon)^{m+\frac{1}{2}}}dsdt.
	\end{align}
\end{lemma}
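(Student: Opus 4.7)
The plan is to compute the Wiener chaos expansion of $p_{d,\epsilon}^{(|k|)}(B_t-B_s)$ pointwise in $(s,t)$ with $0<s<t<1$ and then integrate. I would first invoke the Fourier representation of the derivative of the Gaussian kernel,
\begin{align*}
p_{d,\epsilon}^{(|k|)}(x) = \frac{1}{(2\pi)^d}\int_{\mathbb{R}^d} \prod_{j=1}^d (i\xi_j)^{k_j}\, e^{-\epsilon|\xi|^2/2}\, e^{i\xi\cdot x}\, d\xi,
\end{align*}
substitute $x=B_t-B_s$, and then expand the exponential using the chaos expansion componentwise. Since the coordinates of $B$ are independent, in each component the generating-function identity gives
\begin{align*}
e^{i\xi_j(B_t^{(j)}-B_s^{(j)})} = e^{-\xi_j^2(t-s)/2}\sum_{n_j\geq 0}\frac{(i\xi_j)^{n_j}}{n_j!}\, I_{n_j}^{(j)}\bigl(1_{[s,t]}^{\otimes n_j}\bigr),
\end{align*}
where $I_{n_j}^{(j)}$ denotes the multiple Wiener integral with respect to the $j$-th component of $B$.

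Second, I would combine the two Gaussian factors into $e^{-|\xi|^2(t-s+\epsilon)/2}$, interchange sum and integral (justified by the Gaussian decay in $\xi$), and evaluate the $d$-fold Gaussian moment integral. Each one-dimensional factor $\int \xi_j^{k_j+n_j}e^{-\xi_j^2(t-s+\epsilon)/2}d\xi_j$ vanishes unless $k_j+n_j$ is even, forcing the substitution $n_j=2m_j-k_j$ with $m_j\geq k_j/2$; the nonzero contribution is $\sqrt{2\pi/(t-s+\epsilon)}\,(t-s+\epsilon)^{-m_j}(2m_j-1)!!$. Multiplying these across $j=1,\dots,d$ and using $\prod_j i^{2m_j}=(-1)^m$ with $m:=m_1+\cdots+m_d$, the prefactor collapses to
\begin{align*}
\frac{(-1)^m}{(2\pi)^{d/2}(t-s+\epsilon)^{m+d/2}}\prod_{j=1}^d\frac{(2m_j)!}{2^{m_j}m_j!}\cdot\prod_{j=1}^d\frac{1}{(2m_j-k_j)!}.
\end{align*}
The product $\prod_j (2m_j)!/(2^{m_j}m_j!)$ is precisely the Gaussian moment $A(\mathbf{i}_{2m,d})$ whenever the multiplicity of index $j$ in $\mathbf{i}_{2m,d}$ equals $2m_j$, which is exactly the bookkeeping encoded in the statement.

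The third step, which I expect to be the main obstacle, is to repackage the product $\prod_{j=1}^d I_{2m_j-k_j}^{(j)}\bigl(1_{[s,t]}^{\otimes(2m_j-k_j)}\bigr)$ as a single multi-index chaos element $I_{2m-|k|}$ for the $d$-dimensional Brownian motion. Because the one-dimensional kernels live in mutually orthogonal subspaces of the underlying Hilbert space $L^2([0,1])\otimes\mathbb{R}^d$, all contractions vanish and the product equals a (symmetrized) multiple integral of order $2m-|k|$ whose kernel is obtained by tensoring the time indicator $\prod_j 1_{(s,t]}(r_j)$ with a multi-index selector of length $2m-|k|$; the combinatorial weight of this symmetrization, together with the Gaussian moment $A(\mathbf{i}_{2m,d})$ and the factor $1/(2m-|k|)!$, recovers exactly the kernel in~\eqref{sec2-1}. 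Finally, integrating $\int_0^1\int_0^t ds\,dt$ past the multiple integral (justified by Fubini and $L^2$-continuity of $I_{2m-|k|}$) yields the stated formula. The one-dimensional case~\eqref{sec2-2} drops out as the trivial specialization $d=1$, $A(\mathbf{i}_{2m,1})=(2m)!/(2^m m!)$, where no multi-index summation is needed.
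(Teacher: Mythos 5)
Your proposal is correct, but it reaches the expansion by a genuinely different route from the paper. The paper applies Stroock's formula, $f_{n,d,\epsilon}=\frac{1}{n!}\int_0^1\int_0^t E[D^{i_1,\dots,i_n}_{r_1,\dots,r_n}p^{(|k|)}_{d,\epsilon}(B_t-B_s)]\,ds\,dt$, so the kernel and the $\frac{1}{(2m-|k|)!}$ factor are produced directly and all that remains is the Fourier-transform evaluation of the Gaussian moment $E[\xi_{i_1}\cdots\xi_{i_n}\xi_1^{k_1}\cdots\xi_d^{k_d}]=A(\mathbf{i}_{2m,d})$. You instead expand $e^{i\xi\cdot(B_t-B_s)}$ via the generating-function identity componentwise and recombine through the product formula for multiple integrals with kernels supported on orthogonal components; the Fourier/Gaussian-moment computation is identical in both routes (your $\prod_j(2m_j-1)!!=\prod_j(2m_j)!/(2^{m_j}m_j!)=A(\mathbf{i}_{2m,d})$ matches the paper's moment), and the parity constraint $n_j+k_j=2m_j$ arises the same way. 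The step you flag as the main obstacle does close cleanly: symmetrizing $\bigotimes_j(1_{[s,t]}\otimes e_j)^{\otimes n_j}$ contributes a weight $n_1!\cdots n_d!/n!$ on configurations with multiplicities $(n_j)$, which cancels the $\prod_j 1/n_j!$ coming from the exponential series and leaves exactly the $\frac{1}{(2m-|k|)!}$ in \eqref{sec2-1}. What Stroock's formula buys is precisely skipping this symmetrization bookkeeping; what your route buys is that it avoids Malliavin derivatives entirely and only uses the elementary generating function for Hermite polynomials together with the (contraction-free) product formula. Your justifications for interchanging sum and integral and for the final stochastic Fubini are stated rather than proved, but they are standard and the paper does not spell them out either.
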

\begin{proof}
	By Stroock's formula~(see~\cite{Nualart}), we have
	\begin{align*}
	\alpha_{d}^{(|k|)}(\epsilon)=\sum_{n=0}^{\infty}I_{n}(f_{n,d,\epsilon}),
	\end{align*}
	where
	\begin{align*}
	f_{n,d,\epsilon}=\frac{1}{n!}\int_{0}^{1}\int_{0}^{t}E[D_{r_{1},\cdots,r_{n}}^{i_{1},\cdots,i_{n}}p_{d,\epsilon}^{(|k|)}(B_{t}-B_{s})]dsdt.
	\end{align*}
	for $1\leq i_{l}\leq d$, $0\leq r_{l}\leq 1$, $l=1,\cdots,n$. We can write
	\begin{align*}
	 E[D_{r_{1},\cdots,r_{n}}^{i_{1},\cdots,i_{n}}p_{d,\epsilon}^{(|k|)}(B_{t}-B_{s})]=E[\partial_{i_{1}}\cdots\partial_{i_{n}}\partial_{1}^{k_{1}}\cdots\partial_{d}^{k_{d}}p_{d,\epsilon}(B_{t}-B_{s})]\prod_{j=1}^{n}1_{(s,t]}(r_{j}).
	\end{align*}
	It follows from the Fourier transform that
	\begin{align*}
	 E&[\partial_{i_{1}}\cdots\partial_{i_{n}}\partial_{1}^{k_{1}}\cdots\partial_{d}^{k_{d}}p_{d,\epsilon}(B_{t}-B_{s})]\\
&=\frac{i^{n+|k|}}{(2\pi)^{d}}\int_{\mathbb{R}^{d}}q_{i_{1}}\cdots q_{i_{n}}q_{1}^{k_{1}}\cdots q_{d}^{k_{d}}E[e^{i\langle q, B_{t}-B_{s}\rangle}]e^{-\frac{\epsilon |q|^{2}}{2}}dq\\
	&=\frac{i^{n+|k|}}{(2\pi)^{d}}\int_{\mathbb{R}^{d}}q_{i_{1}}\cdots q_{i_{n}}q_{1}^{k_{1}}\cdots q_{d}^{k_{d}}e^{-\frac{ |q|^{2}(t-s+\epsilon)}{2}}dq\\
	&=\frac{i^{n+|k|}}{(2\pi)^{\frac{d}{2}}(t-s+\epsilon)^{\frac{n+|k|+d}{2}}}\int_{\mathbb{R}^{d}}\frac{1}{(2\pi)^{\frac{d}{2}}}q_{i_{1}}\cdots q_{i_{n}}q_{1}^{k_{1}}\cdots q_{d}^{k_{d}}e^{-\frac{|q|^{2}}{2}}dq\\
	&=\frac{i^{n+|k|}}{(2\pi)^{\frac{d}{2}}(t-s+\epsilon)^{\frac{n+|k|+d}{2}}}E[\xi_{i_{1}}\cdots\xi_{i_{n}}\xi_{1}^{k_{1}}\cdots\xi_{d}^{k_{d}}],	
	\end{align*}
	where $\{\xi_{i},\ 1\leq i\leq d\}$ are independent $N(0,1)$ random variables. When $n+|k|=2m$, $n_{j}$ denotes the number of $(i_{1},\cdots,i_{n})$ equal to j. Let $n_{j}+k_{j}=2m_{j}$, where $m_{j}\geq\frac{k_{j}}{2}$, for $j=1,\cdots, d$ and  $\sum_{j=1}^{d}m_{j}=m$. Obviously, $\sum_{j=1}^{d}n_{j}=n$ and $\sum_{j=1}^{d}k_{j}=|k|$. Notice that
	\begin{align*}
	E[\xi_{i_{1}}\cdots\xi_{i_{n}}\xi_{1}^{k_{1}}\cdots\xi_{d}^{k_{d}}]
	=E[\xi_{1}^{n_{1}+k_{1}}\cdots\xi_{d}^{n_{d}+k_{d}}]=\frac{(2m_{1})!\cdots (2m_{d})!}{m_{1}!\cdots m_{d}!2^{m}}=A(\mathbf{i}_{2m,d}).
	\end{align*}
    Hence we obtain (\ref{sec2-1}).

Moreover, (\ref{sec2-2}) follows from $A(\mathbf{i}_{2m,1})=\frac{(2m)!}{m!2^{m}}$ for $d=1$. Thus, we complete the proof.
\end{proof}

\section{The precise asymptotic behavior of the variance of $I_{2m-|k|}(f_{2m-|k|,d,\epsilon})$}\label{sec3}
In this section, we discuss the precise asymptotic behavior of the variance of chaotic component $I_{2m-|k|}(f_{2m-|k|,d,\epsilon})$ for any $d\geq1$ and $|k|\geq1$, and we also investigate the precise asymptotic behavior of $\alpha^{(|k|)}_d(\epsilon)-E[\alpha^{(|k|)}_d(\epsilon)]$. First we introduce some notations. For $d\geq1$ and $|k|\geq1$, we denote
\begin{align*}
V^{(2m-|k|,d)}(\epsilon):=E[I_{2m-|k|}(f_{2m-|k|,d,\epsilon})^{2}],
\end{align*}
where $2m>|k|$.
It follows from Lemma \ref{sec2-lem2.1} that
\begin{equation}\label{V}
\begin{split}
V&^{(2m-|k|,d)}(\epsilon)\\
&=(2m-|k|)!\sum_{m_{1}+\cdots+m_{d}=m}\frac{(2m-|k|)!}{(2m_{1}-k_{1})!\cdots(2m_{d}-k_{d})!}\left(\frac{A(\mathbf{i}_{2m,d})}{(2\pi)^{\frac{d}{2}}(2m-|k|)!}\right)^{2}\\
&\qquad \times\int_{0}^{1}\int_{0}^{t_{1}}\int_{0}^{1}\int_{0}^{t_{2}}\frac{(\langle 1_{[s_{1},t_{1}]},1_{[s_{2},t_{2}]}\rangle_{\mathfrak{H}})^{2m-|k|}}{(t_{1}-s_{1}+\epsilon)^{m+\frac{d}{2}}(t_{2}-s_{2}+\epsilon)^{m+\frac{d}{2}}}ds_{1}dt_{1}ds_{2}dt_{2}\\
&=\beta_{2m-|k|,d}\int_{0}^{1}\int_{0}^{t_{1}}\int_{0}^{1}\int_{0}^{t_{2}}\frac{([s_{1},t_{1}]\cap [s_{2},t_{2}])^{2m-|k|}}{(t_{1}-s_{1}+\epsilon)^{m+\frac{d}{2}}(t_{2}-s_{2}+\epsilon)^{m+\frac{d}{2}}}ds_{1}dt_{1}ds_{2}dt_{2}\\
&=2\beta_{2m-|k|,d}\left[\int_{\mathcal{T}_{1}}\frac{(t_{1}-s_{2})^{2m-|k|}}{(t_{1}-s_{1}+\epsilon)^{m+\frac{d}{2}}(t_{2}-s_{2}+\epsilon)^{m+\frac{d}{2}}}ds_{1}dt_{1}ds_{2}dt_{2}\right.\\
&\qquad\qquad+\left.\int_{\mathcal{T}_{2}}\frac{(t_{2}-s_{2})^{2m-|k|}}{(t_{1}-s_{1}+\epsilon)^{m+\frac{d}{2}}(t_{2}-s_{2}+\epsilon)^{m+\frac{d}{2}}}ds_{1}dt_{1}ds_{2}dt_{2}\right]\\
&:=2\beta_{2m-|k|,d}\Big[V_{1}^{(2m-|k|,d)}(\epsilon)+V_{2}^{(2m-|k|,d)}(\epsilon)\Big],
\end{split}
\end{equation}
where
\begin{equation}\label{beta}
\begin{split}
\beta_{2m-|k|,d}&=\sum_{\substack{m_{1}+\cdots+m_{d}=m\\m_{j}\ge\frac{k_{j}}{2},\ j=1,\cdots,d}}\frac{((2m-|k|)!)^2}{(2m_{1}-k_{1})!\cdots(2m_{d}-k_{d})!}\left(\frac{A(\mathbf{i}_{2m,d})}{(2\pi)^{\frac{d}{2}}(2m-|k|)!}\right)^{2}\\
&=\sum_{\substack{m_{1}+\cdots+m_{d}=m\\m_{j}\ge\frac{k_{j}}{2},\ j=1,\cdots,d}}\frac{((2m_{1})!\cdots(2m_{d})!)^{2}}{(2\pi)^{d}2^{2m}(2m_{1}-k_{1})!\cdots(2m_{d}-k_{d})!(m_{1}!)^{2}\cdots(m_{d}!)^{2}},
\end{split}
\end{equation}
and $\mathcal{T}_{1}$, $\mathcal{T}_{2}$ is defined by
\begin{align*}
\mathcal{T}_{1}=\{(s_{1},t_{1},s_{2},t_{2}):0<s_{1}<s_{2}<t_{1}<t_{2}<1\},\\
\mathcal{T}_{2}=\{(s_{1},t_{1},s_{2},t_{2}):0<s_{1}<s_{2}<t_{2}<t_{1}<1\}.
\end{align*}

\subsection{The precise asymptotic behavior of $V^{(2m-|k|,d)}(\epsilon)$}\label{sec3.1}
The precise asymptotic behavior of the variance $V^{(2m-|k|,d)}(\epsilon)$ is described in the following propositions for different cases.
\begin{proposition}\label{pro3.1}
Let $d=1$ and $|k|=2$. we have for $m\geq2$
\begin{equation*}
\lim\limits_{\epsilon\to 0}\left(\log\frac{1}{\epsilon}\right)^{-1}V^{(2m-2,1)}(\epsilon)= 2\beta_{2m-2,1}\left[\frac{1}{(m-\frac{1}{2})^{2}}+\frac{1}{(m-\frac{1}{2})(m-\frac{3}{2})}\right],
\end{equation*}
where
\begin{align*}
	\beta_{2m-2,1}=\frac{(2m-1)(2m-1)!}{\pi 2^{2m-1}((m-1)!)^{2}}.
	\end{align*}
\end{proposition}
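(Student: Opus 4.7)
The plan is to use the decomposition~\eqref{V} to reduce the problem to analyzing the two spatial integrals $V_1^{(2m-2,1)}(\epsilon)$ and $V_2^{(2m-2,1)}(\epsilon)$ separately, and then to extract the $\log(1/\epsilon)$ divergence by a scaling in the ``shortest interval'' variable $b$. Specifically, once the two limits
$$
\lim_{\epsilon\to 0}\frac{V_1^{(2m-2,1)}(\epsilon)}{\log(1/\epsilon)}=\frac{1}{(m-\tfrac12)^2},\qquad \lim_{\epsilon\to 0}\frac{V_2^{(2m-2,1)}(\epsilon)}{\log(1/\epsilon)}=\frac{1}{(m-\tfrac12)(m-\tfrac32)}
$$
are established, the claimed value follows together with the simplification of $\beta_{2m-2,1}$ obtained from~\eqref{beta} with the only admissible multiindex $m_1=m$; the latter is a short factorial manipulation using $(2m)!=2m(2m-1)(2m-2)!$ and $(m!)^2=m^2((m-1)!)^2$.

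To compute $V_1^{(2m-2,1)}$ and $V_2^{(2m-2,1)}$, parametrize $\mathcal{T}_1$ by $a=s_2-s_1$, $b=t_1-s_2$, $c=t_2-t_1$ (so that $t_1-s_1=a+b$, $t_2-s_2=b+c$ and the overlap $t_1-s_2=b$) and $\mathcal{T}_2$ by $a=s_2-s_1$, $b=t_2-s_2$, $c=t_1-t_2$ (so that $t_1-s_1=a+b+c$ and $t_2-s_2=b$); integrating out $s_1\in(0,1-a-b-c)$ produces the factor $(1-a-b-c)_+$, yielding
\begin{align*}
V_1^{(2m-2,1)}(\epsilon) &= \int_{\substack{a,b,c>0\\ a+b+c<1}}\!(1-a-b-c)\,\frac{b^{2m-2}\,da\,db\,dc}{(a+b+\epsilon)^{m+\frac12}(b+c+\epsilon)^{m+\frac12}},\\
V_2^{(2m-2,1)}(\epsilon) &= \int_{\substack{a,b,c>0\\ a+b+c<1}}\!(1-a-b-c)\,\frac{b^{2m-2}\,da\,db\,dc}{(a+b+c+\epsilon)^{m+\frac12}(b+\epsilon)^{m+\frac12}}.
\end{align*}
For $V_2$ the factor $(b+\epsilon)^{-(m+\frac12)}$ pulls outside the $(a,c)$-integration, and the substitution $w=a+c$ reduces the inner integral to $\int_0^{1-b}w(1-b-w)(w+b+\epsilon)^{-(m+\frac12)}\,dw$; the further rescaling $w=(b+\epsilon)u$ combined with dominated convergence (valid because $m\geq 2$ makes $u/(u+1)^{m+\frac12}$ integrable at infinity) yields the asymptotic $(b+\epsilon)^{3/2-m}B(2,m-\tfrac32)$ as $b+\epsilon\to 0$, where $B(2,m-\tfrac32)=\Gamma(2)\Gamma(m-\tfrac32)/\Gamma(m+\tfrac12)=1/[(m-\tfrac12)(m-\tfrac32)]$. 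For $V_1$ the scaling $a=(b+\epsilon)u$, $c=(b+\epsilon)v$ transforms the inner $(a,c)$-integral into $(b+\epsilon)^{1-2m}$ times a convergent double integral whose limit factors as a product of two copies of $\int_0^\infty(u+1)^{-(m+\frac12)}du=1/(m-\tfrac12)$. In both cases, the elementary asymptotic
$$
\int_0^1\frac{b^{2m-2}}{(b+\epsilon)^{2m-1}}\,db=\log(1/\epsilon)+O(1),
$$
proved by splitting at $b=\epsilon$ and direct computation, then supplies the logarithmic factor.

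The main technical obstacle will be upgrading these pointwise-in-$b$ asymptotics of the inner integrals to estimates uniform enough to survive integration against $b^{2m-2}\,db$ and to produce sharp constants. The upper bound is straightforward: dropping $(1-a-b-c)\leq 1$ and enlarging the region to $(0,\infty)^2$ already gives the correct leading constants. For the matching lower bound one restricts $(a,c)$ to a neighborhood of the origin on which $(1-a-b-c)$ stays close to $1$, invokes a Fatou-type argument to recover the same limit constants, and separately verifies that the ``bulk'' regime $b\geq\delta$ contributes only $O(1)$ and is therefore absorbed into the $o(\log(1/\epsilon))$ remainder. Summing the resulting limits for $V_1^{(2m-2,1)}$ and $V_2^{(2m-2,1)}$ and multiplying by $2\beta_{2m-2,1}$ completes the proof.
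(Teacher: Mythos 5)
Your proposal is correct, and it shares the paper's skeleton — the decomposition \eqref{V} into $V_1^{(2m-2,1)}$ and $V_2^{(2m-2,1)}$, the same changes of variables on $\mathcal{T}_1$ and $\mathcal{T}_2$, the same two limits $1/(m-\frac12)^2$ and $1/((m-\frac12)(m-\frac32))$, and the same factorial simplification of $\beta_{2m-2,1}$ — but the way you extract the limits is genuinely different. The paper keeps $s_1$ as a variable, performs the inner integrations \emph{exactly} by iterated antidifferentiation, splits the result into explicit pieces $K_1,\dots,K_6$ and $G_1,\dots,G_8$ after rescaling by $\epsilon$, and disposes of each by L'H\^opital's rule (plus Lemma~\ref{lastsec-lem2} for one stubborn term). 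You instead integrate out $s_1$ first to get the factor $(1-a-b-c)$, rescale $(a,c)$ by $b+\epsilon$, and identify the inner integrals asymptotically as Beta-function constants times $(b+\epsilon)^{1-2m}$ resp.\ $(b+\epsilon)^{3/2-m}$, reducing everything to $\int_0^1 b^{2m-2}(b+\epsilon)^{1-2m}\,db=\log(1/\epsilon)+O(1)$. Your route is shorter and more conceptual (it makes visible \emph{why} the constants are $(\int_0^\infty(1+u)^{-m-1/2}du)^2$ and $B(2,m-\tfrac32)$, and it is essentially the same mechanism the paper itself uses later in Proposition~\ref{pro3.3}), at the cost of the uniformity issue you correctly flag: a bare Fatou argument is not quite enough for the lower bound, because the mass of $b^{2m-2}(b+\epsilon)^{1-2m}\,db$ has divergent total weight; one must first localize to $b<\eta$ (where the bracketed inner integral is within any prescribed tolerance of its limit, uniformly in $\epsilon$), check that $b\geq\eta$ contributes $O_\eta(1)=o(\log(1/\epsilon))$, and then send $\eta\to0$ after $\epsilon\to0$. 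With that two-parameter limit spelled out, your squeeze between the enlarged-domain upper bound and the localized lower bound closes the argument; the paper's exact computation avoids this uniformity discussion entirely but pays for it with a longer bookkeeping of boundary terms.
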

\begin{proof}For $d=1$ and $|k|=2$, we have from~\eqref{V}
\begin{equation*}
\begin{split}
V^{(2m-2,1)}(\epsilon)&=2\beta_{2m-2,1}\int_{\mathcal{T}_{1}}\frac{(t_1-s_2)^{2m-2}}{(t_1-s_1+\epsilon)^{m+\frac{1}{2}}(t_2-s_2+\epsilon)^{m+\frac{1}{2}}}ds_{1}
dt_{1}ds_{2}dt_{2}\\
&\quad+2\beta_{2m-2,1}\int_{\mathcal{T}_{2}}\frac{(t_2-s_2)^{2m-2}}{(t_1-s_1+\epsilon)^{m+\frac{1}{2}}(t_2-s_2+\epsilon)^{m+\frac{1}{2}}}ds_{1}dt_{1}ds_{2}dt_{2}\\
&=2\beta_{2m-2,1}\Big[V_{1}^{(2m-2,1)}(\epsilon)+V_{2}^{(2m-2,1)}(\epsilon)\Big]
,
\end{split}
\end{equation*}
where
\begin{align*}
	\beta_{2m-2,1}=\frac{(2m-1)(2m-1)!}{\pi 2^{2m-1}((m-1)!)^{2}}.
	\end{align*}
 Making the change of variables $(s_{1},s_{2},t_{1},t_{2})=(s_{1},a:=s_{2}-s_{1},b:=t_{1}-s_{2},c:=t_{2}-t_{1})$, one can write
	\begin{align*}
	 V_{1}^{(2m-2,1)}(\epsilon)&=\int_{\mathcal{T}_{1}}\frac{(t_{1}-s_{2})^{2m-2}}{(t_{1}-s_{1}+\epsilon)^{m+\frac{1}{2}}(t_{2}-s_{2}+\epsilon)^{m+\frac{1}{2}}}ds_{1}dt_{1}ds_{2}dt_{2}\\
	&=\int_{0<s_{1}+a+b+c<1}\frac{b^{2m-2}}{(a+b+\epsilon)^{m+\frac{1}{2}}(b+c+\epsilon)^{m+\frac{1}{2}}}ds_{1}dadbdc\\
    &=\frac{1}{m-\frac{1}{2}}\int_{0<s_{1}+a+b<1}\frac{b^{2m-2}}{(a+b+\epsilon)^{m+\frac{1}{2}}(b+\epsilon)^{m-\frac{1}{2}}}ds_{1}dadb\\
    &\quad-\frac{1}{m-\frac{1}{2}}\int_{0<s_{1}+a+b<1}\frac{b^{2m-2}}{(a+b+\epsilon)^{m+\frac{1}{2}}(1-s_1-a+\epsilon)^{m-\frac{1}{2}}}ds_{1}dadb.\\
    &=\frac{1}{(m-\frac{1}{2})^{2}}\int_{0<s_{1}+b<1}\left[\frac{b^{2m-2}}{(b+\epsilon)^{2m-1}}-\frac{b^{2m-2}}{(1-s_1+\epsilon)^{m-\frac12}(b+\epsilon)^{m-\frac12}}\right]ds_{1}db\\
    &\quad-\frac{1}{(m-\frac{1}{2})(m-\frac{3}{2})}\int_{0<a+b<1}\frac{b^{2m-2}}{(a+b+\epsilon)^{m+\frac{1}{2}}(b+\epsilon)^{m-\frac{3}{2}}}dadb\\
     &\quad+\frac{1}{(m-\frac{1}{2})(m-\frac{3}{2})}\int_{0<a+b<1}\frac{b^{2m-2}}{(a+b+\epsilon)^{m+\frac{1}{2}}(1-a+\epsilon)^{m-\frac{3}{2}}}dadb.
	\end{align*}
By changing the coordinates $(a,b)=(\epsilon x,\epsilon y)$, we rewrite that
\begin{align*}
V_{1}&^{(2m-2,1)}(\epsilon)\\
&=\frac{1}{(m-\frac{1}{2})^{2}}\int_{0}^{\frac{1}{\epsilon}}\frac{y^{2m-2}}{(y+1)^{2m-1}}dy-\frac{\epsilon}{(m-\frac{1}{2})^{2}}\int_{0}^{\frac{1}{\epsilon}}\frac{y^{2m-1}}{(y+1)^{2m-1}}dy\\
&
-\frac{2\epsilon}{(m-\frac{1}{2})^{2}(m-\frac{3}{2})}\int_{0}^{\frac{1}{\epsilon}}\frac{y^{2m-2}}{(y+1)^{2m-2}}dy+
\frac{\epsilon^{m-\frac{1}{2}}(1+\epsilon)^{-m+\frac{3}{2}}}{(m-\frac{1}{2})^{2}(m-\frac{3}{2})}\int_{0}^{\frac{1}{\epsilon}}\frac{y^{2m-2}}{(y+1)^{m-\frac{1}{2}}}dy\\
&+\frac{\epsilon^{m+\frac{1}{2}}(1+\epsilon)^{-m+\frac{1}{2}}}{(m-\frac{1}{2})^{2}(m-\frac{3}{2})}\int_{0}^{\frac{1}{\epsilon}}\frac{y^{2m-2}}{(y+1)^{m-\frac{3}{2}}}dy\\
&+\frac{\epsilon}{(m-\frac{1}{2})(m-\frac{3}{2})}\int_{0<x+y<\frac{1}{\epsilon}}\frac{y^{2m-2}}{(x+y+1)^{m+\frac{1}{2}}(1+\frac{1}{\epsilon}-x)^{m-\frac{3}{2}}}dxdy\\
:&=\sum_{i=1}^{6}K_{i}^{(2m-2,1)}(\epsilon).
\end{align*} 	
By L'H\^{o}pital's rule, thus
\begin{align*}
\lim_{\epsilon\rightarrow0}\frac{K_{1}^{(2m-2,1)}(\epsilon)}{\log(1/\epsilon)}=\frac{1}{(m-\frac{1}{2})^{2}}.
\end{align*}	
It is trivial that for $i=2,\dots,5$,
\begin{align*}
\lim_{\epsilon\rightarrow0}\frac{K_{i}^{(2m-2,1)}(\epsilon)}{\log(1/\epsilon)}=0.
\end{align*}
Lemma \ref{lastsec-lem2} gives
\begin{align*}
	\lim_{\epsilon\rightarrow0}\frac{K_{6}^{(2m-2,1)}(\epsilon)}{\log(1/\epsilon)}=0.
\end{align*}
Summing up the above estimates,
\begin{equation}\label{sec3-3.4}
\lim_{\epsilon\rightarrow0}\frac{V_{1}^{(2m-2,1)}(\epsilon)}{\log(1/\epsilon)}=\frac{1}{(m-\frac{1}{2})^2}.
\end{equation}
Next we just need to deal with the term $V_{2}^{(2m-2,1)}(\epsilon)$. Similarly, changing the variables $(s_{1},s_{2},t_{2},t_{1})=(s_{1},a:=s_{2}-s_{1},b:=t_{2}-s_{2},c:=t_{1}-t_{2})$, we have
\begin{align*}
V_{2}^{(2m-2,1)}(\epsilon)=&\int_{\mathcal{T}_{2}}\frac{(t_{2}-s_{2})^{2m-2}}{(t_{1}-s_{1}+\epsilon)^{m+\frac{1}{2}}(t_{2}-s_{2}+\epsilon)^{m+\frac{1}{2}}}ds_{1}dt_{1}ds_{2}dt_{2}\\
=&\int_{0<s_{1}+a+b+c<1}\frac{b^{2m-2}}{(a+b+c+\epsilon)^{m+\frac{1}{2}}(b+\epsilon)^{m+\frac{1}{2}}}ds_{1}dadbdc\\
=&\frac{1}{m-\frac{1}{2}}\int_{0<s_{1}+a+b<1}\frac{b^{2m-2}}{(a+b+\epsilon)^{m-\frac{1}{2}}(b+\epsilon)^{m+\frac{1}{2}}}ds_{1}dadbdc\\
&+\frac{1}{m-\frac{1}{2}}\int_{0<s_{1}+a+b<1}\frac{b^{2m-2}}{(1-s_1+\epsilon)^{m-\frac{1}{2}}(b+\epsilon)^{m+\frac{1}{2}}}ds_{1}dadbdc\\
=&\frac{1}{(m-\frac{1}{2})(m-\frac{3}{2})}\left[\int_{0}^{\frac{1}{\epsilon}}\frac{y^{2m-2}}{(y+1)^{2m-1}}dy-\epsilon\int_{0}^{\frac{1}{\epsilon}}\frac{y^{2m-1}}{(y+1)^{2m-1}}dy\right.\\
&-\frac{\epsilon}{m-\frac{5}{2}}\int_{0}^{1}\frac{y^{2m-2}}{(y+1)^{2m-2}}dy+\frac{\epsilon^{m-\frac{3}{2}}(1+\epsilon)^{-m+\frac{5}{2}}}{m-\frac{5}{2}}\int_{0}^{\frac{1}{\epsilon}}\frac{y^{2m-2}}{(y+1)^{m+\frac{1}{2}}}dy\\
&-\frac{\epsilon}{m-\frac{5}{2}}\int_{0}^{\frac{1}{\epsilon}}\frac{y^{2m-2}}{(y+1)^{2m-2}}dy+\frac{\epsilon^{m-\frac{3}{2}}(1+\epsilon)^{-m+\frac{5}{2}}}{m-\frac{5}{2}}\int_{0}^{\frac{1}{\epsilon}}\frac{y^{2m-2}}{(y+1)^{m+\frac{1}{2}}}dy\\
&\left.+\frac{\epsilon^{m-\frac{3}{2}}}{(1+\epsilon)^{m-\frac{3}{2}}}\int_{0}^{\frac{1}{\epsilon}}\frac{y^{2m-2}}{(y+1)^{m+\frac{1}{2}}}dy-\frac{\epsilon^{m-\frac{1}{2}}}{(1+\epsilon)^{m-\frac{3}{2}}}\int_{0}^{\frac{1}{\epsilon}}\frac{y^{2m-1}}{(y+1)^{m+\frac{1}{2}}}dy\right]\\
&:=\frac{1}{(m-\frac{1}{2})(m-\frac{3}{2})}\sum_{i=1}^{8}G_{i}^{(2m-2,1)}(\epsilon),
\end{align*}
where we replace $b$ by $\epsilon y$. By L'H\^{o}pital's rule, it is obvious that
\begin{align*}
\lim_{\epsilon\rightarrow0}\frac{G_{1}^{(2m-2,1)}(\epsilon)}{\log(1/\epsilon)}=1,
\end{align*}
and for $i=2,\dots, 8$,
\begin{equation*}
\lim_{\epsilon\rightarrow0}\frac{ G_{i}^{(2m-2,1)}(\epsilon)}{\log(1/\epsilon)}=0.
\end{equation*}
Thus,
\begin{equation}\label{sec3-3.5}
\lim_{\epsilon\rightarrow0}\frac{V_{2}^{(2m-2,1)}(\epsilon)}{\log(1/\epsilon)}=\frac{1}{(m-\frac{1}{2})(m-\frac{3}{2})}.
\end{equation}
It follows from \eqref{sec3-3.4} and \eqref{sec3-3.5} that
\begin{align*}
&\lim\limits_{\epsilon\to 0}\left(\log\frac{1}{\epsilon}\right)^{-1}V^{(2m-2,1)}(\epsilon)\\
&=
\lim_{\epsilon\rightarrow0}\frac{2\beta_{2m-2,1}}{\log(1/\epsilon)}V_{1}^{(2m-2,1)}(\epsilon)+\lim_{\epsilon\rightarrow0}\frac{2\beta_{2m-2,1}}{\log(1/\epsilon)}V_{2}^{(2m-2,1)}(\epsilon)\\
&=\frac{2\beta_{2m-2,1}}{(m-\frac{1}{2})^2}+\frac{2\beta_{2m-2,1}}{(m-\frac{1}{2})(m-\frac{3}{2})}\\
&=2\beta_{2m-2,1}\left[\frac{1}{(m-\frac{1}{2})^{2}}+\frac{1}{(m-\frac{1}{2})(m-\frac{3}{2})}\right].
\end{align*}
\end{proof}
\begin{proposition}\label{pro3.2}
Let $d=2$ and $|k|=1$. We have for $m=1$

\begin{align*}
		\lim_{\epsilon\rightarrow 0}\left(\log\frac{1}{\epsilon}\right)^{-2}V^{(1,2)}(\epsilon)=\frac{1}{4\pi^{2}},
	\end{align*}
    and for $m\geq2$
    \begin{align*}
    	\lim_{\epsilon\rightarrow 0}\left(\log\frac{1}{\epsilon}\right)^{-1}V^{(2m-1,2)}(\epsilon)=\frac{2\beta_{2m-1,2}(2m-1)}{m^2(m-1)},
    \end{align*}
	where
    \begin{align}\label{014}
    	\beta_{2m-1,2}:=\sum_{\substack{m_{1}+m_{2}=m\\m_1\geq 1,\  m_2\geq 0}}\frac{((2m_{1})!(2m_{2})!)^{2}}{(2\pi)^{2}2^{2m}(2m_{1}-1)!(2m_{2})!(m_{1}!)^{2}(m_{2}!)^{2}}.
    \end{align}	
\end{proposition}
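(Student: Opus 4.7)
The plan is to imitate the strategy of Proposition~\ref{pro3.1}: apply the decomposition~\eqref{V} with $d=2$, $|k|=1$, denominator exponent $m+d/2=m+1$ and numerator exponent $2m-1$ to write
\[
V^{(2m-1,2)}(\epsilon)=2\beta_{2m-1,2}\bigl[V_1^{(2m-1,2)}(\epsilon)+V_2^{(2m-1,2)}(\epsilon)\bigr],
\]
and then, for each $V_j$, change variables $(s_1,s_2,t_1,t_2)\mapsto(s_1,a,b,c)$ into the consecutive chronological spacings (on $\mathcal{T}_{1}$: $a=s_2-s_1$, $b=t_1-s_2$, $c=t_2-t_1$; on $\mathcal{T}_{2}$: $a=s_2-s_1$, $b=t_2-s_2$, $c=t_1-t_2$), integrate out $s_1$, perform one further explicit integration in $c$ via partial fractions, rescale the remaining two variables by $\epsilon$, and read off the leading logarithm with L'H\^{o}pital's rule.

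For the case $m\geq 2$, the calculation is structurally identical to that of Proposition~\ref{pro3.1} with the half-integer exponents replaced by integer ones. Both $V_1^{(2m-1,2)}(\epsilon)$ and $V_2^{(2m-1,2)}(\epsilon)$ split into a finite sum of terms; in each sum a single term of the form $C\int_0^{1/\epsilon} y^{2m-1}(y+1)^{-2m}\,dy\sim C\log(1/\epsilon)$ survives, while all other pieces are $o(\log(1/\epsilon))$, the residual genuinely two-dimensional term being controlled by the same kind of lemma that handles the analogous residual in Proposition~\ref{pro3.1}. Evaluating the constants is expected to yield $V_1^{(2m-1,2)}(\epsilon)/\log(1/\epsilon)\to 1/m^{2}$ and $V_2^{(2m-1,2)}(\epsilon)/\log(1/\epsilon)\to 1/(m(m-1))$, whose sum is $(2m-1)/(m^{2}(m-1))$; multiplication by $2\beta_{2m-1,2}$ produces the claimed limit.

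For the case $m=1$, the growth rate jumps from $\log(1/\epsilon)$ to $(\log(1/\epsilon))^{2}$, and only $V_2^{(1,2)}$ contributes at this higher order. Integrating out $s_1$ and integrating along the one-dimensional simplex $\{a,c\geq 0,\,a+c=u\}$ (which yields the measure $u\,du$) gives
\[
V_2^{(1,2)}(\epsilon)=\int_{u+b<1}\frac{u(1-u-b)b}{(u+b+\epsilon)^{2}(b+\epsilon)^{2}}\,du\,db,
\]
and the rescaling $(u,b)=(\epsilon x,\epsilon y)$ turns this into an integral of $xy(1-\epsilon(x+y))(x+y+1)^{-2}(y+1)^{-2}$ over $\{x+y<1/\epsilon\}$. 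Doing the $x$-integration first produces a $\log((1/\epsilon+1)/(y+1))$ factor, and the subsequent $y$-integration against $y/(y+1)^{2}$ generates a second $\log(1/\epsilon)$, with overall leading coefficient $1/2$. A parallel reduction for $V_1^{(1,2)}$ shows that the denominator factors $(a+b+\epsilon)^{2}(b+c+\epsilon)^{2}$ share no coordinate admitting a double logarithmic blow-up, so $V_1^{(1,2)}(\epsilon)=O(\log(1/\epsilon))=o((\log(1/\epsilon))^{2})$ is negligible. Combining, $V^{(1,2)}(\epsilon)/(\log(1/\epsilon))^{2}\to 2\beta_{1,2}\cdot\tfrac{1}{2}=\beta_{1,2}=1/(4\pi^{2})$.

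The main obstacle is the bookkeeping in the case $m\geq 2$: as in Proposition~\ref{pro3.1} one must track five to eight terms produced by the partial-fraction decompositions, verify term by term that all but the specified contribution vanish after normalization by $\log(1/\epsilon)$, and check that the coefficients of the two surviving pieces add to $(2m-1)/(m^{2}(m-1))$. The subtlety in the case $m=1$ is conceptual rather than computational: one has to recognise that only the nested ordering $\mathcal{T}_{2}$ produces the $(\log(1/\epsilon))^{2}$-growth and that the prefactor $1/2$ emerges from the composition of two nested logarithmic integrations.
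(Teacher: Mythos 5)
Your proposal is correct and follows essentially the same route as the paper: the decomposition \eqref{V} into $V_1^{(2m-1,2)}+V_2^{(2m-1,2)}$ over $\mathcal{T}_1$ and $\mathcal{T}_2$, the change to consecutive spacings, explicit integration plus rescaling by $\epsilon$ and L'H\^opital, with the same limits $1/m^2$ and $1/(m(m-1))$ for $m\geq2$ and the identification of $\mathcal{T}_2$ as the sole source of the $(\log(1/\epsilon))^2$ growth with coefficient $\tfrac12$ when $m=1$. The only bookkeeping point you omit is that the partial-fraction step in $V_2^{(2m-1,2)}$ produces coefficients with a factor $1/(m-2)$ (a logarithm appears instead of a power when $m=2$), which is why the paper treats $m=2$ by a separate explicit computation before running the general-$m$ argument for $m\geq3$; the final formula $1/m^2+1/(m(m-1))$ is nonetheless valid at $m=2$, as your plan asserts.
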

\begin{proof} For $d=2$ and $|k|=1$, it follows immediately from (\ref{V}) that
\begin{align*}
V^{(2m-1,2)}(\epsilon)&=2\beta_{2m-1,2}\int_{\mathcal{T}_1}\frac{(t_1-s_2)^{2m-1}}{(t_{1}-s_{1}+\epsilon)^{m+1}(t_{2}-s_{2}+\epsilon)^{m+1}}ds_{1}dt_{1}ds_{2}dt_{2}\\
&\quad +2\beta_{2m-1,2}\int_{\mathcal{T}_2}\frac{(t_2-s_2)^{2m-1}}{(t_{1}-s_{1}+\epsilon)^{m+1}(t_{2}-s_{2}+\epsilon)^{m+1}}ds_{1}dt_{1}ds_{2}dt_{2}\\
&=:2\beta_{2m-1,2}\Big[V_{1}^{(2m-1,2)}(\epsilon)+V_{2}^{(2m-1,2)}(\epsilon)\Big],
\end{align*}
where the multiindex $k=(1,0)$ (see Remark~\ref{re1-3-13}) and
    \begin{align*}
    	\beta_{2m-1,2}:=\sum_{\substack{m_{1}+m_{2}=m\\m_1\geq 1,\  m_2\geq 0}}\frac{((2m_{1})!(2m_{2})!)^{2}}{(2\pi)^{2}2^{2m}(2m_{1}-1)!(2m_{2})!(m_{1}!)^{2}(m_{2}!)^{2}}.
    \end{align*}
    For $m=1$, making the change of variables $(s_{1},s_{2},t_{1},t_{2})=(s_{1},a:=s_{2}-s_{1},b:=t_{1}-s_{2},c:=t_{2}-t_{1})$, we have
    \begin{align*}
    	 V_{1}^{(1,2)}(\epsilon)&=\int_{\mathcal{T}_{1}}\frac{(t_1-s_2)}{(t_{1}-s_{1}+\epsilon)^{2}(t_{2}-s_{2}+\epsilon)^{2}}ds_{1}dt_{1}ds_{2}dt_{2}\\
    	&=\int_{0<s_{1}+a+b+c<1}\frac{b}{(a+b+\epsilon)^{2}(b+c+\epsilon)^{2}}ds_{1}dadbdc\\
    	&=\int_{0<s_{1}+b<1}\Big[\frac{b}{(b+\epsilon)^{2}}-\frac{b}{(b+\epsilon)(1-s_{1}+\epsilon)}\Big]ds_{1}db\\
    	&\quad -\int_{0<a+b<1}\frac{b\log(1-a+\epsilon)}{(a+b+\epsilon)^{2}}dadb +\int_{0<a+b<1}\frac{b\log(b+\epsilon)}{(a+b+\epsilon)^{2}}dadb\\
    	 &=\int_{0}^{1}\frac{b}{(b+\epsilon)^{2}}db-\int_{0}^{1}\frac{b^{2}}{(b+\epsilon)^{2}}db-\int_{0}^{1}\frac{b\log(1+\epsilon)}{b+\epsilon}db+\int_{0}^{1}\frac{b\log(b+\epsilon)}{b+\epsilon}db\\
    	&\quad -\int_{0}^{1}\log(1-a+\epsilon)\log(1+\epsilon)da-\int_{0}^{1}\frac{\log(1-a+\epsilon)(a+\epsilon)}{(1+\epsilon)}da\\
    	&\quad +\int_{0}^{1}\log(1-a+\epsilon)\log(a+\epsilon)da+\int_{0}^{1}\log(1-a+\epsilon)da\\
    	&\quad +\int_{0}^{1}\frac{b\log(b+\epsilon)}{b+\epsilon}db-\int_{0}^{1}\frac{b\log(b+\epsilon)}{1+\epsilon}db\\
    	&=\sum_{i=1}^{10}K_{i}^{(1,2)}(\epsilon).
    \end{align*}
    By L'H\^{o}pital's rule, thus
    \begin{align*}    	
    	\lim_{\epsilon\rightarrow0}\frac{K_{1}^{(1,2)}(\epsilon)}{\log (1/\epsilon)}&=\lim_{\epsilon\rightarrow0}\frac{\int_{0}^{1}\frac{b}{(b+\epsilon)^{2}}db}{\log(1/\epsilon)}
    =\lim_{\epsilon\rightarrow0}\frac{\int_{0}^{1/\epsilon}\frac{b}{(b+1)^{2}}db}{\log(1/\epsilon)}=\lim_{N\rightarrow\infty}\frac{N^2}{(N+1)^2}=1,
    \end{align*}
    and
    \begin{align*}
    	\lim_{\epsilon\rightarrow0} \frac{K_{i}^{(1,2)}(\epsilon)}{\log(1/\epsilon)}=0,\ \  \text{ }\ \ i=2,\dots,10.
    \end{align*}
   Hence,
    \begin{align}\label{sec3-eq3.9}
    	\lim_{\epsilon\rightarrow0}\frac{V_{1}^{(1,2)}(\epsilon)}{(\log(1/\epsilon))^{2}}=0.
    \end{align}
    Similarly, changing the variables $(s_{1},s_{2},t_{2},t_{1})=(s_{1},a:=s_{2}-s_{1},b:=t_{2}-s_{2},c:=t_{1}-t_{2})$, we have for the term $V_{2}^{(1,2)}(\epsilon)$,
   \begin{align*}
   	 V_{2}^{(1,2)}(\epsilon)=&\int_{\mathcal{T}_{2}}\frac{(t_2-s_2)}{(t_{1}-s_{1}+\epsilon)^{2}(t_{2}-s_{2}+\epsilon)^{2}}ds_{1}dt_{1}ds_{2}dt_{2}\\
   	=&\int_{0<s_{1}+a+b+c<1}\frac{b}{(a+b+c+\epsilon)^{2}(b+\epsilon)^{2}}ds_{1}dadbdc\\
   	=&\int_{0<s_{1}+b<1}\frac{b\log(1-s_{1}+\epsilon)}{(b+\epsilon)^{2}}ds_{1}db-\int_{0<s_{1}+b<1}\frac{b\log(b+\epsilon)}{(b+\epsilon)^{2}}ds_{1}db\\
   	&\ \ -\int_{0<a+b<1}\frac{b\log(1+\epsilon)}{(b+\epsilon)^{2}}ds_{1}db-\int_{0<s_{1}+b<1}\frac{b\log(a+b+\epsilon)}{(b+\epsilon)^{2}}dadb\\
   	 =&2\int_{0}^{1}\frac{b(1+\epsilon)\log(1+\epsilon)}{(b+\epsilon)^{2}}db-\int_{0}^{1}\frac{b(1+\epsilon)}{(b+\epsilon)^{2}}db-2\int_{0}^{1}\frac{b\log(b+\epsilon)}{b+\epsilon}db\\
   	 &\ \ +\int_{0}^{1}\frac{b}{b+\epsilon}db  	 -\int_{0}^{1}\frac{b\log(b+\epsilon)}{(b+\epsilon)^{2}}db+\int_{0}^{1}\frac{b^{2}\log(b+\epsilon)}{(b+\epsilon)^{2}}db\\
   	 &\ \ -\int_{0}^{1}\frac{b\log(1+\epsilon)}{(b+\epsilon)^{2}}db-\int_{0}^{1}\frac{b^{2}\log(1+\epsilon)}{(b+\epsilon)^{2}}db\\
   	:=&\sum_{i=1}^{8}G_{i}^{(1,2)}(\epsilon).
   \end{align*}
   It is obvious that
  \begin{align*}
  \lim_{\epsilon\rightarrow0} \frac{G_{i}^{(1,2)}(\epsilon)}{(\log(1/\epsilon))^{2}}=0,\ \  \text{ }\ \ i=1,2,3,4,6,7,8,
   \end{align*}
and
   \begin{align*}
    \lim_{\epsilon\rightarrow0} \frac {G_{5}^{(1,2)}(\epsilon)}{(\log(1/\epsilon))^2}&= \lim_{\epsilon\rightarrow0}\frac{-\int_{0}^{1}\frac{b\log(b+\epsilon)}{(b+\epsilon)^{2}}db}{(\log(1/\epsilon))^2}\\
    &=\lim_{\epsilon\rightarrow0}\frac{-\log \epsilon \int^{\frac{1}{\epsilon}}_0\frac{b}{(b+1)^2}db}{(\log(1/\epsilon))^2}-\lim_{\epsilon\rightarrow0}
    \frac{\int^{\frac{1}{\epsilon}}_0\frac{b\log(b+1)}{(b+1)^2}db}{(\log(1/\epsilon))^2}\\
    &=1-\frac{1}{2}=\frac{1}{2}.
   \end{align*}
We then have
   \begin{align}\label{sec3-eq3.10}
   		\lim_{\epsilon\rightarrow0}\frac{1}{(\log(1/\epsilon))^{2}}V_{2}^{(1,2)}(\epsilon)=\frac{1}{2}.
   \end{align}
   It follows from~\eqref{sec3-eq3.9} and \eqref{sec3-eq3.10} that
   $$
   \lim_{\epsilon\rightarrow 0}\left(\log\frac{1}{\epsilon}\right)^{-2}V^{(1,2)}(\epsilon)=2\beta_{1,2}\times\frac{1}{2}=\frac{1}{4\pi^{2}}.
   $$

   Similarly, one can write for $m=2$,
   \begin{align*}
   	 V_{1}^{(3,2)}(\epsilon)&=\int_{0}^{1}\frac{b^{3}}{2^{2}(b+\epsilon)^{4}}db-\int_{0}^{1}\frac{b^{4}}{2^{2}(b+\epsilon)^{4}}db-\int_{0}^{1}\frac{b^{3}}{2^{2}(b+\epsilon)^{3}}db\\
    &\quad+\int_{0}^{1}\frac{b^{3}}{2^{2}(b+\epsilon)^{2}(1+\epsilon)}db
    -\int_{0}^{1}\frac{b^{3}}{2^{2}(b+\epsilon)^{3}}db+\int_{0}^{1}\frac{b^{3}}{2^{2}(b+\epsilon)(1+\epsilon)^{2}}\\
     &\quad+\int_{0<a+b<1}\frac{b^{3}}{2(a+b+\epsilon)^{3}(1-a+\epsilon)}dadb,
   \end{align*}
   and
   \begin{align*}
   	 V_{2}^{(3,2)}(\epsilon)&=\int_{0}^{1}\frac{b^{3}}{2(b+\epsilon)^{4}}db-\int_{0}^{1}\frac{b^{4}}{2(b+\epsilon)^{4}}db-\int_{0}^{1}\frac{b^{3}\log(1+\epsilon)}{2(b+\epsilon)^{3}}db\\
     &\quad+\int_{0}^{1}\frac{b^{3}}{2(b+\epsilon)^{3}}db
   	 -\int_{0}^{1}\frac{b^{3}\log(1+\epsilon)}{2(b+\epsilon)^{3}}db+\int_{0}^{1}\frac{b^{3}\log(b+\epsilon)}{2(b+\epsilon)^{3}}db\\
    &\quad+\int_{0}^{1}\frac{b^{3}}{2(b+\epsilon)^{3}(1+\epsilon)}db
   	-\int_{0}^{1}\frac{b^{4}}{2(b+\epsilon)^{3}(1+\epsilon)}db.\\
   \end{align*}
Hence,
\begin{equation}\label{sec3-eq3.11}
\begin{split}
 &\lim_{\epsilon\rightarrow 0}\frac{1}{\log(1/\epsilon)}\left(V^{(3,2)}_1(\epsilon)+V^{(3,2)}_2(\epsilon)\right)\\
 =&\lim_{\epsilon\rightarrow 0}\frac{1}{\log(1/\epsilon)}\left(\int_{0}^{1}\frac{b^{3}}{2^{2}(b+\epsilon)^{4}}db+\int_{0}^{1}\frac{b^{3}}{2(b+\epsilon)^{4}}db\right)\\
 =&\lim_{\epsilon\rightarrow 0}\frac{1}{\log(1/\epsilon)}\left(\int_{0}^{1/\epsilon}\frac{b^{3}}{2^{2}(b+1)^{4}}db+\int_{0}^{1/\epsilon}\frac{b^{3}}{2(b+1)^{4}}db\right)\\
 =&\frac{1}{2^2}+\frac{1}{2}=\frac{3}{4}.
 \end{split}
\end{equation}

For $m\geq3$, with the same arguments, we have
   \begin{align*}
   	 V_{1}^{(2m-1,2)}(\epsilon)=&\frac{1}{m^{2}}\int_{0}^{1}\frac{b^{2m-1}}{(b+\epsilon)^{2m}}db-\frac{1}{m^{2}}\int_{0}^{1}\frac{b^{2m}}{(b+\epsilon)^{2m}}db\\
     &-\frac{2}{m^{2}(m-1)}\int_{0}^{1}\frac{b^{2m-1}}{(b+\epsilon)^{2m-1}}db\\
   	&
   	+
   	 \frac{1}{m^{2}(m-1)(1+\epsilon)^{m}}\int_{0}^{1}\frac{b^{2m-1}}{(b+\epsilon)^{m}}db\\
     &+\frac{1}{m^{2}(m-1)(1+\epsilon)^{m}}\int_{0}^{1}\frac{b^{2m-1}}{(b+\epsilon)^{m-1}}db\\
   	&+\frac{1}{m(m-1)}\int_{0<a+b<1}\frac{b^{2m-1}}{(a+b+\epsilon)^{m+1}(1-a+\epsilon)^{m-1}}dadb\\
   	:=&\sum_{i=1}^{6}K_{i}^{(2m-1,2)}(\epsilon).
   \end{align*}
   By elementary calculations, we have
   \begin{align*}
   \lim_{\epsilon\rightarrow 0}\frac{K_{1}^{(2m-1,2)}(\epsilon)}{\log(1/\epsilon)}	=\frac{1}{m^{2}},
   \end{align*}
   and
  $$
   \lim_{\epsilon\rightarrow 0}\frac{K_{i}^{(2m-1,2)}(\epsilon)}{\log(1/\epsilon)}=0,
   $$
   for $i=2,\cdots,6$, which implies that
   \begin{align}\label{sec3-eq3.12}
   	\lim_{\epsilon\rightarrow0}\frac{V_{1}^{(2m-1,2)}(\epsilon)}{\log(1/\epsilon)}=\frac{1}{m^{2}}.
   \end{align}
   For the term $V_{2}^{(2m-1,2)}(\epsilon)$, we deduce that
   \begin{align*}
    &V_{2}^{(2m-1,2)}(\epsilon)\\
    &=\frac{1}{m(m-1)}\left[\int_{0}^{1}\frac{b^{2m-1}}{(b+\epsilon)^{2m}}db-\int_{0}^{1}\frac{b^{2m}}{(b+\epsilon)^{2m}}db\right.\\
   	&\quad -\frac{1}{(m-2)}\int_{0}^{1}\frac{b^{2m-1}}{(b+\epsilon)^{2m-1}}db+
   	\frac{1}{(m-2)(1+\epsilon)^{m-2}}\int_{0}^{1}\frac{b^{2m-1}}{(b+\epsilon)^{m+1}}db\\
   	&\quad
   	 -\frac{1}{(1+\epsilon)^{m-1}}\int_{0}^{1}\frac{b^{2m-1}}{(b+\epsilon)^{m+1}}db+\frac{1}{(1+\epsilon)^{m-1}}\int_{0}^{1}\frac{b^{2m}}{(b+\epsilon)^{m+1}}db\\
   	 &\left.\quad +\frac{1}{(m-2)}\int_{0}^{1}\frac{b^{2m-1}}{(b+\epsilon)^{2m-1}}db-\frac{1}{(m-2)(1+\epsilon)^{m-2}}\int_{0}^{1}\frac{b^{2m-1}}{(b+\epsilon)^{m+1}}db\right]\\
   	:&=\sum_{i=1}^{6}\frac{1}{m(m-1)}G_{i}^{(2m-1,2)}(\epsilon).
   \end{align*}
   It is not hard to see that
   \begin{align*}
    \lim_{\epsilon\rightarrow 0}\frac{G_{1}^{(2m-1,2)}(\epsilon)}{\log(1/\epsilon)}=1,
   \end{align*}
   and
   $$
    \lim_{\epsilon\rightarrow0}\frac{G_{i}^{(2m-1,2)}}{\log(1/\epsilon)}=0,
    $$
   for $i=2,\cdots,8$, which leads to
   \begin{align}\label{sec3-eq3.13}
   	\lim_{\epsilon\rightarrow0}\frac{V_{2}^{(2m-1,2)}(\epsilon)}{\log(1/\epsilon)}=\frac{1}{m(m-1)}.
   \end{align}
Together with~\eqref{sec3-eq3.11}-\eqref{sec3-eq3.13}, one can conclude that for $m\geq2$
    \begin{align*}
    	\lim_{\epsilon\rightarrow 0}\left(\log\frac{1}{\epsilon}\right)^{-1}V^{(2m-1,2)}(\epsilon)=2\beta_{2m-1,2}\left[\frac{1}{m^{2}}+\frac{1}{m(m-1)}\right]
    =\frac{2\beta_{2m-1,2}(2m-1)}{m^2(m-1)}.
    \end{align*}

\end{proof}

\begin{proposition}\label{pro3.3}
Let $d\geq2$ and $d+|k|\geq4$. We have for $2m>|k|$,

\begin{align*}
	\lim_{\epsilon\rightarrow 0}\epsilon^{d+|k|-3}V^{(2m-|k|,d)}(\epsilon)=\sigma_{2m-|k|,d}^{2},
	\end{align*}
	where
	\begin{align*}
	\sigma_{2m-|k|,d}^{2}=\beta_{2m-|k|,d}\phi(m,d,|k|),
	\end{align*}
$\beta_{2m-|k|,d}$ is defined in (\ref{beta}) and	
	\begin{align}\label{phi}
	 \phi(m,d,|k|):=\frac{2\Gamma(2m-|k|+1)\Gamma(d+|k|-3)}{\Gamma(2m+d-2)(m+\frac{d}{2}-1)}\left(\frac{1}{m+\frac{d}{2}-1}+\frac{1}{m+\frac{d}{2}-2}\right).
	\end{align}
\end{proposition}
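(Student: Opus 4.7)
The plan is to evaluate the limits of $V_1^{(2m-|k|,d)}(\epsilon)$ and $V_2^{(2m-|k|,d)}(\epsilon)$ separately via a change of variables followed by an $\epsilon$-rescaling, and then combine them using the decomposition in~\eqref{V}. Concretely, for $V_1$ I would use the change of variables $(s_1,s_2,t_1,t_2)\mapsto(s_1,a,b,c)$ with $a=s_2-s_1$, $b=t_1-s_2$, $c=t_2-t_1$, and for $V_2$ the analogous $a=s_2-s_1$, $b=t_2-s_2$, $c=t_1-t_2$. In either case the integrand becomes independent of $s_1$ on the simplex $\{s_1,a,b,c>0,\ s_1+a+b+c<1\}$, and integrating out $s_1$ contributes a factor $(1-a-b-c)$. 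This yields
\begin{align*}
V_1^{(2m-|k|,d)}(\epsilon)&=\int_{\{a,b,c>0,\ a+b+c<1\}}\frac{(1-a-b-c)\,b^{2m-|k|}}{(a+b+\epsilon)^{m+\frac{d}{2}}(b+c+\epsilon)^{m+\frac{d}{2}}}\,da\,db\,dc,\\
V_2^{(2m-|k|,d)}(\epsilon)&=\int_{\{a,b,c>0,\ a+b+c<1\}}\frac{(1-a-b-c)\,b^{2m-|k|}}{(a+b+c+\epsilon)^{m+\frac{d}{2}}(b+\epsilon)^{m+\frac{d}{2}}}\,da\,db\,dc.
\end{align*}

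Next I would rescale $(a,b,c)=\epsilon(x,y,z)$, with $da\,db\,dc=\epsilon^3\,dx\,dy\,dz$. Counting the powers of $\epsilon$ gives the common prefactor $\epsilon^{2m-|k|+3-2(m+d/2)}=\epsilon^{3-d-|k|}$, so multiplication by $\epsilon^{d+|k|-3}$ leaves integrands
\begin{align*}
I_1^\epsilon(x,y,z)&=\frac{(1-\epsilon(x+y+z))\,y^{2m-|k|}}{(x+y+1)^{m+\frac{d}{2}}(y+z+1)^{m+\frac{d}{2}}}\,\mathbf{1}_{\{x+y+z<1/\epsilon\}},\\
I_2^\epsilon(x,y,z)&=\frac{(1-\epsilon(x+y+z))\,y^{2m-|k|}}{(x+y+z+1)^{m+\frac{d}{2}}(y+1)^{m+\frac{d}{2}}}\,\mathbf{1}_{\{x+y+z<1/\epsilon\}}.
\end{align*}
On the indicator's support both factors $1-\epsilon(x+y+z)$ lie in $[0,1]$, so $|I_j^\epsilon|$ is pointwise dominated by the $\epsilon=0$ integrand on $(0,\infty)^3$. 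By dominated convergence, the passage $\epsilon\to0$ is justified provided the dominators are integrable, which I would verify by iterating one-dimensional integrations as in Step 3 below; here the hypotheses $d+|k|\geq4$ and $2m>|k|$ enter as $d+|k|-3\geq1>0$, $m+\frac{d}{2}-1>0$, and $m+\frac{d}{2}-2>0$.

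The limit integrals are then evaluated by Fubini in the order $x,z,y$. For $V_1$, each of the $x$- and $z$-integrals yields $(m+\frac{d}{2}-1)^{-1}(y+1)^{-(m+d/2-1)}$, and the remaining $y$-integral equals $B(2m-|k|+1,\,d+|k|-3)=\Gamma(2m-|k|+1)\Gamma(d+|k|-3)/\Gamma(2m+d-2)$; for $V_2$, the $x$-integral produces $(m+\frac{d}{2}-1)^{-1}(y+z+1)^{-(m+d/2-1)}$, the $z$-integral then gives $(m+\frac{d}{2}-2)^{-1}(y+1)^{-(m+d/2-2)}$, and the $y$-integral is the same Beta function. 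Thus
\begin{align*}
\lim_{\epsilon\to0}\epsilon^{d+|k|-3}\bigl(V_1^{(2m-|k|,d)}+V_2^{(2m-|k|,d)}\bigr)
=\frac{B(2m-|k|+1,d+|k|-3)}{m+\frac{d}{2}-1}\Bigl(\frac{1}{m+\frac{d}{2}-1}+\frac{1}{m+\frac{d}{2}-2}\Bigr).
\end{align*}
Multiplying by $2\beta_{2m-|k|,d}$ and reading off \eqref{phi}, this equals $\beta_{2m-|k|,d}\,\phi(m,d,|k|)=\sigma_{2m-|k|,d}^2$, proving the proposition.

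The main obstacle is nothing deep but rather careful book-keeping: one must track the precise exponents $m+\frac{d}{2}$, $m+\frac{d}{2}-1$, $m+\frac{d}{2}-2$ through the iterated integrations, and verify that under the stated hypotheses all of these remain strictly positive so that the Beta-type integrals converge and dominated convergence applies. The only genuinely delicate point is confirming $m+\frac{d}{2}-2>0$, which forces a small case check: for $d=2$ the requirement $d+|k|\geq4$ gives $|k|\geq2$, hence $m\geq2$; for $d\geq3$ one has $m+\frac{d}{2}-2\geq1+\frac{3}{2}-2=\frac{1}{2}>0$; so integrability holds throughout.
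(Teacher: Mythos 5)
Your proposal is correct and follows essentially the same route as the paper: the same decomposition into $V_1^{(2m-|k|,d)}$ and $V_2^{(2m-|k|,d)}$, the same change of variables $a=s_2-s_1$, $b$, $c$ with the $s_1$-integration producing the $(1-a-b-c)$ factor, the same $\epsilon$-rescaling, and the same evaluation of the limiting integrals over $\mathbb{R}_+^3$ by iterated integration into Beta functions (this is exactly the paper's Lemma~\ref{lem5.2-25}). Your explicit dominated-convergence justification and the positivity check on $m+\frac{d}{2}-2$ are welcome refinements of a step the paper passes over silently, but they do not change the argument.
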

\begin{proof}
For $d\geq2$ and $d+|k|\geq4$, by (\ref{V}), we have
\begin{align*}
V^{(2m-|k|,d)}(\epsilon)&:=2\beta_{2m-|k|,d}\left(
\int_{\mathcal{T}_1}\frac{(t_1-s_2)^{2m-|k|}}{(t_{1}-s_{1}+\epsilon)^{m+\frac{d}{2}}(t_{2}-s_{2}+\epsilon)^{m+\frac{d}{2}}}ds_{1}dt_{1}ds_{2}dt_{2}\right.\\
&\left.\qquad\quad+\int_{\mathcal{T}_2}\frac{(t_2-s_2)^{2m-|k|}}{(t_{1}-s_{1}+\epsilon)^{m+\frac{d}{2}}(t_{2}-s_{2}+\epsilon)^{m+\frac{d}{2}}}ds_{1}dt_{1}ds_{2}dt_{2}\right)
\end{align*}
where $\beta_{2m-|k|,d}$ is defined in (\ref{beta}).
	Using L'H\^{o}pital's rule, we rewrite
	\begin{equation}\label{sec3-eq3.16-25}
	\begin{split}
	&\lim_{\epsilon\rightarrow 0}\epsilon^{d+|k|-3}V^{(2m-|k|,d)}(\epsilon)\\
	&=\lim_{\epsilon\rightarrow 0}2\beta_{2m-|k|,d}\epsilon^{d+|k|-3}\int_{\mathcal{T}_1}\frac{(t_1-s_2)^{2m-|k|}}{(t_{1}-s_{1}+\epsilon)^{m+\frac{d}{2}}(t_{2}-s_{2}+\epsilon)^{m+\frac{d}{2}}}ds_{1}dt_{1}ds_{2}dt_{2}\\
    &\quad+\lim_{\epsilon\rightarrow 0}2\beta_{2m-|k|,d}\epsilon^{d+|k|-3}\int_{\mathcal{T}_2}\frac{(t_2-s_2)^{2m-|k|}}{(t_{1}-s_{1}+\epsilon)^{m+\frac{d}{2}}(t_{2}-s_{2}+\epsilon)^{m+\frac{d}{2}}}ds_{1}dt_{1}ds_{2}dt_{2}\\	 
	&=\lim_{\epsilon\rightarrow 0}2\beta_{2m-|k|,d}\epsilon\int_{\epsilon^{-1}\mathcal{T}_{1}}\frac{(t_1-s_2)^{2m-|k|}}{(t_{1}-s_{1}+1)^{m+\frac{d}{2}}(t_{2}-s_{2}+1)^{m+\frac{d}{2}}}ds_{1}dt_{1}ds_{2}dt_{2}\\
	&\quad
	+\lim_{\epsilon\rightarrow 0}2\beta_{2m-|k|,d}\epsilon\int_{\epsilon^{-1}\mathcal{T}_{2}}\frac{(t_2-s_2)^{2m-|k|}}{(t_{1}-s_{1}+1)^{m+\frac{d}{2}}(t_{2}-s_{2}+1)^{m+\frac{d}{2}}}ds_{1}dt_{1}ds_{2}dt_{2}\\
    :&=\lim_{\epsilon\rightarrow 0}2\beta_{2m-|k|,d}\epsilon\Big[\tilde{V}_{1}^{(2m-|k|,d)}(\epsilon)+\tilde{V}_{2}^{(2m-|k|,d)}(\epsilon)\Big],
	\end{split}	
	\end{equation}
	where $$\epsilon^{-1}\mathcal{T}_{1}=\left\{(s_{1},s_{2},t_{1},t_{2}):\ 0<s_{1}<s_{2}<t_{1}<t_{2}<\frac{1}{\epsilon}\right\},$$
 and
 $$\epsilon^{-1}\mathcal{T}_{2}=\left\{(s_{1},s_{2},t_{1},t_{2}):\ 0<s_{1}<s_{2}<t_{2}<t_{1}<\frac{1}{\epsilon}\right\}.$$
Making the change of variables $s_{2}-s_{1}=a$, $t_{1}-s_{2}=b$ and $t_{2}-t_{1}=c$, thus
	\begin{equation}\label{sec3-eq3.17-25}
	\begin{split}
	 \tilde{V}&_{1}^{(2m-|k|,d)}(\epsilon)\\
  &=\int_{0<s_{1}+a+b+c<\frac{1}{\epsilon}}\frac{b^{2m-|k|}}{(a+b+1)^{m+\frac{d}{2}}(b+c+1)^{m+\frac{d}{2}}}ds_{1}dadbdc\\
	&=\int_{0<a+b+c<\frac{1}{\epsilon}}\left[\frac{1}{\epsilon}-(a+b+c)\right]\frac{b^{2m-|k|}}{(a+b+1)^{m+\frac{d}{2}}(b+c+1)^{m+\frac{d}{2}}}dadbdc.
	\end{split}	
	\end{equation}
	Similarly, one can write
	\begin{equation}\label{sec3-eq3.18-25}
 \begin{split}
	 \tilde{V}&_{2}^{(2m-|k|,d)}(\epsilon)\\
  &=\int_{0<a+b+c<\frac{1}{\epsilon}}\left[\frac{1}{\epsilon}-(a+b+c)\right]\frac{b^{2m-|k|}}{(a+b+c+1)^{m+\frac{d}{2}}(b+1)^{m+\frac{d}{2}}}dadbdc.
   \end{split}
	\end{equation}
	Together with~\eqref{sec3-eq3.16-25}-\eqref{sec3-eq3.18-25}, we obtain
	\begin{align*}
	&\lim_{\epsilon\rightarrow 0}\epsilon^{d+|k|-3}V^{(2m-|k|,d)}(\epsilon)\\
    &=\lim_{\epsilon\rightarrow 0}2\beta_{2m-|k|,d}\left(\int_{0<a+b+c<\frac{1}{\epsilon}}\frac{b^{2m-|k|}}{(a+b+1)^{m+\frac{d}{2}}(b+c+1)^{m+\frac{d}{2}}}dadbdc\right.\\
	&\quad\left.+\int_{0<a+b+c<\frac{1}{\epsilon}}\frac{b^{2m-|k|}}{(a+b+c+1)^{m+\frac{d}{2}}(b+1)^{m+\frac{d}{2}}}dadbdc\right)\\
	&=2\beta_{2m-|k|,d}\left(\int_{\mathbb{R}_{+}^{3}}\frac{b^{2m-|k|}}{(a+b+1)^{m+\frac{d}{2}}(b+c+1)^{m+\frac{d}{2}}}dadbdc\right.\\
	&\quad +\left.\int_{\mathbb{R}_{+}^{3}}\frac{b^{2m-|k|}}{(a+b+c+1)^{m+\frac{d}{2}}(b+1)^{m+\frac{d}{2}}}dadbdc\right).
	\end{align*}
	Then it follows from Lemma \ref{lem5.2-25} that
	 \begin{align*}
	 	&\lim_{\epsilon\rightarrow 0}\epsilon^{d+|k|-3}V^{(2m-|k|,d)}(\epsilon)\\
        &=2\beta_{2m-|k|,d}\left(\frac{\Gamma(2m-|k|+1)\Gamma(d+|k|-3)}{(m+\frac{d}{2}-1)^{2}\Gamma(2m+d-2)}\right.\\
	 	&\ \ \ \ \ \ \quad\quad +\left.\frac{\Gamma(2m-|k|+1)\Gamma(d+|k|-3)}{(m+\frac{d}{2}-1)(m+\frac{d}{2}-2)\Gamma(2m+d-2)}\right)\\
	 	&=\beta_{2m-|k|,d}\phi(m,d,|k|),
	 \end{align*}
     where
     \begin{align*}
	 \phi(m,d,|k|):=\frac{2\Gamma(2m-|k|+1)\Gamma(d+|k|-3)}{\Gamma(2m+d-2)(m+\frac{d}{2}-1)}\left(\frac{1}{m+\frac{d}{2}-1}+\frac{1}{m+\frac{d}{2}-2}\right),
	\end{align*}
and $\Gamma(\cdot)$ is the Gamma function.
\end{proof}

Using the same arguments as in Proposition~\ref{pro3.3}, we can describe the precise asymptotic behavior of the variance of $I_{2m-|k|}(f_{2m-|k|,d,\epsilon})$ for $d=1$ and $|k|\geq3$.
\begin{proposition}\label{pro3.4}
Let $d=1$ and $|k|\geq3$. We have for $2m>|k|$,

\begin{align*}
	\lim_{\epsilon\rightarrow 0}\epsilon^{|k|-2}V^{(2m-|k|,1)}(\epsilon)=\frac{(2m-1)!(k-3)!}{\pi 2^{2m-3}((m-1)!)^{2}}\left(\frac{1}{m-\frac{1}{2}}+\frac{1}{m-\frac{3}{2}}\right).
	\end{align*}
\end{proposition}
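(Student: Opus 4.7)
The plan is to adapt the argument of Proposition~\ref{pro3.3} line for line, with $d=1$, keeping track of where the restriction $|k|\geq 3$ comes in. Write $|k|=k$ throughout, as in Remark~\ref{re1-3-13}. Starting from~\eqref{V} specialized to $d=1$, we have
\begin{align*}
V^{(2m-k,1)}(\epsilon)=2\beta_{2m-k,1}\Bigl[V_{1}^{(2m-k,1)}(\epsilon)+V_{2}^{(2m-k,1)}(\epsilon)\Bigr],
\end{align*}
where the two pieces are integrals over $\mathcal{T}_{1}$ and $\mathcal{T}_{2}$ with denominator $(t_{1}-s_{1}+\epsilon)^{m+\frac{1}{2}}(t_{2}-s_{2}+\epsilon)^{m+\frac{1}{2}}$, and
\begin{align*}
\beta_{2m-k,1}=\frac{((2m)!)^{2}}{2\pi\cdot 2^{2m}(2m-k)!(m!)^{2}}.
\end{align*}

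First, I would rescale $(s_{1},s_{2},t_{1},t_{2})\mapsto \epsilon(\tilde{s}_{1},\tilde{s}_{2},\tilde{t}_{1},\tilde{t}_{2})$. A dimension count (Jacobian $\epsilon^{4}$, denominator $\epsilon^{2m+1}$, numerator $\epsilon^{2m-k}$) gives the scaling $V_{i}^{(2m-k,1)}(\epsilon)=\epsilon^{3-k}\,\tilde{V}_{i}^{(2m-k,1)}(\epsilon)$ with the domains rescaled to $\epsilon^{-1}\mathcal{T}_{i}$ and the $+\epsilon$ inside the parentheses replaced by $+1$. Then I would switch to the coordinates $(s_{1},a=s_{2}-s_{1},b=t_{1}-s_{2},c=t_{2}-t_{1})$ (respectively with $c=t_{1}-t_{2}$ for $\mathcal{T}_{2}$), and integrate out $s_{1}$, which produces a factor $[\tfrac{1}{\epsilon}-(a+b+c)]$. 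Multiplying by $\epsilon^{k-2}$ exactly cancels the $\epsilon^{3-k}$ and leaves the prefactor $\epsilon[\tfrac{1}{\epsilon}-(a+b+c)]\to 1$, so
\begin{align*}
\lim_{\epsilon\to 0}\epsilon^{k-2}V^{(2m-k,1)}(\epsilon)
&=2\beta_{2m-k,1}\int_{\mR_{+}^{3}}\frac{b^{2m-k}}{(a+b+1)^{m+\frac{1}{2}}(b+c+1)^{m+\frac{1}{2}}}da\,db\,dc\\
&\quad+2\beta_{2m-k,1}\int_{\mR_{+}^{3}}\frac{b^{2m-k}}{(a+b+c+1)^{m+\frac{1}{2}}(b+1)^{m+\frac{1}{2}}}da\,db\,dc,
\end{align*}
provided the two limit integrals are finite.

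The main obstacle is justifying this passage to the limit and checking integrability; this is precisely where the hypothesis $k\geq 3$ enters. Integrating out $a$ and $c$ against $(a+b+1)^{-m-1/2}$ and $(b+c+1)^{-m-1/2}$ yields an integrand of order $b^{2m-k}/(b+1)^{2m-1}\sim b^{1-k}$ at infinity, which is integrable iff $k>2$, i.e.\ $k\geq 3$. Once this is established, dominated convergence (using a uniform envelope independent of $\epsilon$) delivers the displayed identity, and Lemma~\ref{lem5.2-25} applied with $d=1$ evaluates both limit integrals to
\begin{align*}
\frac{\Gamma(2m-k+1)\Gamma(k-2)}{(m-\tfrac{1}{2})^{2}\,\Gamma(2m-1)}
\quad\text{and}\quad
\frac{\Gamma(2m-k+1)\Gamma(k-2)}{(m-\tfrac{1}{2})(m-\tfrac{3}{2})\,\Gamma(2m-1)},
\end{align*}
so that their sum equals $\phi(m,1,k)/2$ in the notation of~\eqref{phi}.

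Finally, I would collect constants. Using $\Gamma(2m-k+1)=(2m-k)!$, $\Gamma(k-2)=(k-3)!$, $\Gamma(2m-1)=(2m-2)!$, together with the identities $(2m)!=2m(2m-1)!$, $m!=m(m-1)!$, and $(2m-1)!=(2m-1)(2m-2)!$, one finds
\begin{align*}
\frac{((2m)!)^{2}}{(m!)^{2}(2m-2)!(m-\tfrac{1}{2})}=\frac{8(2m-1)!}{((m-1)!)^{2}},
\end{align*}
which collapses $2\beta_{2m-k,1}\phi(m,1,k)$ to
\begin{align*}
\frac{(2m-1)!(k-3)!}{\pi\,2^{2m-3}((m-1)!)^{2}}\left(\frac{1}{m-\tfrac{1}{2}}+\frac{1}{m-\tfrac{3}{2}}\right),
\end{align*}
completing the proof. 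The algebraic simplification is routine; the only non-mechanical step is the integrability argument above.
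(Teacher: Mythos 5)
Your proposal is correct and is essentially the paper's own proof: the paper disposes of this proposition in one line by saying it follows from ``the same arguments as in Proposition~\ref{pro3.3}'' specialized to $d=1$, which is exactly what you carry out (including the correct observation that Lemma~\ref{lem5.2-25}, and the integrability of $b^{1-k}$ at infinity, is where $|k|\geq 3$ is used). One cosmetic slip: since the two limit integrals sum to $\phi(m,1,k)/2$ and the prefactor from~\eqref{V} is $2\beta_{2m-k,1}$, the quantity you are evaluating is $\beta_{2m-k,1}\phi(m,1,k)$ rather than $2\beta_{2m-k,1}\phi(m,1,k)$; your final displayed constant is nonetheless the correct one.
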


\subsection{The proof of Theorem~\ref{th1}}\label{sec3.2}

\begin{proof}[Proof of (i)] For $d=1$ and $|k|=2$, it is sufficient to prove that
\begin{align*}
\lim\limits_{\epsilon\to 0}\left(\log\frac{1}{\epsilon}\right)^{-1}E\Big[\left(\alpha^{(|k|)}_d(\epsilon)-E[\alpha^{(|k|)}_d(\epsilon)]\right)^2\Big]=\infty.
\end{align*}
By Proposition~\ref{pro3.1} and Fatou's lemma, we can write
\begin{align*}
&\lim\limits_{\epsilon\to 0}\left(\log\frac{1}{\epsilon}\right)^{-1}E\Big[\left(\alpha^{(|k|)}_d(\epsilon)-E[\alpha^{(|k|)}_d(\epsilon)]\right)^2\Big]\\
&=\lim\limits_{\epsilon\to 0}\left(\log\frac{1}{\epsilon}\right)^{-1}\sum^\infty_{m=2}E[I_{2m-2}(f_{2m-2,1,\epsilon})^{2}]\\
&\geq \sum^\infty_{m=2}\liminf_{\epsilon\to 0}\left(\log\frac{1}{\epsilon}\right)^{-1}V^{(2m-2,1)}(\epsilon)\\
&=\frac{8}{\pi}\sum^\infty_{m=0}\frac{(2m)!}{2^{2m}(m!)^2},
\end{align*}
and $\sum^\infty_{m=2}\frac{(2m)!}{2^{2m}(m!)^2}=\infty$ by Stirling's formula.

\end{proof}

\begin{proof}[Proof of (ii)] It follows from Proposition~\ref{pro3.2} that
\begin{align*}
&\lim\limits_{\epsilon\to 0}\left(\log\frac{1}{\epsilon}\right)^{-2}E\Big[\left(\alpha^{(|k|)}_d(\epsilon)-E[\alpha^{(|k|)}_d(\epsilon)]\right)^2\Big]\\
&=\lim\limits_{\epsilon\to 0}\left(\log\frac{1}{\epsilon}\right)^{-2}\sum^\infty_{m=1}E[I_{2m-1}(f_{2m-1,2,\epsilon})^{2}]\\
&=\lim\limits_{\epsilon\to 0}\left(\log\frac{1}{\epsilon}\right)^{-2}V^{(1,2)}(\epsilon)\\
&=\frac{1}{4\pi^2},
\end{align*}
for $d=2$ and $|k|=1$. Indeed, the term $\left(\log\frac{1}{\epsilon}\right)^{-2}\sum^\infty_{m=2}E[I_{2m-1}(f_{2m-1,2,\epsilon})^{2}]$ converges to zero as $\epsilon$ tends to zero.
\end{proof}

\begin{proof}[Proof of (iii)]For $d=1$ and $|k|\geq3$, from Proposition~\ref{pro3.4} and Fatou's lemma, we have
\begin{align*}
&\lim\limits_{\epsilon\to 0}\epsilon^{d+|k|-3}E\Big[\left(\alpha^{(|k|)}_d(\epsilon)-E[\alpha^{(|k|)}_d(\epsilon)]\right)^2\Big]\\
&=\lim\limits_{\epsilon\to 0}\epsilon^{|k|-2}\sum_{m>\frac{|k|}{2}}E[I_{2m-|k|}(f_{2m-|k|,1,\epsilon})^{2}]\\
&\geq\sum_{m>\frac{|k|}{2}}\frac{(2m-1)!(k-3)!}{2^{2m-3}\pi((m-1)!)^{2}}\frac{2}{m-\frac{1}{2}}.
\end{align*}
By Stirling's formula, it is obvious that $\sum_{m>\frac{|k|}{2}}\frac{(2m-1)!(k-3)!}{2^{2m-3}\pi((m-1)!)^{2}}\frac{2}{m-\frac{1}{2}}$ is divergent. Similarly, we have for $d=2$ and $|k|\geq2$
 \begin{align*}
&\lim\limits_{\epsilon\to 0}\epsilon^{d+|k|-3}E\Big[\left(\alpha^{(|k|)}_d(\epsilon)-E[\alpha^{(|k|)}_d(\epsilon)]\right)^2\Big]\\
&=\lim\limits_{\epsilon\to 0}\epsilon^{|k|-1}\sum_{m>\frac{|k|}{2}}E[I_{2m-|k|}(f_{2m-|k|,2,\epsilon})^{2}]\\
&\geq\sum_{m>\frac{|k|}{2}}\beta_{2m-|k|,2}\phi(m,2,|k|).
\end{align*}
Using Lemma~\ref{lem5.3-7-26} and (\ref{phi}), we have the following fact that
$$
\beta_{2m-|k|,2}=Cm^{|k|}+O(m^{|k|-1}),
$$
and
$$
\lim_{m\rightarrow\infty}\frac{\beta_{2m-|k|,2}\phi(m,2,|k|)}{\frac{1}{m}}=C.
$$
Hence $$\sum_{m>\frac{|k|}{2}}\beta_{2m-|k|,2}\phi(m,2,|k|)=\infty,$$
and the assertion $(iii)$ in Theorem~\ref{th1} follows.
\end{proof}

\begin{proof}[Proof of (iv)] By Proposition~\ref{pro3.3}, it is enough to show
$$
\sum_{m>\frac{|k|}{2}}\beta_{2m-|k|,d}\phi(m,d,|k|)<\infty.
$$
Together with Lemma~\ref{lem5.3-7-26} and (\ref{phi}), thus
$$
\lim_{m\rightarrow\infty}\frac{\beta_{2m-|k|,d}\phi(m,d,|k|)}{\left(\frac{1}{m}\right)^{\frac{d}{2}}}\le C.
$$
For $d\geq3$, it is obvious that $\sum_{m>\frac{|k|}{2}}\left(\frac{1}{m}\right)^{\frac{d}{2}}$ is convergent. Hence, $$\sum_{m>\frac{|k|}{2}}\beta_{2m-|k|,d}\phi(m,d,|k|)<\infty.$$
\end{proof}

\section{The central limit theorems for $I_{2m-|k|}(f_{2m-|k|,d,\epsilon})$ by renormalization}
 Recall that the Wiener chaos expansion of $\alpha_{d}^{(|k|)}(\epsilon)$ is as follows
 $$
 \alpha_{d}^{(|k|)}(\epsilon)=\sum_{m\geq\frac{|k|}{2}}I_{2m-|k|}(f_{2m-|k|,d,\epsilon}),
 $$
where $d\geq1$ and $|k|\geq1$.

In this section, we present the central limit theorems for $r_{m,d,|k|}(\epsilon)I_{2m-|k|}(f_{2m-|k|,d,\epsilon})$ when renormalized by a multiplicative factor $r_{m,d,|k|}(\epsilon)$ for any $d\geq1$ and any $|k|\geq1$. We find that the renormalized factor $r_{m,d,|k|}(\epsilon)$ is as follows:
 \begin{align*}
 r_{m,d,|k|}(\epsilon)=\left\{
\begin{array}{ll}
\left(\log\frac{1}{\epsilon}\right)^{-\frac{1}{2}},&\quad {\text { $d=1,|k|=2,m\geq2$}},\\
\left(\log\frac{1}{\epsilon}\right)^{-1},&\quad {\text { $d=2,|k|=1,m=1$}},\\
\left(\log\frac{1}{\epsilon}\right)^{-\frac{1}{2}},&\quad {\text { $d=2,|k|=1,m\geq2$}},\\
\epsilon^{\frac{d+|k|-3}{2}},&\quad {\text { $d\geq1,d+|k|\geq4$}},m>\frac{|k|}{2}.\\
\end{array}
\right.
 \end{align*}

Before completing the proof of Theorem~\ref{th1.3-7-26}, we give the following lemma which is helpful in the sequel.
\begin{lemma}{(Fourth moment theorem \cite{Nualart-Peccati-05,Nourdin-Peccati})}\label{lem3-7-26}
Let $n\geq 2$ be a fixed integer. Consider a collection of elements $\{f_{T}, \ T>0\}$ such that $f_{T}\in\mathfrak{H}^{\otimes n}$ for every $T>0$. Assume further that
\begin{align*}
\lim\limits_{T\to\infty}E[|I_{n}(f_{T})|^{2}]=\lim\limits_{T\to\infty}n!\|f_{T}\|_{\mathfrak{H}^{\otimes n}}^{2}=\sigma^{2}.
\end{align*}
Then the following conditions are equivalent:
\begin{itemize}
\item As $T\to\infty$, the sequence $\{I_{n}(f_{T})\}_{T>0}$ converges in distribution to a standard Gaussian random variable $N(0,\sigma^{2})$.
\item $\lim\limits_{T\to\infty}E[|I_{n}(f_{T})|^{4}]=3\sigma^{4}.$
\item For every $p=1,2,\cdots,n-1$, $\lim\limits_{T\to\infty}\|f_{T}\otimes_{p}f_{T}\|_{\mathfrak{H}^{\otimes(2n-2p)}}^{2}=0$.	
\end{itemize}
\end{lemma}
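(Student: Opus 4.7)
Since this lemma is the classical Fourth Moment Theorem of Nualart--Peccati (together with its Malliavin--Stein refinement by Nourdin--Peccati), my plan would be to cite the two references and then give a self-contained sketch of the three equivalences, rather than reprove them in detail. The key organizational point is to establish $(2)\Leftrightarrow (3)$ by an algebraic computation (the product formula) and then $(3)\Rightarrow (1)$ by a Malliavin--Stein bound, with $(1)\Rightarrow (2)$ following from hypercontractivity.

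The first step I would carry out is the equivalence $(2)\Leftrightarrow (3)$. The tool here is the product formula for multiple Wiener integrals: for $f\in \mathfrak{H}^{\odot n}$,
\begin{equation*}
I_n(f)^2 = \sum_{p=0}^{n}p!\binom{n}{p}^2 I_{2n-2p}(f\widetilde{\otimes}_p f),
\end{equation*}
where $f\widetilde{\otimes}_p f$ is the symmetrization of the contraction $f\otimes_p f$. Taking expectations of the square of this identity and using the isometry $E[I_k(g)^2]=k!\|\widetilde g\|^2_{\mathfrak{H}^{\otimes k}}$, one obtains the exact expansion
\begin{equation*}
E[I_n(f_T)^4] = 3\bigl(n!\|f_T\|^2_{\mathfrak{H}^{\otimes n}}\bigr)^2 + n!\sum_{p=1}^{n-1} p!\binom{n}{p}^2(2n-2p)!\,\|f_T\widetilde{\otimes}_p f_T\|^2_{\mathfrak{H}^{\otimes(2n-2p)}}.
\end{equation*}
Under the hypothesis $n!\|f_T\|^2\to \sigma^2$, convergence $E[I_n(f_T)^4]\to 3\sigma^4$ is therefore equivalent to every contraction norm $\|f_T\otimes_p f_T\|$ going to zero, since the symmetrized contraction norms are bounded by the unsymmetrized ones and vice versa.

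Next I would handle $(3)\Rightarrow(1)$ via the Malliavin--Stein method. On a fixed Wiener chaos $\mathcal{H}_n$ the Malliavin derivative satisfies $\frac{1}{n}\|DI_n(f)\|^2_{\mathfrak{H}}=\|f\|^2_{\mathfrak{H}^{\otimes n}}+\text{lower chaos terms built from contractions}$. The Nourdin--Peccati inequality then gives
\begin{equation*}
d_{TV}\bigl(I_n(f_T),\mathcal{N}(0,\sigma^2)\bigr)\leq \frac{2}{\sigma^2}\sqrt{\mathrm{Var}\Bigl(\tfrac{1}{n}\|DI_n(f_T)\|^2_{\mathfrak{H}}\Bigr)},
\end{equation*}
and the variance on the right is a linear combination of $\|f_T\otimes_p f_T\|^2$ for $p=1,\dots,n-1$. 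Hence (3) forces the total variation distance to $0$, which gives (1). Finally, $(1)\Rightarrow(2)$ uses that random variables in $\mathcal{H}_n$ are hypercontractive, so all $L^p$ norms are equivalent to the $L^2$ norm; convergence in law together with uniform integrability of the fourth power then upgrades to convergence of the fourth moment.

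The main obstacle, were one to write a full proof, is bookkeeping of the contractions in the product formula and the verification that the coefficient of $\bigl(n!\|f_T\|^2\bigr)^2$ in the expansion of $E[I_n(f_T)^4]$ is exactly $3$; but since both references provide this computation in full, I would simply refer the reader to \cite{Nualart-Peccati-05,Nourdin-Peccati} and use the lemma as a black box in what follows.
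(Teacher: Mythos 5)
The paper gives no proof of this lemma at all: it is imported verbatim as the classical Fourth Moment Theorem and the references \cite{Nualart-Peccati-05,Nourdin-Peccati} are cited in its statement, which is exactly the treatment you propose (use it as a black box), and your sketch of the standard argument --- product formula for $(2)\Leftrightarrow(3)$, Malliavin--Stein for $(3)\Rightarrow(1)$, hypercontractivity for $(1)\Rightarrow(2)$ --- is a correct outline of the proof found in those references. The only blemish is in the displayed fourth-moment expansion, where the coefficient of $\|f_T\widetilde{\otimes}_p f_T\|^2$ should be $(p!)^2\binom{n}{p}^4(2n-2p)!$ coming from squaring the product-formula coefficients, rather than $n!\,p!\binom{n}{p}^2(2n-2p)!$; since you explicitly defer this bookkeeping to the cited sources, it does not affect the soundness of your plan.
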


For simplicity, we introduce some notations in the sequel. For $1$-dimensional Brownian motion $\{B_{t},\ t\geq0\}$, we define
\begin{align*}
	K(s_{1},t_{1},s_{2},t_{2}):=E[(B_{t_{1}}-B_{s_{1}})(B_{t_{2}}-B_{s_{2}})]
\end{align*}
and
\begin{align*}
	\mu(x,u_{1},u_{2}):=E[B_{u_{1}}(B_{x+u_{2}}-B_{x})],
\end{align*}
where $x>0$, $u_{1}>0$ and $u_{2}>0$. It is clear that
\begin{align*}
	K(s_{1},t_{1},s_{2},t_{2})=\begin{cases}
		\mu(s_{2}-s_{1},t_{1}-s_{1},t_{2}-s_{2}),&\ \ s_{2}-s_{1}>0,\\
		\mu(s_{1}-s_{2},t_{2}-s_{2},t_{1}-s_{1}),&\ \ s_{1}-s_{2}>0,
	\end{cases}
\end{align*}
and
\begin{align}\label{5-16}
	\mu(x,u_1,u_2)=1_{\{0<u_1-x<u_2\}}(u_1-x)+1_{\{0<u_2<u_1-x\}}u_2.
\end{align}

\subsection{The proof of (i) of Theorem~\ref{th1.3-7-26}}\label{sec4.1}
\begin{proof}[Proof of (i)]
	By Proposition \ref{pro3.1} and Lemma \ref{lem3-7-26}, it is enough to prove for $p=1,2,\dots,2m-3$,
	\begin{align}\label{f}
	\lim_{\epsilon\to 0}\frac{1}{(\log(1/\epsilon))^{2}}\|f_{2m-2,1,\epsilon}\otimes_{p}f_{2m-2,1,\epsilon}\|_{\mathfrak{H}^{\otimes(4m-4-2p)}}^{2}=0.
	\end{align}
	The proof will be done in four steps.

   {\bf Step I.}
   Thus we have from the definition of $f_{2m-2,1,\epsilon}$ (see~\eqref{sec2-2})
	\begin{align*}
	&\frac{1}{\log(1/\epsilon)}(f_{2m-2,1,\epsilon}\otimes_{p}f_{2m-2,1,\epsilon})(r_{1},\dots,r_{2m-2-p},v_{1},\dots,v_{2m-2-p})\\
	 &=\frac{\bar{\beta}_{2m-2,1}}{\log(1/\epsilon)}\int_{\mathcal{T}}(\epsilon+t_{1}-s_{1})^{-m-\frac{1}{2}}(\epsilon+t_{2}-s_{2})^{-m-\frac{1}{2}}K(s_{1},t_{1},s_{2},t_{2})^{p}\\
	&\qquad\qquad\qquad\quad \times\prod_{j=1}^{2m-2-p}1_{(s_{1},t_{1}]}(r_{j})1_{(s_{2},t_{2}]}(v_{j})ds_{1}dt_{1}ds_{2}dt_{2},
	\end{align*}
	where
\begin{align*}
\bar{\beta}_{2m-2,1}:=\frac{((2m)!)^{2}}{2\pi2^{2m}((2m-2)!)^{2}(m!)^{2}},
\end{align*}
and
$$\mathcal{T}=\{(s_{1},t_{1},s_{2},t_{2}):0<s_{1}<t_{1}<1,0<s_{2}<t_{2}<1\}.$$
Then we write
	\begin{align*}
    &\frac{1}{(\log(1/\epsilon))^{2}}\|f_{2m-2,1,\epsilon}\otimes_{p}f_{2m-2,1,\epsilon}\|_{\mathfrak{H}^{\otimes(4m-4-2p)}}^{2}\\
    &=\frac{\bar{\beta}_{2m-2,1}^{2}}{(\log(1/\epsilon))^{2}}\int_{\mathcal{D}}\prod_{i=1}^{4}(\epsilon+t_{i}-s_{i})^{-m-\frac{1}{2}}K(s_{1},t_{1},s_{2},t_{2})^{p}K(s_{3},t_{3},s_{4},t_{4})^{p}\\
	&\qquad\qquad\qquad\ \ \ \ \ \ \times K(s_{1},t_{1},s_{3},t_{3})^{2m-2-p}K(s_{2},t_{2},s_{4},t_{4})^{2m-2-p}d\vec{s}d\vec{t},
	\end{align*}
	where $$\mathcal{D}=\{(s_{1},t_{1},s_{2},t_{2},s_{3},t_{3},s_{4},t_{4}):0<s_{i}<t_{i}<1, i=1,2,3,4\}.$$
and $d\vec{s}d\vec{t}:=ds_{1}dt_{1}ds_{2}dt_{2}ds_{3}dt_{3}ds_{4}dt_{4}$. Let $\{B_{t}^{i,j}$, $t\geq 0\}$, $i=1,2,3,4$, $j=1,2,\dots,2m-2$ be 1-dimensional independent Brownian motions. By the independence of Brownian motion, we have $E[B_{t}^{i,j}B_{s}^{i',j'}]=0$ for $(i,j)\neq(i',j')$. It is clear to see that
	\begin{align*}
	K(s_{1},t_{1},s_{2},t_{2})^{p}=E\left[\prod_{l=1}^{p}(B_{t_{1}}^{1,l}-B_{s_{1}}^{1,l})(B_{t_{2}}^{1,l}-B_{s_{2}}^{1,l})\right],
	\end{align*}
	\begin{align*}
	K(s_{3},t_{3},s_{4},t_{4})^{p}=E\left[\prod_{l=1}^{p}(B_{t_{3}}^{2,l}-B_{s_{3}}^{2,l})(B_{t_{4}}^{2,l}-B_{s_{4}}^{2,l})\right],
	\end{align*}
	\begin{align*}
	K(s_{1},t_{1},s_{3},t_{3})^{2m-2-p}=E\left[\prod_{q=1}^{2m-2-p}(B_{t_{1}}^{3,q}-B_{s_{1}}^{3,q})(B_{t_{3}}^{3,q}-B_{s_{3}}^{3,q})\right],
	\end{align*}
	\begin{align*}
	K(s_{2},t_{2},s_{4},t_{4})^{2m-2-p}=E\left[\prod_{q=1}^{2m-2-p}(B_{t_{2}}^{4,q}-B_{s_{2}}^{4,q})(B_{t_{4}}^{4,q}-B_{s_{4}}^{4,q})\right].
	\end{align*}
	Then, one can rewrite	
	\begin{equation}\label{f-xi}
	\begin{split}
	&\frac{1}{(\log(1/\epsilon))^{2}}\|f_{2m-2,1,\epsilon}\otimes_{p}f_{2m-2,1,\epsilon}\|_{\mathfrak{H}^{\otimes(4m-4-2p)}}^{2}\\
	 &=\frac{\bar{\beta}_{2m-2,1}^{2}}{(\log(1/\epsilon))^{2}}E\Big[\int_{\mathcal{D}}\prod_{i=1}^{4}(\epsilon+t_{i}-s_{i})^{-m-\frac{1}{2}}\prod_{l=1}^{p}\prod_{q=1}^{2m-2-p}(B_{t_{1}}^{1,l}-B_{s_{1}}^{1,l})(B_{t_{1}}^{3,q}-B_{s_{1}}^{3,q})(B_{t_{2}}^{1,l}-B_{s_{2}}^{1,l})\\
	&\ \ \ \ \ \ \ \quad\quad\quad \times(B_{t_{2}}^{4,q}-B_{s_{2}}^{4,q})(B_{t_{3}}^{2,l}-B_{s_{3}}^{2,l})(B_{t_{3}}^{3,q}-B_{s_{3}}^{3,q})(B_{t_{4}}^{2,l}-B_{s_{4}}^{2,l})(B_{t_{4}}^{4,q}-B_{s_{4}}^{4,q})d\vec{s}d\vec{t}\Big]\\
	&:=\bar{\beta}_{2m-2,1}^{2}E\Big[\prod_{(i,j)}\xi_{2m-2,1,\epsilon}^{(i,j)}\Big],
	\end{split}	
	\end{equation}
	where $(i,j)$ belongs to $\{(1,3),(1,4),(2,3),(2,4)\}$ and
	\begin{align*}
	 \xi_{2m-2,1,\epsilon}^{(i,j)}=\frac{1}{\sqrt{\log\frac{1}{\epsilon}}}\int_{0}^{1}\int_{0}^{t}(\epsilon+t-s)^{-m-\frac{1}{2}}\prod_{l=1}^{p}\prod_{q=1}^{2m-2-p}(B_{t}^{i,l}-B_{s}^{i,l})(B_{t}^{j,q}-B_{s}^{j,q})dsdt.
	\end{align*}
By the proof of Proposition \ref{pro3.1}, it is not hard to obtain
	\begin{align*}
	\lim_{\epsilon\to 0}E[(\xi_{2m-2,1,\epsilon}^{(i,j)})^{2}]
	&=\lim_{\epsilon\to 0}\frac{1}{\log(1/\epsilon)}\int_{\mathcal{T}}\prod^2_{i=1}(\epsilon+t_{i}-s_{i})^{-m-\frac{1}{2}}K(s_{1},t_{1},s_{2},t_{2})^{2m-2}ds_{1}dt_{1}ds_{2}dt_{2}\\
	&=\bar{\sigma}_{2m-2,1}^{2},
	\end{align*}
	where $\bar{\sigma}_{2m-2,1}^{2}:=2\left(\frac{1}{(m-\frac{1}{2})^{2}}+\frac{1}{(m-\frac{1}{2})(m-\frac{3}{2})}\right)$. It follows from the independence of Brownian motion that $E[\xi_{2m-2,1,\epsilon}^{(i,j)}\xi_{2m-2,1,\epsilon}^{(i',j')}]=0$, for $(i,j)\neq (i',j')$. So we reduce our problem to show that
\begin{align}\label{xi}
	\xi_{2m-2,1,\epsilon}^{(i,j)}\xrightarrow{Law} \mathcal{N}\left(0,\bar{\sigma}_{2m-2,1}^{2}\right), \quad \text{ as }\  \epsilon\rightarrow 0.
	\end{align}
	Indeed, if (\ref{xi}) holds, thus  $$\left(\xi^{(1,3)}_{2m-2,1,\epsilon},\xi^{(1,4)}_{2m-2,1,\epsilon},\xi^{(2,3)}_{2m-2,1,\epsilon},\xi^{(2,4)}_{2m-2,1,\epsilon}\right)$$
converges to a centered Gaussian vector from Proposition $1$ in \cite{Pecatti}. Together with (\ref{f-xi}), we can get (\ref{f}).
	
	{\bf Step II.} By the definition of $\xi_{2m-2,1,\epsilon}^{(i,j)}$, it is clear that random variables $\left\{\xi_{2m-2,1,\epsilon}^{(1,3)},\xi_{2m-2,1,\epsilon}^{(1,4)},\right.$
$\left.\xi_{2m-2,1,\epsilon}^{(2,3)},\xi_{2m-2,1,\epsilon}^{(2,4)}\right\}$ are identically distributed. Due to the self-similarity of Brownian motion, they have the same distribution as $\eta_{2m-2,1,\epsilon}$, where
	\begin{align*}
	 \eta_{2m-2,1,\epsilon}:=\frac{\sqrt{\epsilon}}{\sqrt{\log\frac{1}{\epsilon}}}\int_{0<s<t<\frac{1}{\epsilon}}(1+t-s)^{-m-\frac{1}{2}}\prod_{i=1}^{2m-2}(B_{t}^{i}-B_{s}^{i})dsdt,
	\end{align*} 	
	and $\{B_{t}=(B_{t}^{1},\cdots,B_{t}^{2m-2}),\ t\geq 0\}$ is a $(2m-2)$-dimensional Brownian motion. It is obvious that
	\begin{align*}
	 \eta_{2m-2,1,\epsilon}&=\frac{\sqrt{\epsilon}}{\sqrt{\log\frac{1}{\epsilon}}}\int_{0<s<t\wedge\frac{1}{\epsilon}}(1+t-s)^{-m-\frac{1}{2}}\prod_{i=1}^{2m-2}(B_{t}^{i}-B_{s}^{i})dsdt\\&\ \ \ \ \ \ -\frac{\sqrt{\epsilon}}{\sqrt{\log\frac{1}{\epsilon}}}\int_{0<s<\frac{1}{\epsilon}<t}(1+t-s)^{-m-\frac{1}{2}}\prod_{i=1}^{2m-2}(B_{t}^{i}-B_{s}^{i})dsdt\\
	:&=\eta_{2m-2,1,\epsilon,1}-\eta_{2m-2,1,\epsilon,2}.
	\end{align*}
	By the elementary calculations, we have
	\begin{align*}
	\|\eta_{2m-2,1,\epsilon,2}\|_{L^{2}}&\leq \frac{\sqrt{\epsilon}}{\sqrt{\log\frac{1}{\epsilon}}}\int_{0<s<\frac{1}{\epsilon}<t}(1+t-s)^{-m-\frac{1}{2}}\left\|\prod_{i=1}^{2m-2}(B_{t}^{i}-B_{s}^{i})\right\|_{L^{2}}dsdt\\
	&=\frac{\sqrt{\epsilon}}{\sqrt{\log\frac{1}{\epsilon}}}\int_{0<s<\frac{1}{\epsilon}<t}(1+t-s)^{-m-\frac{1}{2}}(t-s)^{m-1}dsdt\\
	&\leq\frac{\sqrt{\epsilon}}{\sqrt{\log\frac{1}{\epsilon}}}\int_{0<s<\frac{1}{\epsilon}<t}(t-s)^{-\frac{3}{2}}dsdt
	=\frac{4}{\sqrt{\log\frac{1}{\epsilon}}}\to 0,
	\end{align*}
	as $\epsilon$ tends to zero. Further, it suffices to prove that
	\begin{align*}
	\eta_{2m-2,1,\epsilon,1}\xrightarrow{Law} \mathcal{N}\left(0,\bar{\sigma}_{2m-2,1}^{2}\right), \quad \text{ as }\  \epsilon\rightarrow 0.
	\end{align*}
	
	{\bf Step III}
	By changing the variables $u=t-s$ and $N=\frac{1}{\epsilon}$, we rewrite
	\begin{align}\label{35}
	\eta_{2m-2,1,N,1}=\frac{1}{\sqrt{N\log N}}\int_{0}^{\infty}\int_{0}^{N}(1+u)^{-m-\frac{1}{2}}\prod_{i=1}^{2m-2}(B_{s+u}^{i}-B_{s}^{i})dsdu.
	\end{align}
	Denote
	\begin{align}\label{36}
	\zeta_{2m-2,1,N}:=\frac{1}{\sqrt{N\log N}}\int_{1}^{\infty}\int_{0}^{N}u^{-m-\frac{1}{2}}\prod_{i=1}^{2m-2}(B_{s+u}^{i}-B_{s}^{i})dsdu.
	\end{align}
	By Lemma \ref{lem4-7-26}, we know that the asymptotic behavior of $\eta_{2m-2,1,N,1}$ is the same as $\zeta_{2m-2,1,N}$ when $N$ tends to infinity.
	Hence, we will prove that
	\begin{align*}
	\zeta_{2m-2,1,N}\xrightarrow{Law} \mathcal{N}\left(0,\bar{\sigma}_{2m-2,1}^{2}\right), \quad \text{ as }\  N\rightarrow \infty.
	\end{align*}
	By Lemma \ref{lem3-7-26} and Lemma \ref{lem6}, we just need to prove that
	\begin{align}\label{40}
	\lim_{N\to\infty}E[\zeta_{2m-2,1,N}^{4}]=3\bar{\sigma}_{2m-2,1}^{4}.
	\end{align}
	{\bf Step IV}
	Denote
	\begin{align*}
	\vartheta_{2m-2,1,N}(n):=\frac{1}{\sqrt{2^{n}N\log N}}\int_{2^{n}}^{\infty}u^{-m-\frac{1}{2}}\int_{0}^{N}\prod_{i=1}^{2m-2}(B_{s+u}^{i}-B_{s}^{i})dsdu, \ \ \ \ n\geq 1.
	\end{align*}
	It is clear that $\vartheta_{2m-2,1,N}(0)=\zeta_{2m-2,1,N}$. We have that for $n\geq 1$,
	\begin{equation}\label{43}
    \begin{split}
	\vartheta_{2m-2,1,N,}(n-1)&=\frac{1}{\sqrt{2^{n-1}N\log N}}\int_{2^{n-1}}^{\infty}u^{-m-\frac{1}{2}}\int_{0}^{\frac{N}{2}}\prod_{i=1}^{2m-2}(B_{s+u}^{i}-B_{s}^{i})dsdu\\
	&\ \ \ \ \ \ \  +\frac{1}{\sqrt{2^{n-1}N\log N}}\int_{2^{n-1}}^{\infty}u^{-m-\frac{1}{2}}\int_{\frac{N}{2}}^{N}\prod_{i=1}^{2m-2}(B_{s+u}^{i}-B_{s}^{i})dsdu\\
	:&=\vartheta_{2m-2,1,N}(n-1,1)+\vartheta_{2m-2,1,N}(n-1,2).
    \end{split}
	\end{equation}
	It is clear that $\vartheta_{2m-2,1,N}(n-1,1)$ and $\vartheta_{2m-2,1,N}(n-1,2)$ have the same distribution as $\vartheta_{2m-2,1,N}(n)$. That is
	\begin{align}\label{37}
	E[\vartheta^2_{2m-2,1,N}(n-1,1)]=E[\vartheta^2_{2m-2,1,N}(n-1,2)]=E[\vartheta^2_{2m-2,1,N}(n)],
	\end{align}
	which leads to
	\begin{equation}\label{38}
	\begin{split}
	&E[\vartheta^2_{2m-2,1,N}(n-1)]\\
    &=E[\vartheta^2_{2m-2,1,N}(n-1,1)]+E[\vartheta^2_{2m-2,1,N}(n-1,2)]\\
    &\quad+2E[\vartheta_{2m-2,1,N}(n-1,1)\vartheta_{2m-2,1,N}(n-1,2)]\\
	&=2E[\vartheta^2_{2m-2,1,N}(n)]+2E[\vartheta_{2m-2,1,N}(n-1,1)\vartheta_{2m-2,1,N}(n-1,2)].
	\end{split}	
	\end{equation}
	By the proof of Lemma \ref{lem6}, we write for $n\geq0$
	\begin{align}\label{27}
	\lim_{N\to\infty}E[\vartheta^2_{2m-2,1,N}(n)]=\frac{\bar{\sigma}_{2m-2,1}^{2}}{2^{n}}.
	\end{align}
	It follows from (\ref{37}) and (\ref{27}) that
	\begin{align}\label{39}
	\lim_{N\to\infty}E[\vartheta^2_{2m-2,1,N}(n-1,1)]=\lim_{N\to\infty}E[\vartheta^2_{2m-2,1,N}(n-1,2)]=\frac{\bar{\sigma}_{2m-2,1}^{2}}{2^{n}}.
	\end{align}	
	Combining with (\ref{38}) and (\ref{27}), we deduce that
	\begin{align}\label{28}
	\lim_{N\to\infty}E[\vartheta_{2m-2,1,N}(n-1,1)\vartheta_{2m-2,1,N}(n-1,2)]=0.
	\end{align}	 	
    Because of Lemma \ref{lem1-6-16} and (\ref{39})-(\ref{28}), we have, for $n\geq1$,
    \begin{align*}
    &\lim_{N\to\infty}E[\vartheta^2_{2m-2,1,N}(n-1,1)\vartheta^2_{2m-2,1,N}(n-1,2)]\\
    &=\lim_{N\to\infty}E[\vartheta^2_{2m-2,1,N}(n-1,1)]E[\vartheta^2_{2m-2,1,N}(n-1,2)]\\
    &=\lim_{N\to\infty}\left(E[\vartheta^2_{2m-2,1,N}(n)]\right)^{2},
    \end{align*}
    and
    \begin{align*}
    &\lim_{N\to\infty}E[\vartheta_{2m-2,1,N}(n-1,1)\vartheta^3_{2m-2,1,N}(n-1,2)]\\
    &=\lim_{N\to\infty}E[\vartheta^3_{2m-2,1,N}(n-1,1)\vartheta_{2m-2,1,N}(n-1,2)]=0.
    \end{align*}
Hence,
    \begin{align*}
    &E[\vartheta^4_{2m-2,1,N}(n-1)]\\
    &=2E[\vartheta^4_{2m-2,1,N}(n)]+4E[\vartheta_{2m-2,1,N}(n-1,1)\vartheta^3_{2m-2,1,N}(n-1,2)]\\
    &\quad+4E[\vartheta^3_{2m-2,1,N}(n-1,1)\vartheta^3_{2m-2,1,N}(n-1,2)]\\
    &\quad+6E[\vartheta^2_{2m-2,1,N}(n-1,1)\vartheta^2_{2m-2,1,N}(n-1,2)].
    \end{align*}
    Taking limit of $N$ in both sides, we obtain for $n\geq1$
    \begin{equation}\label{30}
    \begin{split}
    &\lim_{N\to\infty}E[\vartheta^4_{2m-2,1,N}(n-1)]\\
    &=2\lim_{N\to\infty}E[\vartheta^4_{2m-2,1,N}(n)]+6\lim_{N\to\infty}\left(E[\vartheta^2_{2m-2,1,N}(n)]\right)^{2}\\
    &=2\lim_{N\to\infty}E[\vartheta^4_{2m-2,1,N}(n)]+\frac{6\bar{\sigma}_{2m-2,1}^{4}}{2^{2n}}.\\
    \end{split}
    \end{equation}
    With the iteration of (\ref{30}), we can write
    \begin{equation}\label{zeta-4}
    \begin{split}
    &\lim_{N\to\infty}E[\zeta_{2m-2,1,N}^{4}]\\
    &=\lim_{N\rightarrow\infty}E[\vartheta^{4}_{2m-2,1,N}(0)]\\
    &=\lim_{n\to\infty}\left(2^{n}\lim_{N\to\infty}E[\vartheta^4_{2m-2,1,N}(n)]+
    \sum_{k=1}^{n}2^{k-1}\frac{6\bar{\sigma}_{2m-2,1}^{4}}{2^{2k}}\right)\\
    &:=\lim_{n\to\infty}[\Lambda_{1}(n)+\Lambda_{2}(n)].
    \end{split}
    \end{equation}
    By the proof of Lemma~\ref{lem1-6-16} and the elementary inequality $(a+b+c)^{2m-2}\leq 3^{2m-3}(a^{2m-2}+b^{2m-2}+c^{2m-2})$, we deduce that $$E[\vartheta_{2m-2,1,N}^{4}(n)]\leq 3^{2m-2}(E[\vartheta_{2m-2,1,N}^{2}(n)])^{2},$$ which implies that
    \begin{align}\label{32}
    \lim_{n\to\infty}\Lambda_{1}(n)\leq \lim_{n\to\infty}\frac{3^{2m-2}\bar{\sigma}_{2m-2,1}^{4}}{2^{n}}=0.
    \end{align}
    It follows from the elementary calculations that
    \begin{equation}\label{33}
    \begin{split}
    \lim_{n\rightarrow\infty}\Lambda_{2}(n)&=\lim_{n\rightarrow\infty}\sum_{k=1}^{n}2^{k-1}\frac{6\bar{\sigma}_{2m-2,1}^{2}}{2^{2k}}\\
    &=\lim_{n\rightarrow\infty}3\bar{\sigma}_{2m-2,1}^{4}\left(1-\frac{1}{2^{n}}\right)\\
    &=3\bar{\sigma}_{2m-2,1}^{4}.
    \end{split}
    \end{equation}
    Together with (\ref{zeta-4})-(\ref{33}), we obtain~(\ref{40}). Thus the proof is completed.
\end{proof}

\subsection{The proof of (ii) of Theorem~\ref{th1.3-7-26}}\label{sec4.2}
\begin{proof}[Proof of (ii)] It is clear that $(\log\frac{1}{\epsilon})^{-1}I_1(f_{1,2,\epsilon})$ is a centered Gaussian random variable. It follows from Proposition~\ref{pro3.2} that
\begin{equation*}
\left(\log\frac{1}{\epsilon}\right)^{-1}I_{1}(f_{1,2,\epsilon})\xrightarrow{Law} \mathcal{N}\left(0,\frac{1}{4\pi^2}\right),
\end{equation*}
when $\epsilon$ tends to $0$.

Next, we address the case of $(2m-1)$-th chaotic components for $m\geq2$. By the definition of $f_{2m-1,2,\epsilon}$ (see~\eqref{sec2-1}), we have
\begin{align*}
&(f_{2m-1,2,\epsilon}\otimes_{p}f_{2m-1,2,\epsilon})(\mathbf{i}_{2m-1-p,2},\mathbf{j}_{2m-1-p,2},r_{1},\cdots,r_{2m-1-p},v_{1},\cdots,v_{2m-1-p})\\
&=\bar{\beta}_{2m-1-p,2}\int_{\mathcal{T}}(\epsilon+t_{1}-s_{1})^{-m-1}(\epsilon+t_{2}-s_{2})^{-m-1}K(s_{1},t_{1},s_{2},t_{2})^{p}\\
&\qquad\qquad \times\prod_{j=1}^{2m-1-p}1_{(s_{1},t_{1}]}(r_{j})1_{(s_{2},t_{2}]}(v_{j})ds_{1}dt_{1}ds_{2}dt_{2},
\end{align*}
where
\begin{align*}
\bar{\beta}_{2m-1-p,2}=\frac{\sum_{\mathbf{k}_{p,2}}A(\mathbf{i}_{2m-1-p,2},\mathbf{k}_{p,2})A(\mathbf{j}_{2m-1-p,2},\mathbf{k}_{p,2})}{(2\pi)^{2}((2m-|k|)!)^{2}},
\end{align*}
and we write
\begin{equation*}
\begin{split}
&\frac{1}{(\log(1/\epsilon))^{2}}\|f_{2m-1,2,\epsilon}\otimes_{p}f_{2m-1,2,\epsilon}\|_{\mathfrak{H}^{\otimes(4m-2-2p)}}^{2}\\
&=\frac{\tilde{\beta}_{2m-1-p,2}}{(\log(1/\epsilon))^{2}}E\Big[\int_{\mathcal{D}}\prod_{i=1}^{4}(\epsilon+t_{i}-s_{i})^{-m-1}\prod_{l=1}^{p}\prod_{q=1}^{2m-1-p}(B_{t_{1}}^{1,l}-B_{s_{1}}^{1,l})(B_{t_{1}}^{3,q}-B_{s_{1}}^{3,q})\\
&\ \ \ \ \ \ \ \quad\quad\quad \times(B_{t_{2}}^{1,l}-B_{s_{2}}^{1,l})(B_{t_{2}}^{4,q}-B_{s_{2}}^{4,q})(B_{t_{3}}^{2,l}-B_{s_{3}}^{2,l})(B_{t_{3}}^{3,q}-B_{s_{3}}^{3,q})\\
&\ \ \ \ \ \ \ \quad\quad\quad \times(B_{t_{4}}^{2,l}-B_{s_{4}}^{2,l})(B_{t_{4}}^{4,q}-B_{s_{4}}^{4,q})ds_{1}dt_{1}ds_{2}dt_{2}ds_{3}dt_{3}ds_{4}dt_{4}\Big]\\
:&=\tilde{\beta}_{2m-1-p,2}E\Big[\prod_{(i,j)}\xi_{2m-1,2,\epsilon}^{(i,j)}\Big],
\end{split}	
\end{equation*}
where $$\mathcal{D}=\{(s_{1},t_{1},s_{2},t_{2},s_{3},t_{3},s_{4},t_{4}):0<s_{i}<t_{i}<1, i=1,2,3,4\},$$
$(i,j)$ belongs to $\{(1,3),(1,4),(2,3),(2,4)\}$ and
\begin{align*}
\xi_{2m-1,2,\epsilon}^{(i,j)}=\frac{1}{\sqrt{\log\frac{1}{\epsilon}}}\int_{0}^{1}\int_{0}^{t}(\epsilon+t-s)^{-m-1}\prod_{l=1}^{p}\prod_{q=1}^{2m-1-p}(B_{t}^{i,l}-B_{s}^{i,l})(B_{t}^{j,q}-B_{s}^{j,q})dsdt.
\end{align*}
Here the constant $\tilde{\beta}_{2m-1-p,2}$ is defined as follows
\begin{align*}
\tilde{\beta}_{2m-1-p,2}&:=\sum_{\substack{\mathbf{i}_{2m-1-p,2}\\\mathbf{j}_{2m-1-p,2}}}\bar{\beta}_{2m-1-p,2}^{2}\\
&=\sum_{\substack{\mathbf{i}_{2m-1-p,2}\\\mathbf{j}_{2m-1-p,2}}}\frac{(\sum_{\mathbf{k}_{p,2}}A(\mathbf{i}_{2m-1-p,2},\mathbf{k}_{p,2})A(\mathbf{j}_{2m-1-p,2},\mathbf{k}_{p,2}))^{2}}{(2\pi)^{4}((2m-|k|)!)^{4}}.
\end{align*}
By the proof of Proposition \ref{pro3.2}, it is not hard to see that
\begin{align*}
&\lim_{\epsilon\to 0}E[(\xi_{2m-1,2,\epsilon}^{(i,j)})^{2}]\\
&=\lim_{\epsilon\to 0}\frac{1}{\log(1/\epsilon)}\int_{\mathcal{T}}\prod^2_{l=1}(\epsilon+t_{l}-s_{l})^{-m-1}K(s_{1},t_{1},s_{2},t_{2})^{2m-1}ds_{1}dt_{1}ds_{2}dt_{2}\\
&=\bar{\sigma}_{2m-1,2}^{2},
\end{align*}
where $\bar{\sigma}_{2m-1,2}^{2}=2\left(\frac{1}{m^{2}}+\frac{1}{m(m-1)}\right)$. Similar as the proof of (i) of Theorem \ref{th1.3-7-26}, we have
\begin{align*}
\xi_{2m-1,2,\epsilon}^{(i,j)}\xrightarrow{Law} \mathcal{N}\left(0,\bar{\sigma}_{2m-1,2}^{2}\right), \quad \text{ as }\  \epsilon\rightarrow 0,
\end{align*}
which implies that
\begin{align*}
\frac{1}{(\log(1/\epsilon))^{2}}\|f_{2m-1,2,\epsilon}\otimes_{p}f_{2m-1,2,\epsilon}\|_{\mathfrak{H}^{\otimes(4m-2-2p)}}^{2}\rightarrow 0,
\end{align*}
when $\epsilon$ approaches to $0$. Together with Lemma \ref{lem3-7-26}, we obtain
\begin{align*}
\left(\log\frac{1}{\epsilon}\right)^{-\frac{1}{2}}I_{2m-1}(f_{2m-1,2,\epsilon})\xrightarrow{Law} \mathcal{N}\left(0,2\beta_{2m-1,2}\Big[\frac{1}{m^{2}}+\frac{1}{m(m-1)}\Big]\right), \quad \text{ as }\  \epsilon\rightarrow 0,
\end{align*}
where $\beta_{2m-1,2}$ is defined in (\ref{014}).
\end{proof}

\subsection{The proof of (iii) of Theorem~\ref{th1.3-7-26}}\label{sec4.3}

\begin{proof}[Proof of (iii)] 
Following from the definition of $f_{2m-|k|,d,\epsilon}$ (see~\eqref{sec2-1}), we have
\begin{align*}
&(f_{2m-|k|,d,\epsilon}\otimes_{p}f_{2m-|k|,d,\epsilon})(\mathbf{i}_{2m-|k|-p,d},\mathbf{j}_{2m-|k|-p,d},r_{1},\cdots,r_{2m-|k|-p},v_{1},\cdots,v_{2m-|k|-p})\\
&=\bar{\beta}_{2m-|k|-p,d}\int_{\mathcal{T}}\frac{K(s_{1},t_{1},s_{2},t_{2})^{p}\prod_{j=1}^{2m-|k|-p}1_{(s_{1},t_{1}]}(r_{j})1_{(s_{2},t_{2}]}(v_{j})}{(t_{1}-s_{1}+\epsilon)^{m+\frac{d}{2}}(t_{2}-s_{2}+\epsilon)^{m+\frac{d}{2}}}ds_{1}dt_{1}ds_{2}dt_{2},
\end{align*}
where
\begin{align*}
\bar{\beta}_{2m-|k|-p,d}=\frac{\sum_{\mathbf{k}_{p,d}}A(\mathbf{i}_{2m-|k|-p,d},\mathbf{k}_{p,d})A(\mathbf{j}_{2m-|k|-p,d},\mathbf{k}_{p,d})}{(2\pi)^{d}((2m-|k|)!)^{2}}.
\end{align*}
It is easy to see that
\begin{align*}
&\epsilon^{2(d+|k|-3)}\|f_{2m-|k|,d,\epsilon}\otimes_{p}f_{2m-|k|,d,\epsilon}\|_{\mathfrak{H}^{\otimes(4m-2|k|-2p)}}^{2}\\
&=\epsilon^{2(d+|k|-3)}\tilde{\beta}_{2m-|k|-p,d}E\int_{\mathcal{D}}\prod_{i=1}^{4}(\epsilon+t_{i}-s_{i})^{-m-\frac{d}{2}}\prod_{l=1}^{p}\prod_{q=1}^{2m-|k|-p}(B_{t_{1}}^{1,l}-B_{s_{1}}^{1,l})(B_{t_{1}}^{3,q}-B_{s_{1}}^{3,q})\\
&\qquad\qquad\qquad\ \ \ \ \ \ \  \times(B_{t_{2}}^{1,l}-B_{s_{2}}^{1,l})(B_{t_{2}}^{4,q}-B_{s_{2}}^{4,q})(B_{t_{3}}^{2,l}-B_{s_{3}}^{2,l})(B_{t_{3}}^{3,q}-B_{s_{3}}^{3,q})\\
&\qquad\qquad\qquad\ \ \ \ \ \ \  \times(B_{t_{4}}^{2,l}-B_{s_{4}}^{2,l})(B_{t_{4}}^{4,q}-B_{s_{4}}^{4,q})ds_{1}dt_{1}ds_{2}dt_{2}ds_{3}dt_{3}ds_{4}dt_{4}\\
&:=\tilde{\beta}_{2m-|k|-p,d}E\Big[\prod_{(i,j)}\xi_{2m-|k|,d,\epsilon}^{(i,j)}\Big],
\end{align*}
where$$\mathcal{D}=\{(s_{1},t_{1},s_{2},t_{2},s_{3},t_{3},s_{4},t_{4}):0<s_{i}<t_{i}<1, i=1,2,3,4\},$$
$(i,j)$ belongs to $\{(1,3),(1,4),(2,3),(2,4)\}$ and
\begin{align*}
\xi_{2m-|k|,d,\epsilon}^{(i,j)}:=\epsilon^{\frac{d+|k|-3}{2}}\int_{0}^{1}\int_{0}^{t}(\epsilon+t-s)^{-m-\frac{d}{2}}\prod_{l=1}^{p}\prod_{q=1}^{2m-|k|-p}(B_{t}^{i,l}-B_{s}^{i,l})(B_{t}^{j,q}-B_{s}^{j,q})dsdt.
\end{align*}
Here the constant $\tilde{\beta}_{2m-|k|-p,d}$ is defined as follows
\begin{align*}
	\tilde{\beta}_{2m-|k|-p,d}&:=\sum_{\substack{\mathbf{i}_{2m-|k|-p,d}\\\mathbf{j}_{2m-|k|-p,d}}}\bar{\beta}_{2m-|k|-p,d}^{2}\\
	 &=\sum_{\substack{\mathbf{i}_{2m-|k|-p,d}\\\mathbf{j}_{2m-|k|-p,d}}}\frac{(\sum_{\mathbf{k}_{p,d}}A(\mathbf{i}_{2m-|k|-p,d},\mathbf{k}_{p,d})A(\mathbf{j}_{2m-|k|-p,d},\mathbf{k}_{p,d}))^{2}}{(2\pi)^{2d}((2m-|k|)!)^{4}}.
\end{align*}
Proposition \ref{pro3.3} yields
\begin{align*}
	 &\lim_{\epsilon\rightarrow0}E[(\xi_{2m-|k|,d,\epsilon}^{(i,j)})^{2}]\\
&=\lim_{\epsilon\rightarrow0}\epsilon^{|k|+d-3}\int_{\mathcal{T}}\prod^2_{i=1}(\epsilon+t_{i}-s_{i})^ {-m-\frac{d}{2}}K(s_{1},t_{1},s_{2},t_{2})^{2m-|k|}ds_{1}dt_{1}ds_{2}dt_{2}\\
	&=\phi(m,d,|k|),
\end{align*}
where $$\phi(m,d,|k|):=\frac{2\Gamma(2m-|k|+1)\Gamma(d+|k|-3)}{\Gamma(2m+d-2)(m+\frac{d}{2}-1)}\left(\frac{1}{m+\frac{d}{2}-1}+\frac{1}{m+\frac{d}{2}-2}\right).$$
Then it is sufficient to prove that
\begin{align*}
	\xi_{2m-|k|,d,\epsilon}^{(i,j)}\xrightarrow{Law} \mathcal{N}\left(0,\phi(m,d,|k|)\right),
\end{align*}
when $\epsilon$ tends to $0$. The proof will be divided into three steps.

{\bf Step I.}
Let $\{B_{t}=(B_{t}^{1},\cdots,B_{t}^{2m-|k|}),\ t\geq 0\}$ be a $(2m-|k|)$-dimensional Brownian motion. By the self-similarity of Brownian motion, we define
\begin{align*}
	\eta_{2m-|k|,d,\epsilon}:=\epsilon^{\frac{1}{2}}\int_{0<s<t<\frac{1}{\epsilon}}(1+t-s)^{-m-\frac{d}{2}}\prod_{i=1}^{2m-|k|}(B_{t}^{i}-B_{s}^{i})dsdt,
\end{align*}
and its distribution is the same as the one of $\xi_{2m-|k|,d,\epsilon}^{(i,j)}$. Then we rewrite
\begin{align*}
	\eta_{2m-|k|,d,\epsilon}&=\epsilon^{\frac{1}{2}}\int_{0<s<t\wedge\frac{1}{\epsilon}}(1+t-s)^{-m-\frac{d}{2}}\prod_{i=1}^{2m-|k|}(B_{t}^{i}-B_{s}^{i})dsdt\\&\ \ \ \ \ \ -\epsilon^{\frac{1}{2}}\int_{0<s<\frac{1}{\epsilon}<t}(1+t-s)^{-m-\frac{d}{2}}\prod_{i=1}^{2m-|k|}(B_{t}^{i}-B_{s}^{i})dsdt\\
	&:=\eta_{2m-|k|,d,\epsilon,1}-\eta_{2m-|k|,d,\epsilon,2}.
\end{align*}
By the elementary calculations, we have
\begin{align*}
	\|\eta_{2m-|k|,d,\epsilon,2}\|_{L^{2}}&\leq \epsilon^{\frac{1}{2}}\int_{0<s<\frac{1}{\epsilon}<t}(1+t-s)^{-m-\frac{d}{2}}\left\|\prod_{i=1}^{2m-|k|}(B_{t}^{i}-B_{s}^{i})\right\|_{L^{2}}dsdt\\
	&=\epsilon^{\frac{1}{2}}\int_{0<s<\frac{1}{\epsilon}<t}(1+t-s)^{-m-\frac{d}{2}}(t-s)^{m-\frac{|k|}{2}}dsdt\\
	&\leq\epsilon^{\frac{1}{2}}\int_{0<s<\frac{1}{\epsilon}<t}(1+t-s)^{-\frac{d+|k|}{2}}dsdt\\
	&=\epsilon^{\frac{1}{2}}\int_{0}^{\frac{1}{\epsilon}}\frac{1}{(\frac{d+|k|}{2}-1)(1+\frac{1}{\epsilon}-s)^{\frac{d+|k|}{2}-1}}ds.
\end{align*}
For $d+|k|=4$, it is trivial that
\begin{align*}
	\|\eta_{2m-|k|,d,\epsilon,2}\|_{L^{2}}\leq\epsilon^{\frac{1}{2}}\log(1+\frac{1}{\epsilon})\rightarrow 0, \quad \text{ as }\  \epsilon\rightarrow 0.
\end{align*}
For $d+|k|>4$, we deduce that
\begin{align*}
	 \|\eta_{2m-|k|,d,\epsilon,2}\|_{L^{2}}&\leq\frac{\epsilon^{\frac{1}{2}}}{(\frac{d+|k|}{2}-1)(\frac{d+|k|}{2}-2)}\Big[1-\frac{1}{(1+\frac{1}{\epsilon})^{\frac{d+|k|}{2}-2}}\Big]\rightarrow 0, \text{ as }\  \epsilon\rightarrow 0.
\end{align*}
Hence, it is enough to show that
\begin{align*}
	\eta_{2m-|k|,d,\epsilon,1}\xrightarrow{Law} \mathcal{N}(0,\phi(m,d,|k|)), \quad \text{ as } \epsilon\rightarrow 0.
\end{align*}

{\bf Step II.}
Making the change of variable $u=t-s$, $N=\frac{1}{\epsilon}$, we can rewrite
\begin{align*}
	\eta_{2m-|k|,d,N,1}&=\frac{1}{\sqrt{N}}\int_{0}^{\infty}(1+u)^{-m-\frac{d}{2}}\int_{0}^{N}\prod_{i=1}^{2m-|k|}(B_{s+u}^{i}-B_{s}^{i})dsdu\\
	&=\int_{0}^{\infty}(1+u)^{-m-\frac{d}{2}}\gamma_{2m-|k|,N}(u)du,
\end{align*}
where
\begin{align*}
	\gamma_{2m-|k|,N}(u):=\frac{1}{\sqrt{N}}\int_{0}^{N}\prod_{i=1}^{2m-|k|}(B_{s+u}^{i}-B_{s}^{i})ds.
\end{align*}
Following from (\ref{07}), we have
\begin{align*}
&\lim_{N\rightarrow\infty}E[\eta_{2m-|k|,d,N,1}^{2}]\\
&=2\int_{0}^{\infty}\int_{0}^{\infty}(1+u_{1})^{-m-\frac{d}{2}}(1+u_{2})^{-m-\frac{d}{2}}\int_{0}^{\infty}\mu(x,u_{1},u_{2})^{2m-|k|}dxdu_{1}du_{2}\\
&=\phi(m,d,|k|),
\end{align*}
where $\mu(x,u_{1},u_{2})$ is defined in~\eqref{sec4.1}. Now we reduce our problem to show that
\begin{align}\label{011}
\eta_{2m-|k|,d,N,1}\xrightarrow{Law} \mathcal{N}(0,\phi(m,d,|k|)), \quad \text{ as }\ N\rightarrow \infty.
\end{align}


{\bf Step III.}
By Proposition $4$ in \cite{Hu-Nualart-05}, there exists a centered Gaussian process $\gamma_{2m-|k|}(u)$ with covariance function
\begin{align*}
\int_{0}^{\infty}\mu(x,u_{1},u_{2})^{2m-|k|}dx,
\end{align*}
such that for every $u>0$,
\begin{align*}
\gamma_{2m-|k|,N}(u)\xrightarrow{Law}\gamma_{2m-|k|}(u),\quad \text{ as }\  N\rightarrow\infty,
\end{align*}
where $\mu(x,u_{1},u_{2})$ is defined in \eqref{5-16}. For $M>0$, we denote
\begin{align*}
	\eta_{2m-|k|,d,N,1}^{(M)}:=\int_{0}^{M}(1+u)^{-m-\frac{d}{2}}\gamma_{2m-|k|,N}(u)du.
\end{align*}
and
\begin{align*}
\vartheta_{2m-|k|,d}^{(M)}:=\int_{0}^{M}(1+u)^{-m-\frac{d}{2}}\gamma_{2m-|k|}(u)du.
\end{align*}
It is not hard to see that $\vartheta_{2m-|k|,d}^{(M)}$ is centered Gaussian and
\begin{align*}
\vartheta_{2m-|k|,d}^{(M)}\sim \mathcal{N}\left(0,2\int_{0}^{M}\int_{0}^{M}(1+u_{1})^{-m-\frac{d}{2}}(1+u_{2})^{-m-\frac{d}{2}}\int_{0}^{\infty}\mu(x,u_{1},u_{2})^{2m-|k|}dxdu_{1}du_{2}\right).
\end{align*}
Thus we define $\vartheta_{2m-|k|,d}$ as follows
\begin{align*}
\vartheta_{2m-|k|,d}:=\int_{0}^{\infty}(1+u)^{-m-\frac{d}{2}}\gamma_{2m-|k|}(u)du,
\end{align*}
and $\vartheta_{2m-|k|,d}$ is a centered Gaussian random variable with variance $\phi(m,d,|k|)$. Let $\psi$ be a continuous and bounded function. It follows from the elementary calculations that
\begin{align*}
	&\left|E[\psi(\eta_{2m-|k|,d,N,1})]-E[\psi(\vartheta_{2m-|k|,d})]\right|\\
	&\leq \left|E[\psi(\eta_{2m-|k|,d,N,1})-\psi(\eta_{2m-|k|,d,N,1}^{(M)})]\right|+\left|E[\psi(\eta_{2m-|k|,d,N,1}^{(M)})-\psi(\vartheta_{2m-|k|,d}^{(M)})]\right|\\
	&\quad+\left|E[\psi(\vartheta_{2m-|k|,d}^{(M)})-\psi(\vartheta_{2m-|k|,d})]\right|\\
	&:=\sum_{i=1}^{3}Q_{M,N}^{i}.
\end{align*}
For the term $Q_{M,N}^{1}$, by (\ref{gamma-N-finite}), we have
\begin{align*}
&\sup_{N}P\left\{\Big|\eta_{2m-|k|,d,N,1}-\eta^{(M)}_{2m-|k|,d,N,1}\Big|>\epsilon\right\}\\
&\qquad\le \frac{1}{\epsilon}\int_{M}^{\infty}(1+u)^{-m-\frac{d}{2}}\sup_{N}E[\gamma_{2m-|k|,N}(u)]du\rightarrow 0,
\end{align*}
 as $M\rightarrow \infty$. From the continuity of $\psi$, we have, as $M\rightarrow \infty$
\begin{align*}
\sup_{N}P\left\{\Big|\psi(\eta_{2m-|k|,d,N,1})-\psi(\eta^{(M)}_{2m-|k|,d,N,1})\Big|>\epsilon\right\}\rightarrow 0.
\end{align*}
Using the dominated convergence theorem, we get
\begin{align*}
\lim_{M\rightarrow\infty}\sup_{N}Q_{M,N}^{1}\le\lim_{M\rightarrow\infty}\sup_{N}E\left[\left|\psi(\eta_{2m-|k|,d,N,1})-\psi(\eta^{(M)}_{2m-|k|,d,N,1})\right|\right]=0.
\end{align*}
Similarly, by (\ref{gamma-finite}), we have
\begin{align*}
\lim_{M\rightarrow\infty}Q_{M,N}^{3}\le\lim_{M\rightarrow\infty}E[|\psi(\vartheta_{2m-|k|,d}^{(M)})-\psi(\vartheta_{2m-|k|,d})|]=0.
\end{align*}
Finally, we handle the term $Q_{M,N}^{2}$. Using Proposition $1$ in \cite{Pecatti}, we have, for fixed $M>0$,
\begin{align*}
\sum_{i=0}^{2^{n}M}\Big(1+\frac{i}{2^{n}}\Big)^{-m-\frac{d}{2}}\gamma_{2m-|k|,N}\Big(\frac{i}{2^{n}}\Big)\xrightarrow{Law} \sum_{i=0}^{2^{n}M}\Big(1+\frac{i}{2^{n}}\Big)^{-m-\frac{d}{2}}\gamma_{2m-|k|}\Big(\frac{i}{2^{n}}\Big)
\end{align*}
when $N$ tends to $\infty$. It is easy to see that, for any $\epsilon>0$,
\begin{align*}
\lim_{n\rightarrow \infty}\lim_{N\rightarrow \infty}P\left\{\Big|\eta_{2m-|k|,d,N,1}^{(M)}-\frac{1}{2^{n}}\sum_{i=0}^{2^{n}M}\Big(1+\frac{i}{2^{n}}\Big)^{-m-\frac{d}{2}}\gamma_{2m-|k|,N}\Big(\frac{i}{2^{n}}\Big)\Big|>\epsilon\right\}=0
\end{align*}
and
\begin{align*}
\lim_{n\rightarrow \infty}P\left\{\Big|
\vartheta_{2m-|k|,d}^{(M)}-\frac{1}{2^{n}}\sum_{i=0}^{2^{n}M}\Big(1+\frac{i}{2^{n}}\Big)^{-m-\frac{d}{2}}\gamma_{2m-|k|}\Big(\frac{i}{2^{n}}\Big)\Big|>\epsilon\right\}=0.
\end{align*}
Combining with the above estimates and Theorem 1.4.2 in \cite{Billingsle}, we have, for fixed $M>0$,  $$\eta_{2m-|k|,d,N,1}^{(M)}\xrightarrow{Law} \vartheta_{2m-|k|,d}^{(M)}\ \  \text{as} \ \ N\rightarrow \infty,$$
which implies
$$\lim_{N\rightarrow\infty}Q^{2}_{M,N}=\lim_{N\rightarrow\infty}\left|E[\psi(\eta_{2m-|k|,d,N,1}^{(M)})-\psi(\vartheta_{2m-|k|,d}^{(M)})]\right|=0.$$
Hence, we obtain (\ref{011}) and the proof is completed.
\end{proof}	

\subsection{The proof of Theorem~\ref{th-sum-convergence}}\label{sec4.4}

In this subsection, we first introduce the following lemma which is given by Hu and Nualart~\cite{Hu-Nualart-05} and helpful in the process of expounding and proving Theorem~\ref{th-sum-convergence}. In other words, we just need to check the four conditions listed in the Lemma~\ref{lemma-Hu-CLT} in order to finish the proof of Theorem~\ref{th-sum-convergence}.

\begin{lemma}\label{lemma-Hu-CLT}
	Consider a sequence of square integrable and centered random variables $\{F_{i},\ i\ge1\}$ with Wiener chaos expansions
	\begin{align*}
		F_{i}=\sum_{n=1}^{\infty}I_{n}(f_{n,i}).
	\end{align*}
Suppose that:

\quad(i) $\lim_{N\rightarrow\infty}\limsup_{i\rightarrow\infty}\sum_{n=N+1}^{\infty}n!\|f_{n,i}\|_{\mathfrak{H}^{\otimes n}}^{2}=0;$\\

\quad(ii)
For every $n\geq1$, $\lim_{i\rightarrow\infty}n!\|f_{n,i}\|_{\mathfrak{H}^{\otimes n}}^{2}=\sigma_{n}^{2}$;\\

\quad(iii) $\sum_{n=1}^{\infty}\sigma_{n}^{2}=\sigma^{2}<\infty$;\\

\quad(iv)
For all $n\geq2$, $p=1,\cdots,n-1$, $\lim_{i\rightarrow\infty}\|f_{n,i}\otimes_{p}f_{n,i}\|_{\mathfrak{H}^{\otimes2(n-p)}}^{2}=0.$\\
Then $F_{i}$ converges in distribution to the normal law $N(0,\sigma^{2})$ as $i$ tends to infinity.
\end{lemma}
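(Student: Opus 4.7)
The plan is to split $F_{i}$ into a finite-chaos part $F_{i}^{N}:=\sum_{n=1}^{N}I_{n}(f_{n,i})$ and a tail $R_{i}^{N}:=\sum_{n=N+1}^{\infty}I_{n}(f_{n,i})$, prove the CLT for $F_{i}^{N}$ as $i\to\infty$ with $N$ fixed, send $N\to\infty$ so that the limiting variances add up to $\sigma^{2}$, and check that the tail cannot perturb the limit. The whole argument is a textbook ``triangular-array'' approximation, whose three ingredients (convergence for fixed $N$, convergence of the $N$-limits, and uniform smallness of the truncation error) are tailored exactly to hypotheses (ii)--(iv), (iii), and (i), respectively.

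First I would handle the finite-chaos part. For each $n\geq1$, condition (ii) gives $E[I_{n}(f_{n,i})^{2}]=n!\|f_{n,i}\|_{\mathfrak{H}^{\otimes n}}^{2}\to\sigma_{n}^{2}$, and condition (iv) together with Lemma~\ref{lem3-7-26} yields $I_{n}(f_{n,i})\xrightarrow{Law}\mathcal{N}(0,\sigma_{n}^{2})$ as $i\to\infty$. To upgrade this to joint convergence of the vector $(I_{1}(f_{1,i}),\dots,I_{N}(f_{N,i}))$ to a Gaussian vector with independent components $\mathcal{N}(0,\sigma_{n}^{2})$, I would invoke the Peccati--Tudor multidimensional CLT (Proposition~1 in \cite{Pecatti}, already used in Section~\ref{sec4.1}): componentwise fourth-moment convergence suffices, since Wiener chaoses of distinct orders are automatically $L^{2}$-orthogonal, so the limiting covariance is diagonal. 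Applying the continuous mapping theorem to the summation map $(x_{1},\dots,x_{N})\mapsto x_{1}+\cdots+x_{N}$ gives
\begin{align*}
F_{i}^{N}\xrightarrow{Law}Y^{N}:=\mathcal{N}\Big(0,\textstyle\sum_{n=1}^{N}\sigma_{n}^{2}\Big), \quad\text{as } i\to\infty.
\end{align*}

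Second, let $N\to\infty$. Condition (iii) says $\sum_{n=1}^{N}\sigma_{n}^{2}\uparrow\sigma^{2}<\infty$, and since a centered normal is determined by its variance, $Y^{N}\xrightarrow{Law}Y:=\mathcal{N}(0,\sigma^{2})$ as $N\to\infty$. For the tail, the orthogonality of Wiener chaoses gives
\begin{align*}
E[(F_{i}-F_{i}^{N})^{2}]=\sum_{n=N+1}^{\infty}n!\|f_{n,i}\|_{\mathfrak{H}^{\otimes n}}^{2},
\end{align*}
and hypothesis (i) is precisely $\lim_{N\to\infty}\limsup_{i\to\infty}E[(F_{i}-F_{i}^{N})^{2}]=0$. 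By Markov's inequality,
\begin{align*}
\lim_{N\to\infty}\limsup_{i\to\infty}P\bigl(|F_{i}-F_{i}^{N}|>\epsilon\bigr)=0\qquad\text{for every }\epsilon>0.
\end{align*}

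Finally I would conclude by the standard approximation result (Theorem~1.4.2 in \cite{Billingsle}, already applied in Section~\ref{sec4.3}): the three statements $F_{i}^{N}\xrightarrow{Law}Y^{N}$ for each $N$, $Y^{N}\xrightarrow{Law}Y$, and the uniform-in-$i$ smallness of $F_{i}-F_{i}^{N}$ together imply $F_{i}\xrightarrow{Law}Y=\mathcal{N}(0,\sigma^{2})$. The only nontrivial step is the first: promoting the componentwise fourth-moment CLT to joint convergence with an \emph{independent} Gaussian limit. This is exactly what the multidimensional fourth-moment theorem of Peccati--Tudor delivers, and hypothesis (iv), combined with the automatic chaos orthogonality across distinct orders, furnishes precisely its hypotheses; so there is no genuine obstacle beyond citing the right tool.
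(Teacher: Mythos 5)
The paper does not actually prove this lemma: it is quoted verbatim from Hu and Nualart \cite{Hu-Nualart-05}, where it is established by exactly the argument you give (truncation to the first $N$ chaoses, the Peccati--Tudor multidimensional fourth-moment theorem with a diagonal limiting covariance coming from orthogonality of distinct chaoses, and the converging-together theorem from \cite{Billingsle} to remove the truncation), so your reconstruction is correct and matches the cited source. The only cosmetic remark is that for $n=1$ neither hypothesis (iv) nor Lemma~\ref{lem3-7-26} (stated for $n\geq2$) is needed or applicable: $I_{1}(f_{1,i})$ is already Gaussian, and condition (ii) alone gives $I_{1}(f_{1,i})\xrightarrow{Law}\mathcal{N}(0,\sigma_{1}^{2})$.
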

\vspace{1mm}

\begin{proof}[Proof of Theorem~\ref{th-sum-convergence}]
	
(i) It follows from that the assertion (ii) in Theorem~\ref{th1} and the assertion (ii) in Theorem~\ref{th1.3-7-26}.

(ii) By the proof of Proposition \ref{pro3.3}, we have
		\begin{align*}
		(2m-|k|)!\|f_{2m-|k|,d,\epsilon}\|^{2}_{\mathfrak{H}^{\otimes n}}\leq\beta_{2m-|k|,d}\phi(m,d,|k|).
		\end{align*}
		Together with Lemma \ref{lem5.3-7-26}, thus
		\begin{align*}
		\sum_{m>\frac{|k|}{2}}\beta_{2m-|k|,d}\phi(m,d,|k|)\leq \sum_{m>\frac{|k|}{2}}\frac{C}{m^{\frac{d}{2}}}<\infty,
		\end{align*}
where $d\geq3$ and $|k|\geq1$. Then it means that the condition (i) and (iii) of Lemma \ref{lemma-Hu-CLT} hold. Proposition \ref{pro3.3} yields (ii) of Lemma \ref{lemma-Hu-CLT}. The assertion (iv) of Lemma \ref{lemma-Hu-CLT} follows from the proof of (iii) in Theorem~\ref{th1.3-7-26}. Hence, the proof is complete.
\end{proof}

\section{Technical lemmas}	
In this section, we prove some technical results that were used to determine the precise asymptotic behaviors for $\alpha_{d}^{(|k|)}(\epsilon)-E[\alpha_{d}^{(|k|)}(\epsilon)]$ and its chaotic components.

\begin{lemma}\label{lastsec-lem2}
	Let the integer $m\geq 2$ be fixed. Then, for any $\epsilon>0$, we have
	\begin{align*}
	 \frac{\epsilon}{(m-\frac{1}{2})(m-\frac{3}{2})}\int_{0<x+y<\frac{1}{\epsilon}}\frac{y^{2m-2}}{(x+y+1)^{m+\frac{1}{2}}(1+\frac{1}{\epsilon}-x)^{m-\frac{3}{2}}}dxdy\leq \frac{1}{(m-\frac{1}{2})(m-\frac{3}{2})^{2}}.
	\end{align*}
\end{lemma}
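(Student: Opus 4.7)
The strategy is to discard the worst factors using crude pointwise bounds, then integrate in the order $dy$ then $dx$. First I would note that since $x,y>0$, we have $x+y+1 \ge y$, and therefore
\begin{align*}
\frac{y^{2m-2}}{(x+y+1)^{m+\frac12}} \;\le\; \frac{y^{2m-2}}{y^{m+\frac12}} \;=\; y^{m-\frac52}.
\end{align*}
Since $m\ge 2$ implies $m-\frac52\ge -\frac12>-1$, the function $y^{m-\frac52}$ is integrable near $0$, which is what makes the next step legal.

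Next I would perform the $y$-integral for each fixed $x\in(0,1/\epsilon)$, over the range $0<y<1/\epsilon-x$:
\begin{align*}
\int_0^{1/\epsilon-x} y^{m-\frac52}\,dy \;=\; \frac{(1/\epsilon-x)^{m-\frac32}}{m-\frac32},
\end{align*}
using $m-\frac32 \ge \frac12>0$. Substituting this back, I would use the obvious inequality $(1/\epsilon-x)^{m-\frac32}\le (1+1/\epsilon-x)^{m-\frac32}$ to cancel the singular denominator $(1+1/\epsilon-x)^{m-\frac32}$ exactly, reducing the outer integral to $\int_0^{1/\epsilon}dx = 1/\epsilon$.

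Combining these steps yields
\begin{align*}
\epsilon\int_{0<x+y<\frac{1}{\epsilon}}\frac{y^{2m-2}\,dx\,dy}{(x+y+1)^{m+\frac12}(1+\frac{1}{\epsilon}-x)^{m-\frac32}} \;\le\; \epsilon\cdot\frac{1}{m-\frac32}\cdot\frac{1}{\epsilon} \;=\; \frac{1}{m-\frac32},
\end{align*}
and dividing by $(m-\frac12)(m-\frac32)$ gives the claimed bound. There is no real obstacle here — the only thing to watch is the integrability at $y=0$ when $m=2$ (the exponent $-\frac12$ is harmless) and, correspondingly, the sign of $m-\frac32$ when applying $(1/\epsilon-x)^{m-3/2}\le (1+1/\epsilon-x)^{m-3/2}$, which holds precisely because $m\ge 2$ makes the exponent nonnegative.
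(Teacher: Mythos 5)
Your proof is correct and is essentially the paper's own argument: the paper first rescales $(x,y)\mapsto(a,b)=(\epsilon x,\epsilon y)$ and then applies exactly your three steps (drop the factor via $a+b+\epsilon\ge b$, integrate $b^{m-5/2}$, and bound $(1-a)^{m-3/2}/(1+\epsilon-a)^{m-3/2}\le 1$). The rescaling is purely cosmetic, so there is no substantive difference.
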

\begin{proof}	
For any $\epsilon>0$, we have
\begin{align*}	 &\frac{\epsilon}{(m-\frac{1}{2})(m-\frac{3}{2})}\int_{0<x+y<\frac{1}{\epsilon}}\frac{y^{2m-2}}{(x+y+1)^{m+\frac{1}{2}}(1+\frac{1}{\epsilon}-x)^{m-\frac{3}{2}}}dxdy\\ &=\frac{1}{(m-\frac{1}{2})(m-\frac{3}{2})}\int_{0<a+b<1}\frac{b^{2m-2}}{(a+b+\epsilon)^{m+\frac{1}{2}}(1+\epsilon-a)^{m-\frac{3}{2}}}dadb\\ &=\frac{1}{(m-\frac{1}{2})(m-\frac{3}{2})}\int_{0}^{1}\int_{0}^{1-a}\frac{b^{2m-2}}{(a+b+\epsilon)^{m+\frac{1}{2}}(1+\epsilon-a)^{m-\frac{3}{2}}}dbda\\
&\leq\frac{1}{(m-\frac{1}{2})(m-\frac{3}{2})}\int_{0}^{1}\int_{0}^{1-a}\frac{b^{m-\frac52}}{(1+\epsilon-a)^{m-\frac{3}{2}}}dbda\\
&=\frac{1}{(m-\frac{1}{2})(m-\frac{3}{2})^2}\int_{0}^{1}\frac{(1-a)^{m-\frac32}}{(1+\epsilon-a)^{m-\frac{3}{2}}}da\\
&\leq\frac{1}{(m-\frac{1}{2})(m-\frac{3}{2})^2}.
\end{align*}
\end{proof}
\begin{lemma}\label{lem5.2-25}
For $d\geq 1$ and $d+|k|\geq 4$, we then have
\begin{equation}\label{sec5-eq5.1-25}
\begin{split}
\int_{\mathbb{R}_{+}^{3}}\frac{b^{2m-|k|}}{(a+b+1)^{m+\frac{d}{2}}(b+c+1)^{m+\frac{d}{2}}}dadbdc= \frac{\Gamma(2m-|k|+1)\Gamma(d+|k|-3)}{(m+\frac{d}{2}-1)^{2}\Gamma(2m+d-2)},
\end{split}
\end{equation}
    and
    \begin{equation}\label{sec5-eq5.2-25}
    \begin{split}
   \int_{\mathbb{R}_{+}^{3}}\frac{b^{2m-|k|}}{(a+b+c+1)^{m+\frac{d}{2}}(b+1)^{m+\frac{d}{2}}}dadbdc=\frac{\Gamma(2m-|k|+1)\Gamma(d+|k|-3)}{(m+\frac{d}{2}-1)(m+\frac{d}{2}-2)\Gamma(2m+d-2)},
    \end{split}
    \end{equation}
    where $2m>|k|$ and $\Gamma(\cdot)$ is the Gamma function.
\end{lemma}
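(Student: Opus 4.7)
My plan is to evaluate both integrals by performing iterated one-dimensional integrations, in each case reducing the problem to the standard Beta integral
\[
\int_{0}^{\infty}\frac{b^{s-1}}{(1+b)^{s+t}}\,db = \frac{\Gamma(s)\Gamma(t)}{\Gamma(s+t)},
\]
valid for $s,t>0$. The rest is bookkeeping to check that all intermediate integrals converge under the hypotheses $d+|k|\geq 4$ and $2m>|k|$.

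For~\eqref{sec5-eq5.1-25}, the key observation is that the variables $a$ and $c$ appear in separate factors, so the three-fold integral factorises. I would first fix $b>0$ and compute
\[
\int_{0}^{\infty}\frac{da}{(a+b+1)^{m+\frac{d}{2}}} = \frac{1}{(m+\frac{d}{2}-1)(b+1)^{m+\frac{d}{2}-1}},
\]
and the identical formula for the $c$-integration. This leaves
\[
\frac{1}{(m+\frac{d}{2}-1)^{2}}\int_{0}^{\infty}\frac{b^{2m-|k|}}{(b+1)^{2m+d-2}}\,db,
\]
which is a Beta integral with $s=2m-|k|+1$ (positive since $2m>|k|$) and $t=d+|k|-3$ (positive since $d+|k|\geq 4$), producing the claimed right-hand side.

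For~\eqref{sec5-eq5.2-25}, I would integrate in the order $a$, then $c$, then $b$. Integrating $a$ first yields
\[
\int_{0}^{\infty}\frac{da}{(a+b+c+1)^{m+\frac{d}{2}}} = \frac{1}{(m+\frac{d}{2}-1)(b+c+1)^{m+\frac{d}{2}-1}},
\]
and integrating $c$ next uses the shift invariance in $c$:
\[
\int_{0}^{\infty}\frac{dc}{(b+c+1)^{m+\frac{d}{2}-1}} = \frac{1}{(m+\frac{d}{2}-2)(b+1)^{m+\frac{d}{2}-2}}.
\]
Combining the two $(b+1)$-factors gives $(b+1)^{-(2m+d-2)}$, so the remaining $b$-integral is exactly the same Beta integral as above, producing $\Gamma(2m-|k|+1)\Gamma(d+|k|-3)/\Gamma(2m+d-2)$.

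There is no real obstacle here; the only care needed is the convergence of the $c$-integral in the second identity, which requires $m+\frac{d}{2}-1>1$, i.e.\ $m+\frac{d}{2}>2$. Under the standing assumptions $d+|k|\geq 4$ and $2m>|k|$, one checks case-by-case that $m\geq 2$ whenever $d\leq 2$, and $m\geq 1$ together with $d\geq 3$ otherwise, so $m+d/2>2$ always holds. The Beta integral parameters $2m-|k|+1$ and $d+|k|-3$ are likewise positive, so the formal manipulations are fully justified.
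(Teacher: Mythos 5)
Your proof is correct and follows essentially the same route as the paper: integrate out $a$ and $c$ one at a time (the paper does $c$ then $a$ for the first integral and notes the second is analogous), reducing each identity to the Beta integral $\int_0^\infty b^{2m-|k|}(1+b)^{-(2m+d-2)}db=\Gamma(2m-|k|+1)\Gamma(d+|k|-3)/\Gamma(2m+d-2)$, which the paper cites from Gradshteyn--Ryzhik. Your explicit verification that $m+\frac{d}{2}>2$ (needed for the $c$-integration in the second identity) is a small point of care the paper leaves implicit.
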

\begin{proof}
	The elementary calculations lead to
	\begin{align*}
	&\int_{\mathbb{R}_{+}^{3}}\frac{b^{2m-|k|}}{(a+b+1)^{m+\frac{d}{2}}(b+c+1)^{m+\frac{d}{2}}}dadbdc\\
	&=\int_{\mathbb{R}_{+}^{2}}\frac{b^{2m-|k|}}{(m+\frac{d}{2}-1)(a+b+1)^{m+\frac{d}{2}}(b+1)^{m+\frac{d}{2}-1}}dadb\\
	&=\int_{\mathbb{R}_{+}}\frac{b^{2m-|k|}}{(m+\frac{d}{2}-1)^{2}(b+1)^{2m+d-2}}db.
	\end{align*}
By \cite{Gradstein} (p.322, 3.241.4$^{11}$ with $p=q=\nu=1$, replace $\mu$ with $2m-|k|+1$ and $n$ with $2m+d-3$), we have
    \begin{align*}\label{gamma}
    	\int_{\mathbb{R}_{+}}\frac{b^{2m-|k|}}{(b+1)^{2m+d-2}}db=\frac{\Gamma(2m-|k|+1)\Gamma(d+|k|-3)}{\Gamma(2m+d-2)}.
    \end{align*}
    The above two identities yield~\eqref{sec5-eq5.1-25}. With the same arguments, we can get \eqref{sec5-eq5.2-25}.
\end{proof}

\begin{lemma}\label{lem5.3-7-26}
For $d\geq 1$ and $|k|\geq 1$, there exist positive constants $c$ and $C$ such that
		\begin{align*}
			c\leq \lim_{m\rightarrow\infty}\frac{\beta_{2m-|k|,d}}{m^{|k|+\frac{d}{2}-1}}\leq C,
		\end{align*}
where $m\in\mathbb{N}$, $2m>|k|$ and
        \begin{align*}
		\beta_{2m-|k|,d}:=\sum_{\substack{m_{1}+\cdots+m_{d}=m\\m_{j}\ge\frac{k_{j}}{2},\ j=1,\cdots,d}}\frac{((2m_{1})!\cdots(2m_{d})!)^{2}}{(2\pi)^{d}2^{2m}(2m_{1}-k_{1})!\cdots(2m_{d}-k_{d})!(m_{1}!)^{2}\cdots(m_{d}!)^{2}}.
		\end{align*}			
\end{lemma}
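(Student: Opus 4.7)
The strategy is to apply Stirling's formula to each summand of $\beta_{2m-|k|,d}$ and then recognize the resulting sum as a discrete Dirichlet integral over the simplex $\{m_1+\cdots+m_d=m,\ m_j\geq k_j/2\}$.

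\emph{Step 1 (pointwise asymptotics of a single summand).} For each $j$ write
\[
\frac{((2m_j)!)^2}{(2m_j-k_j)!(m_j!)^2}=\frac{(2m_j)!}{(2m_j-k_j)!}\cdot\frac{(2m_j)!}{(m_j!)^2}.
\]
The first factor equals the falling product of length $k_j$, hence is $(2m_j)^{k_j}(1+O(1/m_j))$; the second is the central binomial coefficient $\binom{2m_j}{m_j}\sim 4^{m_j}/\sqrt{\pi m_j}$ by Stirling. Taking the product over $j=1,\ldots,d$ and dividing by $(2\pi)^d 2^{2m}$ (note the $4^{m_j}$'s exactly absorb $2^{2m}$), one obtains
\[
\frac{((2m_1)!\cdots(2m_d)!)^2}{(2\pi)^{d}2^{2m}(2m_1-k_1)!\cdots(2m_d-k_d)!(m_1!)^2\cdots(m_d!)^2}\sim C_{d,|k|}\prod_{j=1}^{d}m_j^{k_j-\frac{1}{2}},
\]
uniformly on the bulk region $\{m_j\geq\delta m:\ j=1,\ldots,d\}$ for any fixed $\delta>0$, where $C_{d,|k|}=2^{|k|-d}\pi^{-3d/2}>0$.

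\emph{Step 2 (Riemann-sum / Dirichlet integral).} With exponents $\alpha_j=k_j-\tfrac12>-\tfrac12>-1$, the Dirichlet integral
\[
\int_{\{y_j\geq 0,\ y_1+\cdots+y_{d-1}\leq 1\}}\Bigl(\prod_{j=1}^{d-1}y_j^{\alpha_j}\Bigr)(1-y_1-\cdots-y_{d-1})^{\alpha_d}\,dy_1\cdots dy_{d-1}=\frac{\prod_{j=1}^{d}\Gamma(\alpha_j+1)}{\Gamma(|\alpha|+d)}
\]
is finite and strictly positive. Substituting $m_j=m y_j$ converts the lattice sum into a Riemann sum with mesh $1/m$, which yields
\[
\sum_{\substack{m_1+\cdots+m_d=m\\ m_j\geq k_j/2}}\prod_{j=1}^{d}m_j^{k_j-\frac{1}{2}}\sim m^{|k|+\frac{d}{2}-1}\cdot\frac{\prod_{j}\Gamma(k_j+\tfrac12)}{\Gamma(|k|+\tfrac{d}{2})}.
\]
Combined with Step 1, this gives the stronger conclusion
\[
\lim_{m\to\infty}\frac{\beta_{2m-|k|,d}}{m^{|k|+\frac{d}{2}-1}}=C_{d,|k|}\cdot\frac{\prod_{j}\Gamma(k_j+\tfrac12)}{\Gamma(|k|+\tfrac{d}{2})}\in(0,\infty),
\]
which is more than enough to deliver the two-sided bound $c\leq\cdot\leq C$ claimed in the lemma.

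\emph{Main obstacle.} The delicate point is that the Stirling approximation in Step 1 degenerates when some coordinate $m_j$ is of lower order than $m$, so the asymptotic is not uniform all the way to the boundary of the simplex. I would handle this by a standard bulk/boundary splitting: on the bulk $\{m_j\geq\delta m\ \text{for all}\ j\}$ Steps 1--2 give matching upper and lower bounds; on the boundary piece $\{\min_j m_j<\delta m\}$ one estimates each summand crudely using $\binom{2m_j}{m_j}\leq 4^{m_j}$ and $(2m_j)!/(2m_j-k_j)!\leq (2m_j)^{k_j}$, so that the contribution is controlled by a Riemann sum for $\int_{\Delta\cap\{\min x_j<\delta\}}\prod x_j^{k_j-1/2}dx$, which tends to $0$ as $\delta\downarrow 0$ by the integrability secured in Step 2. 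Passing $\delta\downarrow 0$ after $m\to\infty$ then recovers the full Dirichlet constant and completes the proof.
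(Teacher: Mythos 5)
Your proposal is correct and follows essentially the same route as the paper: extract the falling factorial $(2m_j)!/(2m_j-k_j)!\sim(2m_j)^{k_j}$, apply Stirling to the central binomial factor $(2m_j)!/((m_j!)^2 2^{2m_j})\sim(\pi m_j)^{-1/2}$, and identify the remaining sum $\sum\prod_j m_j^{k_j-1/2}$ as a Riemann sum for a Dirichlet integral of order $m^{|k|+\frac{d}{2}-1}$ (the paper does not compute the explicit limit constant, but otherwise the decomposition is the same). One small repair in your boundary step: the crude bound $\binom{2m_j}{m_j}\le 4^{m_j}$ discards exactly the factor $m_j^{-1/2}$ that makes the boundary contribution small, but replacing it with the uniform bound $\binom{2m_j}{m_j}4^{-m_j}\le(\pi m_j)^{-1/2}$ for $m_j\ge1$ serves the same purpose, so the argument goes through.
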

\begin{proof}
	By the definition of $\beta_{2m-|k|,d}$, we have
    \begin{align*}
    	&\beta_{2m-|k|,d}
    	=\sum_{\substack{m_{1}+\cdots+m_{d}=m\\m_{j}\ge\frac{k_{j}}{2},\ j=1,\cdots,d}}F_1(m_1)F_2(m_2)\cdots F_d(m_d)\frac{(2m_{1})!\cdots(2m_{d})!}{(m_{1}!)^{2}\cdots(m_{d}!)^{2}2^{2m}},
    \end{align*}
    where
    $$
    F_j(m_j)=\left\{
\begin{array}{ll}
(2m_{j})(2m_j-1)\cdots(2m_{j}-k_{j}+1),&\quad {\text { $k_j\geq1$}},\\
1,&\quad {\text { $k_j=0$}},
\end{array}
\right.
    $$
and it is trivial that $\beta_{2m-|k|,d}$ is a polynomial function of a single independent variable $m$. It is not hard to see that
 $$
\lim_{m\rightarrow\infty}\frac{(2m)!\sqrt{m}}{(m!)^22^{2m}}=\frac{1}{\sqrt{\pi}},
$$
and there exists positive constant $\hat{c}$ such that
 $$
\frac{(2m)!\sqrt{m}}{(m!)^22^{2m}}\geq\hat{c}>0,
 $$
for any integer $m\geq1$. Now our problem is reduced to
$$
\lim_{m\rightarrow\infty}\frac{1}{m^{|k|+\frac{d}{2}-1}}\sum_{\substack{m_{1}+\cdots+m_{d}=m\\m_{j}\ge\frac{k_{j}}{2},\ j=1,\cdots,d}}
\frac{F_1(m_1)F_2(m_2)\cdots F_d(m_d)}{\sqrt{m_1}\sqrt{m_2}\cdots\sqrt{m_d}}=C.
$$
When $k_j>1$ for $j=1,2,\ldots,d$, we can write
\begin{equation}\label{sec5-5.3-7-26}
\begin{split}
    	&\frac{1}{m^{|k|+\frac{d}{2}-1}}\sum_{\substack{m_{1}+\cdots+m_{d}=m\\m_{j}\ge\frac{k_{j}}{2},\ j=1,\cdots,d}}\frac{F_1(m_1)F_2(m_2)\cdots F_d(m_d)}{\sqrt{m_1}\sqrt{m_2}\cdots\sqrt{m_d}}\\
    	&=\frac{C}{m^{|k|+\frac{d}{2}-1}}\sum_{\substack{m_{1}+\cdots+m_{d}=m\\m_{j}\ge\frac{k_{j}}{2},\ j=1,\cdots,d}}\frac{m_{1}^{k_{1}}\cdots m_{d}^{k_{d}}}{\sqrt{m_{1}\cdots m_{d}}}\\
    	&\quad+\frac{C}{m^{|k|+\frac{d}{2}-1}}\sum_{\substack{m_{1}+\cdots+m_{d}=m\\m_{j}\ge\frac{k_{j}}{2},\ j=1,\cdots,d}}\frac{(\sum_{i_{1}=1}^{k_{1}-1}(-1)^{i_{1}}C_{i_{1}}m_{1}^{k_{1}-i_{1}})\cdots(\sum_{i_{d}=1}^{k_{d}-1}(-1)^{i_{d}}C_{i_{d}}m_{d}^{k_{d}-i_{d}})}{\sqrt{m_{1}\cdots m_{d}}}\\
    	&=\frac{C}{m^{|k|+\frac{d}{2}-1}}\sum_{\substack{m_{1}+\cdots+m_{d}=m\\m_{j}\ge\frac{k_{j}}{2},\ j=1,\cdots,d}}m_{1}^{k_{1}-\frac{1}{2}}\cdots m_{d}^{k_{d}-\frac{1}{2}}\\
    	&\quad+\frac{C}{m^{|k|+\frac{d}{2}-1}}\sum_{\substack{m_{1}+\cdots+m_{d}=m\\m_{j}\ge\frac{k_{j}}{2},\ j=1,\cdots,d}}\sum_{i_{1}=1}^{k_{1}-1}\cdots\sum_{i_{d}=1}^{k_{d}-1}(-1)^{i_{1}+\cdots+i_{d}}C_{i_{1}}\cdots C_{i_{d}}m_{1}^{k_{1}-i_{1}-\frac{1}{2}}\cdots m_{d}^{k_{d}-i_{d}-\frac{1}{2}}.
   \end{split}
    \end{equation}
When $k_{j}=0\text{ or }1$ for $j\in\{1,2,\ldots,d\}$, we replace $\sum_{i_{j}=1}^{k_{j}-1}(-1)^{i_{j}}C_{i_{j}}m_{j}^{k_{j}-i_{j}}$ by $1$ in~\eqref{sec5-5.3-7-26}, and the equality~\eqref{sec5-5.3-7-26} still holds. The first term of the last equality is asymptotically equivalent with the sequence of integrals
    \begin{align*}
    	&\frac{C}{m^{|k|+\frac{d}{2}-1}}\int_{\{x_{j}\geq \frac{k_{j}}{2},\ j=1,\cdots,d-1,\ x_{1}+\cdots+x_{d-1}\leq m\}}x_{1}^{k_{1}-\frac{1}{2}}\cdots x_{d-1}^{k_{d-1}-\frac{1}{2}}\\
     &\qquad\qquad\qquad\qquad\quad\times(m-x_{1}-\cdots-x_{d-1})^{k_{d}-\frac{1}{2}}dx_{1}\cdots dx_{d-1}\\
    	&=\frac{Cm^{|k|-\frac{d}{2}+d-1}}{m^{|k|+\frac{d}{2}-1}}\int_{\{x_{j}\geq \frac{k_{j}}{2m},\ j=1,\cdots,d-1,\ x_{1}+\cdots+x_{d-1}\leq 1\}}x_{1}^{k_{1}-\frac{1}{2}}\cdots x_{d-1}^{k_{d-1}-\frac{1}{2}}\\
      &\qquad\qquad\qquad\qquad\quad\times(1-x_{1}-\cdots-x_{d-1})^{k_{d}-\frac{1}{2}}dx_{1}\cdots dx_{d-1}\\
    	&\rightarrow C,
    \end{align*}
    as $m\rightarrow\infty$ due to the fact that the last integral is finite. Similarly, the second term can be written
    \begin{align*}
    	&\frac{C}{m^{|k|+\frac{d}{2}-1}}\int_{\{x_{j}\geq\frac{k_{j}}{2},\ j=1,\cdots,d-1,\ x_{1}+\cdots+x_{d-1}\leq m\}}\sum_{i_{1}=1}^{k_{1}-1}\cdots\sum_{i_{d}=1}^{k_{d}-1}(-1)^{i_{1}+\cdots+i_{d}}C_{i_{1}}\cdots C_{i_{d}}x_{1}^{k_{1}-i_{1}-\frac{1}{2}}\\
     &\qquad\qquad\qquad\ \ \ \ \ \ \ \ \ \ \times\cdots x_{d-1}^{k_{d-1}-i_{d-1}-\frac{1}{2}}
    	(m-x_{1}-\cdots-x_{d-1})^{k_{d}-i_{d}-\frac{1}{2}}dx_{1}\cdots dx_{d-1}\\    	    	
    	&=\sum_{i_{1}=1}^{k_{1}-1}\cdots \sum_{i_{d}=1}^{k_{d}-1}\frac{(-1)^{i_{1}+\cdots+i_{d}}}{m^{i_{1}+\cdots+i_{d}}}\int_{\{x_{j}\geq \frac{k_{j}}{2m},\ j=1,\cdots,d-1,\ x_{1}+\cdots+x_{d-1}\leq 1\}}x_{1}^{k_{1}-i_{1}-\frac{1}{2}}\cdots x_{d-1}^{k_{d-1}-i_{d-1}-\frac{1}{2}}\\
    	&\qquad\qquad\qquad\ \ \ \ \ \ \ \ \ \ \times(1-x_{1}-\cdots-x_{d-1})^{k_{d}-i_{d}-\frac{1}{2}}dx_{1}\cdots dx_{d-1}\\
     &\rightarrow0,
    \end{align*}
    as $m\rightarrow\infty$. Summing up the above arguments, there exist positive constants $c$ and $C$ such that
    \begin{align*}
    	c\leq \lim_{m\rightarrow\infty}\frac{\beta_{2m-|k|,d}}{m^{|k|+\frac{d}{2}-1}}\leq C.
    \end{align*}
    Thus we complete the proof.
\end{proof}

\begin{lemma}\label{lem4-7-26}
	Let the integer $m\geq 2$ be fixed. $\eta_{2m-2,1,N,1}$ and $\zeta_{2m-2,1,N}$ are respectively defined by
	\begin{align*}
	\eta_{2m-2,1,N,1}:=\frac{1}{\sqrt{N\log N}}\int_{0}^{\infty}\int_{0}^{N}(1+u)^{-m-\frac{1}{2}}\prod_{i=1}^{2m-2}(B_{s+u}^{i}-B_{s}^{i})dsdu,
	\end{align*}
	and
	\begin{align}\label{zeta}
	\zeta_{2m-2,1,N}:=\frac{1}{\sqrt{N\log N}}\int_{1}^{\infty}\int_{0}^{N}u^{-m-\frac{1}{2}}\prod_{i=1}^{2m-2}(B_{s+u}^{i}-B_{s}^{i})dsdu.
	\end{align}
	Then we have
	\begin{align*}
	E[|\eta_{2m-2,1,N,1}-\zeta_{2m-2,1,N}|]\to 0, \ \ \text{as}\ \ N\to\infty.
	\end{align*}
\end{lemma}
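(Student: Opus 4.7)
My plan is to write the difference as a single $u$-integral against the stochastic object
$$X_N(u):=\int_0^N\prod_{i=1}^{2m-2}(B^i_{s+u}-B^i_s)\,ds,$$
apply Minkowski in $L^1$, bound $\|X_N(u)\|_{L^2}$ by computing its variance explicitly, and then show that each piece of the resulting deterministic integral is $o(1)$. Splitting the outer $u$-integral at $u=1$ gives
$$\eta_{2m-2,1,N,1}-\zeta_{2m-2,1,N}=\frac{1}{\sqrt{N\log N}}\Bigl(A_N+B_N\Bigr),$$
where
$$A_N:=\int_0^1(1+u)^{-m-1/2}X_N(u)\,du,\qquad B_N:=\int_1^\infty\bigl[(1+u)^{-m-1/2}-u^{-m-1/2}\bigr]X_N(u)\,du.$$

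The key estimate is the variance computation. By independence of the coordinate Brownian motions,
$$E[X_N(u)^2]=\int_0^N\!\!\int_0^N\bigl(E[(B_{s_1+u}-B_{s_1})(B_{s_2+u}-B_{s_2})]\bigr)^{2m-2}ds_1ds_2=\int_0^N\!\!\int_0^N(u-|s_1-s_2|)_+^{2m-2}ds_1ds_2,$$
which yields $\|X_N(u)\|_{L^2}\le C\sqrt{N}\,u^{m-1/2}$ when $0<u\le N$ and $\|X_N(u)\|_{L^2}\le C\,N\,u^{m-1}$ when $u>N$ (the second bound comes from the trivial pointwise estimate $\|\prod_{i=1}^{2m-2}(B^i_{s+u}-B^i_s)\|_{L^2}=u^{m-1}$). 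These are precisely the ingredients needed to neutralize the singularity at $u=0$ (for $A_N$) and the integrability at $u=\infty$ (for $B_N$).

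For $A_N$, Minkowski gives
$$\frac{E|A_N|}{\sqrt{N\log N}}\le\frac{1}{\sqrt{N\log N}}\int_0^1(1+u)^{-m-1/2}\|X_N(u)\|_{L^2}du\le\frac{C}{\sqrt{\log N}}\int_0^1 u^{m-1/2}du,$$
which vanishes as $N\to\infty$. For $B_N$, I use the mean value estimate $|(1+u)^{-m-1/2}-u^{-m-1/2}|\le C u^{-m-3/2}$ valid on $[1,\infty)$, then split the $u$-range at $u=N$: on $[1,N]$ the bound $\|X_N(u)\|_{L^2}\le C\sqrt{N}u^{m-1/2}$ yields an integrand of order $u^{-2}\sqrt{N}$, giving a total contribution $O(1/\sqrt{\log N})$; on $[N,\infty)$ the bound $\|X_N(u)\|_{L^2}\le CN u^{m-1}$ yields $N u^{-5/2}$, whose integral is $O(N^{-1/2})$ and hence $O(N^{-1}/\sqrt{\log N})$ after the prefactor.

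The main obstacle, and the only place where care is required, is the variance bound for $X_N(u)$: one must track the correct power of $u$ in the regime $u\le N$ so that the cancellation $(1+u)^{-m-1/2}-u^{-m-1/2}$ produces an extra $u^{-1}$ that makes the $u$-integral logarithmic and absorbs the $1/\sqrt{\log N}$ prefactor down to zero rather than to a constant. Once the two regimes $u\le N$ and $u>N$ are handled separately with the sharp bounds above, combining the estimates through $E|\eta_{2m-2,1,N,1}-\zeta_{2m-2,1,N}|\le(E|A_N|+E|B_N|)/\sqrt{N\log N}$ gives the desired convergence.
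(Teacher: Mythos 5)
Your argument is correct and follows essentially the same route as the paper: the same splitting of the $u$-integral at $u=1$, the same Minkowski/$L^2$ reduction to the variance of $X_N(u)=\int_0^N\prod_i(B^i_{s+u}-B^i_s)\,ds$, and the same covariance computation giving $\|X_N(u)\|_{L^2}\le C\sqrt{N}\,u^{m-1/2}$. The only cosmetic difference is in the tail term: you bound $|(1+u)^{-m-1/2}-u^{-m-1/2}|$ by the mean value theorem and split at $u=N$, while the paper factors it as $u^{-m-1/2}\bigl(1-(u/(1+u))^{m+1/2}\bigr)$ and uses the single $\sqrt{N}\,u^{m-1/2}$ bound throughout; both yield the same $O(1/\sqrt{\log N})$ rate.
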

\begin{proof}
	By the definition of $\eta_{2m-2,1,N,1}$ and $\zeta_{2m-2,1,N}$, we deduce that
	\begin{align*}
	&E[|\eta_{2m-2,1,N,1}-\zeta_{2m-2,1,N}|]\\
    &\leq \frac{1}{\sqrt{N\log N}}E\left[\int_{0}^{1}(1+u)^{-m-\frac{1}{2}} \Big|\int_{0}^{N}\prod_{i=1}^{2m-2}(B_{s+u}^{i}-B_{s}^{i})ds\Big|du\right]\\
	&\quad +\frac{1}{\sqrt{N\log N}}E\left[\int_{1}^{\infty}\Big(\frac{1}{u^{m+\frac{1}{2}}}-\frac{1}{(1+u)^{m+\frac{1}{2}}}\Big)\Big|\int_{0}^{N}\prod_{i=1}^{2m-2}(B_{s+u}^{i}-B_{s}^{i})ds\Big|du\right]\\
	&:=Q_{1}(N)+Q_{2}(N).
	\end{align*}
	For the first term $Q_{1}(N)$, making the change of variable $x=s_{2}-s_{1}$, we have
	\begin{equation}\label{22}
	\begin{split}
	Q_{1}(N)&\leq\frac{1}{\sqrt{N\log N}}\int_{0}^{1}\frac{1}{(1+u)^{m+\frac{1}{2}}}\left(E\Big[(\int_{0}^{N}\prod_{i=1}^{2m-2}(B_{s+u}^{i}-B_{s}^{i})ds)^{2}\Big]\right)^{\frac{1}{2}}du\\
	&\leq\frac{\sqrt{2}}{\sqrt{N\log N}}\int_{0}^{1}\frac{1}{(1+u)^{m+\frac{1}{2}}}\left(\int_{0}^{N}\int_{0}^{s_{2}}K(s_{1},s_{1}+u,s_{2},s_{2}+u)^{2m-2}ds_{1}ds_{2}\right)^{\frac{1}{2}}du\\
	&=\frac{\sqrt{2}}{\sqrt{\log N}}\int_{0}^{1}\frac{1}{(1+u)^{m+\frac{1}{2}}}\left(\int_{0}^{u}(u-x)^{2m-2}\Big(1-\frac{x}{N}\Big)dx\right)^{\frac{1}{2}}du\\
	&\leq\frac{\sqrt{2}}{\sqrt{(2m-1)\log N}}\int_{0}^{1}\frac{u^{m-\frac{1}{2}}}{(1+u)^{m+\frac{1}{2}}}du\leq\frac{\sqrt{2}\log 2}{\sqrt{(2m-1)\log N}}. 		
	\end{split}
	\end{equation}
	Similarly, we handle the term $Q_{2}(N)$,
	\begin{equation}\label{21}
	\begin{split}
	Q_{2}(N)&\leq\frac{\sqrt{2}}{\sqrt{(2m-1)\log N}}\int_{1}^{\infty}\left(\frac{1}{u^{m+\frac{1}{2}}}-\frac{1}{(1+u)^{m+\frac{1}{2}}}\right)u^{m-\frac{1}{2}}du\\
	&=\frac{\sqrt{2}}{\sqrt{(2m-1)\log N}}\int_{1}^{\infty}\frac{1}{u}\left(1-(\frac{u}{1+u})^{m+\frac{1}{2}}\right)du\\
	&\leq\frac{\sqrt{2}}{\sqrt{(2m-1)\log N}}\int_{1}^{\infty}\frac{m+\frac{1}{2}}{u(1+u)}du\le\frac{\sqrt{2}(m+\frac{1}{2})}{\sqrt{(2m-1)\log N}}.		
	\end{split}
	\end{equation}
	Thus the lemma follows from~(\ref{22}) and (\ref{21}).
\end{proof}
\begin{lemma}\label{lem6}
	Let $m\in\mathbb{N}$, $m\geq2$ be fixed, and $\zeta_{2m-2,1,N}$ is defined in (\ref{zeta}). Then we have
	\begin{align*}
	\lim_{N\to\infty}E[\zeta_{2m-2,1,N}^{2}]=\bar{\sigma}_{2m-2,1}^{2},
	\end{align*}
where $\bar{\sigma}_{2m-2,1}^{2}=2\Big(\frac{1}{(m-\frac{1}{2})^{2}}+\frac{1}{(m-\frac{1}{2})(m-\frac{3}{2})}\Big)$.
\end{lemma}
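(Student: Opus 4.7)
The plan is to compute $E[\zeta_{2m-2,1,N}^2]$ directly using the independence of the coordinate Brownian motions and the explicit form of the overlap function $\mu$ in~(\ref{5-16}), and then to isolate the leading $N\log N$ contribution.

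First, by the independence of $B^1,\dots,B^{2m-2}$ and Fubini's theorem,
\begin{align*}
E[\zeta_{2m-2,1,N}^{2}]=\frac{1}{N\log N}\int_{1}^{\infty}\int_{1}^{\infty}u_{1}^{-m-\frac{1}{2}}u_{2}^{-m-\frac{1}{2}}\int_{0}^{N}\int_{0}^{N}K(s_{1},s_{1}+u_{1},s_{2},s_{2}+u_{2})^{2m-2}ds_{1}ds_{2}du_{1}du_{2}.
\end{align*}
I then split the $(s_1,s_2)$-integration at $s_1=s_2$, use the identity $K(s_{1},s_{1}+u_{1},s_{2},s_{2}+u_{2})=\mu(s_{2}-s_{1},u_{1},u_{2})$ for $s_1<s_2$ (and with $u_1,u_2$ swapped for $s_1>s_2$), symmetrize via $u_1\leftrightarrow u_2$, and substitute $x=|s_2-s_1|$ to obtain
\begin{align*}
E[\zeta_{2m-2,1,N}^{2}]=\frac{2}{N\log N}\int_{1}^{\infty}\int_{1}^{\infty}u_{1}^{-m-\frac{1}{2}}u_{2}^{-m-\frac{1}{2}}\int_{0}^{N}(N-x)\mu(x,u_{1},u_{2})^{2m-2}dx\,du_{1}du_{2}.
\end{align*}

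Next, using $\mu(x,u_1,u_2)=\min(u_2,u_1-x)\cdot 1_{\{0<x<u_1\}}$ from~(\ref{5-16}), I split the $(u_1,u_2)$-domain into regions (i) $\{u_1\le u_2\}$ and (ii) $\{u_2<u_1\}$. In region (i), $\mu=u_1-x$ throughout $[0,u_1]$, and for $u_1\le N$ the substitution $y=u_1-x$ gives the inner integral equal to $(N-u_1)u_1^{2m-1}/(2m-1)+u_1^{2m}/(2m)$. In region (ii), $\mu$ equals $u_2$ on $[0,u_1-u_2]$ and $u_1-x$ on $[u_1-u_2,u_1]$, yielding a plateau piece $u_2^{2m-2}[(u_1-u_2)N-(u_1-u_2)^2/2]$ together with a slope piece $(N-u_1)u_2^{2m-1}/(2m-1)+u_2^{2m}/(2m)$.

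Then I extract the leading $N\log N$ behavior by integrating first over $u_2$ against $u_2^{-m-1/2}$ and then over $u_1\in[1,N]$ against $u_1^{-m-1/2}$. The $\log N$ factor arises only from the kernel $(N-u_1)/u_1$, whose integral over $[1,N]$ equals $N\log N+O(N)$. In region (i), the $(N-u_1)u_1^{2m-1}/(2m-1)$ piece produces $N\log N/[2(m-1/2)^2]$. In region (ii), the plateau piece produces $N\log N/[(m-1/2)(m-3/2)]$ after combining the resulting terms, while the $(N-u_1)u_2^{2m-1}/(2m-1)$ subterm of the slope piece produces another $N\log N/[2(m-1/2)^2]$. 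All remaining pieces contribute $O(N)$, and the tail regime $u_1>N$ (where the $x$-range is truncated at $N$ rather than at $u_1$) is bounded separately using $\mu\le u_1\wedge u_2$ to contribute $O(N)$ as well. Summing and multiplying by $2/(N\log N)$ yields $2[1/(m-1/2)^2+1/((m-1/2)(m-3/2))]=\bar{\sigma}_{2m-2,1}^{2}$.

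The hard part will be the bookkeeping in region (ii), where several subterms with different scalings appear: one must carefully identify which subterms yield a genuine $\log N$ from $\int_1^N u_1^{-1}du_1$, as opposed to the bounded $\int_1^N u_1^{-m+1/2}du_1$ and $\int_1^N u_1^{-m-1/2}du_1$ (convergent for $m\ge 2$). The tail $u_1>N$ also requires a separate estimate since on that domain the piecewise structure of $\mu(\cdot,u_1,u_2)$ on $[0,N]$ differs from its structure on $[0,u_1]$, and one must verify that no extra logarithmic factor is generated there.
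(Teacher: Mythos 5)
Your computation is correct and arrives at the right constant, but the way you extract the limit differs from the paper. Both arguments begin from the same representation
\begin{align*}
E[\zeta_{2m-2,1,N}^{2}]=\frac{2}{N\log N}\int_{1}^{\infty}\int_{1}^{\infty}(u_{1}u_{2})^{-m-\frac{1}{2}}\int_{0}^{N}(N-x)\,\mu(x,u_{1},u_{2})^{2m-2}\,dx\,du_{1}\,du_{2},
\end{align*}
obtained by independence, the substitution $x=|s_2-s_1|$ and the $u_1\leftrightarrow u_2$ symmetrization. From there the paper discards the $x/N$ correction, applies L'H\^{o}pital's rule in $N$ (differentiating the inner integral produces $\mu(N,u_1,u_2)^{2m-2}$), and then uses the homogeneity $\mu(\lambda x,\lambda u_1,\lambda u_2)=\lambda\,\mu(x,u_1,u_2)$ to reduce the limit to the single convergent integral $2\int_0^\infty\int_0^\infty(u_1u_2)^{-m-\frac12}\mu(1,u_1,u_2)^{2m-2}du_1du_2$, which it evaluates in closed form via the two antiderivative identities recorded at the end of its proof. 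You instead evaluate the inner $x$-integral explicitly on the regions $u_1\le u_2$ and $u_2<u_1$ and track by hand which sub-terms generate $\int_1^N(N-u_1)u_1^{-1}du_1=N\log N+O(N)$; I checked your ledger and it balances: the two contributions of $N\log N/[2(m-\frac12)^2]$ plus the plateau contribution $N\log N\big(\frac{1}{m-3/2}-\frac{1}{m-1/2}\big)=N\log N/[(m-\frac12)(m-\frac32)]$ give exactly $\bar\sigma_{2m-2,1}^2$ after multiplying by $2/(N\log N)$, and the remaining pieces and the tail $u_1>N$ are indeed $O(N)$. The paper's route is shorter and hides the error analysis inside L'H\^{o}pital and a dominated-convergence step for the rescaled integral; your route is longer but entirely explicit, and it has the merit of making visible the $O(N)$ remainders (including the neglected $(1-x/N)$ weight and the $u_1>N$ tail) that the paper passes over without comment.
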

\begin{proof}
	By the definition of $\zeta_{2m-2,1,N}$, changing the variable $x=s_{2}-s_{1}$, we deduce that
\begin{align*}
&E[\zeta_{2m-2,1,N}^{2}]\\
&=\frac{2}{N\log N}\int_{1}^{\infty}\int_{1}^{\infty}(u_{1}u_{2})^{-m-\frac{1}{2}}\int_{0}^{N}\int_{0}^{s_{2}}K(s_{1},s_{1}+u_{1},s_{2},s_{2}+u_{2})^{2m-2}ds_{1}du_{1}ds_{2}du_{2}\\
&=\frac{2}{\log N}\int_{1}^{\infty}\int_{1}^{\infty}(u_{1}u_{2})^{-m-\frac{1}{2}}\int_{0}^{N}\mu(x,u_{1},u_{2})^{2m-2}\Big(1-\frac{x}{N}\Big)dxdu_{1}du_{2}.
\end{align*}
By L'H\^{o}pital's rule, we have
\begin{align*}
&\lim_{N\to\infty}E[\zeta_{2m-2,1,N}^{2}]\\
&=\lim_{N\to\infty}\frac{2}{\log N}\int_{1}^{\infty}\int_{1}^{\infty}(u_{1}u_{2})^{-m-\frac{1}{2}}\int_{0}^{N}\mu(x,u_{1},u_{2})^{2m-2}dxdu_{1}du_{2}\\
&=\lim_{N\to\infty}2N\int_{1}^{\infty}\int_{1}^{\infty}(u_{1}u_{2})^{-m-\frac{1}{2}}\mu(N,u_{1},u_{2})^{2m-2}du_{1}du_{2}\\
&=2\int_{0}^{\infty}\int_{0}^{\infty}(u_{1}u_{2})^{-m-\frac{1}{2}}\mu(1,u_{1},u_{2})^{2m-2}du_{1}du_{2}\\
&=2\int_{1}^{\infty}\int_{u_{1}-1}^{\infty}\frac{(u_{1}-1)^{2m-2}}{u_{1}^{m+\frac{1}{2}}u_{2}^{m+\frac{1}{2}}}du_{2}du_{1}+2\int_{0}^{\infty}\int_{1+u_{2}}^{\infty}\frac{u_{2}^{2m-2}}{u_{1}^{m+\frac{1}{2}}u_{2}^{m+\frac{1}{2}}}du_{1}du_{2}\\
&=\frac{2}{m-\frac{1}{2}}\int_{1}^{\infty}\frac{(u_{1}-1)^{m-\frac{3}{2}}}{u_{1}^{m+\frac{1}{2}}}du_{1}+\frac{2}{m-\frac{1}{2}}\int_{0}^{\infty}\frac{u_{2}^{m-\frac{5}{2}}}{(1+u_{2})^{m-\frac{1}{2}}}du_{2}\\
&=\frac{2}{m-\frac{1}{2}}\left[\frac{1}{m-\frac{1}{2}}+\frac{1}{m-\frac{3}{2}}\right],
\end{align*}
where we have used the fact that $$\int\frac{(x-1)^{m-\frac{3}{2}}}{x^{m+\frac{1}{2}}}dx=\frac{1}{m-\frac{1}{2}}\Big(\frac{x-1}{x}\Big)^{m-\frac{1}{2}}+C,$$ and $$\int\frac{(x-1)^{m-\frac{5}{2}}}{x^{m-\frac{1}{2}}}dx=\frac{1}{m-\frac{3}{2}}\Big(\frac{x-1}{x}\Big)^{m-\frac{3}{2}}+C.$$
Thus the lemma follows. 	
\end{proof}

\begin{lemma}\label{lem1-6-16}
	Let $m\in\mathbb{N}$, $m\geq2$ be fixed. $\vartheta_{2m-2,1,N}(n-1,1)$ and $\vartheta_{2m-2,1,N}(n-1,2)$ are defined by
	\begin{align*}
	\vartheta_{2m-2,1,N}(n-1,1)=\frac{1}{\sqrt{2^{n-1}N\log N}}\int_{2^{n-1}}^{\infty}u^{-m-\frac{1}{2}}\int_{0}^{\frac{N}{2}}\prod_{i=1}^{2m-2}(B_{s+u}^{i}-B_{s}^{i})dsdu,
	\end{align*}
	and
	\begin{align*}
	\vartheta_{2m-2,1,N}(n-1,2)=\frac{1}{\sqrt{2^{n-1}N\log N}}\int_{2^{n-1}}^{\infty}u^{-m-\frac{1}{2}}\int_{\frac{N}{2}}^{N}\prod_{i=1}^{2m-2}(B_{s+u}^{i}-B_{s}^{i})dsdu.
	\end{align*}
	Then we have
    \begin{equation}\label{34}
    \begin{split}
    &\lim_{N\to\infty}E[\vartheta_{2m-2,1,N}^{2}(n-1,1)\vartheta_{2m-2,1,N}^{2}(n-1,2)]\\
    &=\lim_{N\to\infty}E[\vartheta_{2m-2,1,N}^{2}(n-1,1)]E[\vartheta_{2m-2,1,N}^{2}(n-1,2)],
    \end{split}
    \end{equation}
	\begin{align}\label{41}
	\lim_{N\to\infty}E[\vartheta_{2m-2,1,N}(n-1,1)\vartheta_{2m-2,1,N}^{3}(n-1,2)]=0,
	\end{align}
	and
	\begin{align}\label{42}
	\lim_{N\to\infty}E[\vartheta_{2m-2,1,N}^{3}(n-1,1)\vartheta_{2m-2,1,N}(n-1,2)]=0.
	\end{align}
\end{lemma}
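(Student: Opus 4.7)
The plan is to exploit an approximate independence between $\vartheta_{2m-2,1,N}(n-1,1)$ and $\vartheta_{2m-2,1,N}(n-1,2)$: the latter involves only increments $B^i_{s+u}-B^i_s$ with $s\in[N/2,N]$, hence depends only on Brownian motion posterior to time $N/2$, which is independent of $\mathcal{F}_{N/2}$. I would decompose
\[
\vartheta_{2m-2,1,N}(n-1,1)=\vartheta_N^a+\vartheta_N^b,
\]
where $\vartheta_N^a$ is the contribution from the subregion $\{(s,u):s+u\leq N/2\}$ and $\vartheta_N^b$ from its complement $\{s+u>N/2\}$. Then $\vartheta_N^a$ is $\mathcal{F}_{N/2}$-measurable and therefore independent of $\vartheta_{2m-2,1,N}(n-1,2)$.

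The technical heart is to show $\|\vartheta_N^b\|_{L^2}\to 0$ as $N\to\infty$. Computing $E[(\vartheta_N^b)^2]$ and substituting $y_i:=N/2-s_i$, one checks that the covariance kernel $K(s_1,s_1+u_1,s_2,s_2+u_2)$ equals $\min(u_1-y_1,u_2-y_2)+\min(y_1,y_2)$ on this region, which is bounded above by $\min(u_1,u_2)$. Hence $K^{2m-2}\leq(u_1u_2)^{m-1}$, so the product $u_1^{-m-1/2}u_2^{-m-1/2}K^{2m-2}$ collapses to $(u_1u_2)^{-3/2}$. Since
\[
\int_0^{N/2}\int_{(N/2-s)\vee 2^{n-1}}^{\infty}u^{-3/2}\,du\,ds=O(\sqrt N),
\]
one obtains $E[(\vartheta_N^b)^2]=O(1/\log N)\to 0$.

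Once this decay is in hand, the three claims follow routinely. For \eqref{34}, expand $E[(\vartheta_N^a+\vartheta_N^b)^2\vartheta_{2m-2,1,N}(n-1,2)^2]$: the dominant term $E[(\vartheta_N^a)^2\vartheta_{2m-2,1,N}(n-1,2)^2]$ factorizes by independence, and using $\|\vartheta_N^b\|_{L^2}\to 0$ together with \eqref{27} and \eqref{37} one identifies $\lim_N E[(\vartheta_N^a)^2]=\bar\sigma_{2m-2,1}^2/2^n$, which coincides with $\lim_N E[\vartheta_{2m-2,1,N}(n-1,2)^2]$. The remaining cross terms involving $\vartheta_N^b$ are controlled by Cauchy-Schwarz together with hypercontractivity of Wiener chaos, which furnishes uniform $L^p$ bounds since all the pieces lie in the chaos of order $2m-2$. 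For \eqref{41}, the expansion yields a main term $E[\vartheta_N^a\,\vartheta_{2m-2,1,N}(n-1,2)^3]=E[\vartheta_N^a]\cdot E[\vartheta_{2m-2,1,N}(n-1,2)^3]=0$, since $\vartheta_N^a$ is a multiple Wiener integral of order $2m-2\geq 2$ and therefore has zero mean; the remainder is bounded by $\|\vartheta_N^b\|_{L^2}\,\|\vartheta_{2m-2,1,N}(n-1,2)\|_{L^6}^3\to 0$. Identity \eqref{42} is treated identically, invoking $E[\vartheta_{2m-2,1,N}(n-1,2)]=0$ to kill the main term.

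The main obstacle is the second step: because the kernel $u^{-m-1/2}$ has only mild decay and the boundary region $s+u\approx N/2$ is large, the bound $\|\vartheta_N^b\|_{L^2}\to 0$ is delicate and relies crucially on the sharp estimate $K\leq\min(u_1,u_2)$, which collapses the exponent in $u$ to the just-integrable $-3/2$. All other parts of the argument are conceptually standard given this decay and the hypercontractivity of Wiener chaos.
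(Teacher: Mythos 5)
Your argument is correct, but it is genuinely different from the one in the paper. The paper proves \eqref{34} by brute-force Gaussian computation: since the $2m-2$ coordinate Brownian motions are independent, the fourth-moment integrand reduces to $\bigl(E[\prod_{l=1}^{4}(B_{s_l+u_l}-B_{s_l})]\bigr)^{2m-2}$ for a single one-dimensional Brownian motion; Isserlis' theorem writes the inner expectation as the sum of the three Wick pairings $\Lambda_1+\Lambda_2+\Lambda_3$, the pure $\Lambda_1^{2m-2}$ term reproduces $E[\vartheta^2(n-1,1)]E[\vartheta^2(n-1,2)]$, the pure $\Lambda_2^{2m-2}$ and $\Lambda_3^{2m-2}$ terms are identified with $\bigl(E[\vartheta(n-1,1)\vartheta(n-1,2)]\bigr)^2$, which vanishes by \eqref{28}, and every mixed term is dominated via the three-exponent H\"older inequality by geometric means of the pure ones. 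You instead make the asymptotic independence structural: you split $\vartheta_{2m-2,1,N}(n-1,1)$ into an $\mathcal{F}_{N/2}$-measurable part $\vartheta_N^a$ (exactly independent of $\vartheta_{2m-2,1,N}(n-1,2)$) and a boundary part $\vartheta_N^b$ supported on $\{s+u>N/2\}$, and you show $E[(\vartheta_N^b)^2]=O(1/\log N)$ by the bound $K\le\min(u_1,u_2)$, which collapses the kernel to $(u_1u_2)^{-3/2}$ and yields the $O(\sqrt N)$ estimate you state; this computation is sound and does not appear in the paper. The trade-offs: the paper's route stays entirely within elementary Gaussian moment identities and recycles \eqref{28}, needing no further input, but it disposes of \eqref{41} and \eqref{42} only with the phrase ``by the same arguments''; your route needs one extra standard tool (hypercontractivity, i.e.\ equivalence of $L^p$-norms on the fixed Wiener chaos of order $2m-2$, which is legitimate here because each $\prod_{i=1}^{2m-2}(B^i_{s+u}-B^i_s)$ is a multiple integral of order $2m-2$ with mutually orthogonal kernels), but in exchange it handles all three limits \eqref{34}, \eqref{41}, \eqref{42} uniformly from the single decay estimate on $\vartheta_N^b$, with the main terms killed outright by exact independence and the zero mean of nontrivial chaoses. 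Both proofs ultimately rest on the same quantitative fact -- the negligibility of the interaction across time $N/2$ -- encoded in the paper as \eqref{28} and in your write-up as $\|\vartheta_N^b\|_{L^2}\to0$.
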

\begin{proof}
	By the definition of $\vartheta_{2m-2,1,N}(n-1,1)$ and $\vartheta_{2m-2,1,N}(n-1,2)$, we have
	\begin{align*}
	&E[\vartheta_{2m-2,1,N}^{2}(n-1,1)\vartheta_{2m-2,1,N}^{2}(n-1,2)]\\
	&=\frac{1}{2^{2(n-1)}N^{2}(\log N)^{2}}E\left[\int_{[2^{n-1},\infty)^{4}\times [0,\frac{N}{2})^{2}\times [\frac{N}{2},N)^{2} }\prod_{j=1}^{4}u_{j}^{-m-\frac{1}{2}}\right.\\
&\left.\qquad\qquad\qquad\qquad\qquad\qquad\qquad\qquad\times\prod_{i=1}^{2m-2}(B_{s_{j}+u_{j}}^{i}-B_{s_{j}}^{i})ds_{j}du_{j}\right]\\
	&=\frac{1}{2^{2(n-1)}N^{2}(\log N)^{2}}\int_{[2^{n-1},\infty)^{4}\times [0,\frac{N}{2})^{2}\times [\frac{N}{2},N)^{2} }\left(E\left[\prod_{l=1}^{4}(B_{s_{l}+u_{l}}-B_{s_{l}})\right]\right)^{2m-2}\\
&\qquad\qquad\qquad\qquad\qquad\qquad\qquad\qquad\times\prod_{j=1}^{4}u_{j}^{-m-\frac{1}{2}}ds_{j}du_{j},
	\end{align*}
	where $s_{1},\ s_{2}\ \in [0,\frac{N}{2})$ and $s_{3},\ s_{4}\ \in [\frac{N}{2},N)$. Using Isserlis theorem, we deduce that
	\begin{align*}
	 &E\left[\prod_{l=1}^{4}(B_{s_{l}+u_{l}}-B_{s_{l}})\right]\\
  &=E[(B_{s_{1}+u_{1}}-B_{s_{1}})(B_{s_{2}+u_{2}}-B_{s_{2}})]E[(B_{s_{3}+u_{3}}-B_{s_{3}})(B_{s_{4}+u_{4}}-B_{s_{4}})]\\
	&\quad+E[(B_{s_{1}+u_{1}}-B_{s_{1}})(B_{s_{3}+u_{3}}-B_{s_{3}})]E[(B_{s_{2}+u_{2}}-B_{s_{2}})(B_{s_{4}+u_{4}}-B_{s_{4}})]\\
	&\quad+E[(B_{s_{1}+u_{1}}-B_{s_{1}})(B_{s_{4}+u_{4}}-B_{s_{4}})]E[(B_{s_{2}+u_{2}}-B_{s_{2}})(B_{s_{3}+u_{3}}-B_{s_{3}})]\\
	:&=\Lambda_{1}+\Lambda_{2}+\Lambda_{3}.
	\end{align*}
	Following from (\ref{28}), we have
	\begin{align*}
	&\frac{1}{2^{2(n-1)}N^{2}(\log N)^{2}}\int_{[2^{n-1},\infty)^{4}\times [0,\frac{N}{2})^{2}\times [\frac{N}{2},N)^{2} }\Lambda_{2}^{2m-2}\prod_{j=1}^{4}u_{j}^{-m-\frac{1}{2}}ds_{j}du_{j}\\
	&=\frac{1}{2^{2(n-1)}N^{2}(\log N)^{2}}\int_{[2^{n-1},\infty)^{4}\times [0,\frac{N}{2})^{2}\times [\frac{N}{2},N)^{2} }\Lambda_{3}^{2m-2}\prod_{j=1}^{4}u_{j}^{-m-\frac{1}{2}}ds_{j}du_{j}\\
	&=\Big(E[\vartheta_{2m-2,1,N}(n-1,1)\vartheta_{2m-2,1,N}(n-1,2)]\Big)^{2} \rightarrow 0, \ \  \text{as}\ \  N\to \infty.
	\end{align*}
	Moreover, by H\"{o}lder's inequality, for any nonnegative integers $p$, $q$ and $l$ such that $p+q+l=2m-2$, we have
	\begin{align*}
	&\frac{1}{2^{2(n-1)}N^{2}(\log N)^{2}}\int_{[2^{n-1},\infty)^{4}\times [0,\frac{N}{2})^{2}\times [\frac{N}{2},N)^{2} }\Lambda_{1}^{p}\Lambda_{2}^{q}\Lambda_{3}^{l}\prod_{j=1}^{4}u_{j}^{-m-\frac{1}{2}}ds_{j}du_{j}\\
	&\leq\frac{1}{2^{2(n-1)}N^{2}(\log N)^{2}}\left(\int_{[2^{n-1},\infty)^{4}\times [0,\frac{N}{2})^{2}\times [\frac{N}{2},N)^{2} }\Lambda_{1}^{2m-2}\prod_{j=1}^{4}u_{j}^{-m-\frac{1}{2}}ds_{j}du_{j}\right)^{\frac{p}{2m-2}}\\
    &\qquad\times\left(\int_{[2^{n-1},\infty)^{4}\times [0,\frac{N}{2})^{2}\times [\frac{N}{2},N)^{2} }\Lambda_{2}^{2m-2}\prod_{j=1}^{4}u_{j}^{-m-\frac{1}{2}}ds_{j}du_{j}\right)^{\frac{q}{2m-2}}\\
&\qquad\times\left(\int_{[2^{n-1},\infty)^{4}\times [0,\frac{N}{2})^{2}\times [\frac{N}{2},N)^{2} }\Lambda_{3}^{2m-2}\prod_{j=1}^{4}u_{j}^{-m-\frac{1}{2}}ds_{j}du_{j}\right)^{\frac{l}{2m-2}}\rightarrow 0, \ \ \text{as}\ \ N\to\infty.
	\end{align*}
	Hence,
	\begin{align*}
	&\lim_{N\to\infty}E[\vartheta_{2m-2,1,N}^{2}(n-1,1)\vartheta_{2m-2,1,N}^{2}(n-1,2)]\\
	&=\lim_{N\to\infty}\frac{1}{2^{2(n-1)}N^{2}(\log N)^{2}}\int_{[2^{n-1},\infty)^{4}\times [0,\frac{N}{2})^{2}\times [\frac{N}{2},N)^{2}}\Lambda_{1}^{2m-2}\prod_{j=1}^{4}u_{j}^{-m-\frac{1}{2}}ds_{j}du_{j}\\
	&=\lim_{N\to\infty}E[\vartheta_{2m-2,1,N}^{2}(n-1,1)]E[\vartheta_{2m-2,1,N}^{2}(n-1,2)].
	\end{align*}
	This completes the proof of (\ref{34}). By the same arguments, we obtain (\ref{41}) and (\ref{42}).
\end{proof}	

\begin{lemma}\label{le5}
	Let $d\geq 1$, $|k|\geq 1$, $d+|k|\geq 4$ be integers. $\gamma_{2m-|k|,N}(u)$ be defined by
	\begin{align}\label{04}
	\gamma_{2m-|k|,N}(u):=\frac{1}{\sqrt{N}}\int_{0}^{N}\prod_{i=1}^{2m-|k|}(B_{s+u}^{i}-B_{s}^{i})ds,
	\end{align}
	and $\gamma_{2m-|k|}(u)$ be a
	centered Gaussian process with covariance function $$2\int_{0}^{\infty}\mu(x,u_{1},u_{2})^{2m-|k|}dx,$$ where $\mu(x,u_{1},u_{2})$ is defined in (\ref{5-16}). Then we have
	\begin{align}\label{gamma-N-finite}
		\int_{0}^{\infty}(1+u)^{-m-\frac{d}{2}}\sup_{N}E[|\gamma_{2m-|k|,N}(u)|]du<\infty,
	\end{align}
	and
	\begin{align}\label{gamma-finite}
	\int_{0}^{\infty}(1+u)^{-m-\frac{d}{2}}E[|\gamma_{2m-|k|}(u)|]du<\infty,
	\end{align}
 where $m\in\mathbb{N}$ and $m>\frac{|k|}{2}$.
\end{lemma}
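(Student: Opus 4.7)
The plan is to reduce both statements to a single $u$-weighted integrability check by bounding the first absolute moment through the second moment via the Cauchy--Schwarz inequality (for $\gamma_{2m-|k|,N}(u)$) and by the Gaussian identity $E[|X|]=\sqrt{2/\pi}\,\sqrt{\mathrm{Var}(X)}$ (for $\gamma_{2m-|k|}(u)$). The key observation is that both variances, as functions of $u$, admit the \emph{same} power bound $Cu^{2m-|k|+1}$, so both \eqref{gamma-N-finite} and \eqref{gamma-finite} come down to checking finiteness of $\int_0^\infty (1+u)^{-m-\frac{d}{2}}u^{m-\frac{|k|}{2}+\frac{1}{2}}\,du$.

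First I would compute $E[\gamma_{2m-|k|,N}(u)^2]$. Using the independence of the coordinate Brownian motions and Fubini,
\[
E[\gamma_{2m-|k|,N}(u)^2]=\frac{1}{N}\int_0^N\!\int_0^N K(s_1,s_1+u,s_2,s_2+u)^{2m-|k|}\,ds_1ds_2,
\]
and with $u_1=u_2=u$ the expression \eqref{5-16} collapses to $\mu(x,u,u)=(u-x)_+$. After symmetrizing and substituting $x=|s_2-s_1|$, this becomes $\frac{2}{N}\int_0^N(N-x)(u-x)_+^{2m-|k|}\,dx$, which I bound by $\frac{2u^{2m-|k|+1}}{2m-|k|+1}$ uniformly in $N$ (simply drop the factor $(1-x/N)\in[0,1]$ and extend the integration to $x\in[0,u]$, noting the integrand vanishes past $u$). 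This gives $\sup_N E[|\gamma_{2m-|k|,N}(u)|]\leq C\,u^{m-\frac{|k|}{2}+\frac{1}{2}}$.

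For the Gaussian process $\gamma_{2m-|k|}(u)$, its variance at $u_1=u_2=u$ is $2\int_0^\infty(u-x)_+^{2m-|k|}\,dx=\frac{2u^{2m-|k|+1}}{2m-|k|+1}$, exactly the same expression. Hence $E[|\gamma_{2m-|k|}(u)|]\leq C\,u^{m-\frac{|k|}{2}+\frac{1}{2}}$ with the same constant up to the factor $\sqrt{2/\pi}$.

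It remains to verify that $\int_0^\infty(1+u)^{-m-\frac{d}{2}}u^{m-\frac{|k|}{2}+\frac{1}{2}}\,du<\infty$. Near zero the integrand behaves like $u^{m-\frac{|k|}{2}+\frac{1}{2}}$, which is integrable because $m>\frac{|k|}{2}$ makes the exponent strictly larger than $\frac{1}{2}$. Near infinity it behaves like $u^{\frac{1-d-|k|}{2}}$, which is integrable precisely when $d+|k|>3$; this is guaranteed by the standing hypothesis $d+|k|\geq 4$. There is no real obstacle here---the only subtle point is the uniform-in-$N$ second-moment bound, which works because the truncation $(1-x/N)$ only helps. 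Combining these ingredients yields both \eqref{gamma-N-finite} and \eqref{gamma-finite}.
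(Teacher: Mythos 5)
Your proposal is correct and follows essentially the same route as the paper: compute the second moment via the change of variable $x=s_2-s_1$, observe $\mu(x,u,u)=(u-x)_+$ so that the variance is bounded uniformly in $N$ by $\tfrac{2u^{2m-|k|+1}}{2m-|k|+1}$ (dropping the factor $1-x/N$), pass to the first moment by Cauchy--Schwarz, and check $\int_0^\infty(1+u)^{-m-\frac{d}{2}}u^{m-\frac{|k|}{2}+\frac12}\,du<\infty$ using $d+|k|\geq4$. The only cosmetic difference is your use of the exact Gaussian identity $E[|X|]=\sqrt{2/\pi}\,\sigma$ for the limit process where the paper simply reuses Cauchy--Schwarz; both are fine.
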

\begin{proof}
	By the definition of $\gamma_{2m-|k|,N}(u)$, the elementary calculations lead to
	\begin{equation}\label{07}
	\begin{split}
	E[\gamma_{2m-|k|,N}(u)^{2}]&=\frac{1}{N}\int_{0}^{N}\int_{0}^{N}K(s_{1},s_{1}+u,s_{2},s_{2}+u)^{2m-|k|}ds_{1}ds_{2}\\
	&=\frac{2}{N}\int_{0}^{N}\int_{0}^{s_{2}}\mu(s_{2}-s_{1},u,u)^{2m-|k|}ds_{1}ds_{2}\\
	&=2\int_{0}^{N}\mu(x,u,u)^{2m-|k|}\Big(1-\frac{x}{N}\Big)dx,
	\end{split}		
	\end{equation}
	where we take the change of variable $x=s_{2}-s_{1}$. Taking limit of $N$ in both sides, thus
	\begin{equation}\label{08}
 \begin{split}
	 \lim\limits_{N\rightarrow\infty}E[\gamma_{2m-|k|,N}(u)^{2}]
  &=2\int_{0}^{\infty}\mu(x,u,u)^{2m-|k|}dx\\
 &=\frac{2u^{2m-|k|+1}}{2m-|k|+1}=E[\gamma_{2m-|k|}(u)^{2}].
 \end{split}
	\end{equation}
	Using H\"{o}lder's inequality, we have
	\begin{align*}
	&\int_{0}^{\infty}(1+u)^{-m-\frac{d}{2}}\sup_{N}E[|\gamma_{2m-|k|,N}(u)|]du\\
 &\qquad\qquad\leq \int_{0}^{\infty}(1+u)^{-m-\frac{d}{2}}\left(\sup_{N}E[|\gamma_{2m-|k|,N}(u)|^{2}]\right)^{\frac{1}{2}}du.
	\end{align*}	
	It follows from (\ref{07}) and (\ref{08}) that
	\begin{align*}
		&\int_{0}^{\infty}(1+u)^{-m-\frac{d}{2}}\left(\sup_{N}E[|\gamma_{2m-|k|,N}(u)|^{2}]\right)^{\frac{1}{2}}du\\
  &\quad\leq \frac{\sqrt{2}}{\sqrt{2m-|k|+1}}\int_{0}^{\infty}(1+u)^{-m-\frac{d}{2}}u^{m-\frac{|k|-1}{2}}du\\
		&\quad\leq \frac{\sqrt{2}}{\sqrt{2m-|k|+1}}\int_{0}^{\infty}\frac{1}{(1+u)^{\frac{d+|k|-1}{2}}}du\\
		&\quad=\frac{2\sqrt{2}}{\sqrt{2m-|k|+1}(d+|k|-3)}<\infty.
	\end{align*}
	Similarly, we have (\ref{gamma-finite}).	
\end{proof}

\medskip

\end{document}